\documentclass[12pt]{amsart}%
\usepackage{amsmath,amsthm,amssymb,graphicx,eucal, amscd, ifthen}
\usepackage{dsfont}
\usepackage[margin=1.0in]{geometry}%.25

 \usepackage[colorlinks=true,hypertex]{hyperref}%%%%%%%%%%%%%%%%%%%%%
 \hypersetup{urlcolor=blue, citecolor=red}%%%%%%%%%%%%%%%%%%%%%

\numberwithin{equation}{section}
\numberwithin{figure}{section}

 \newtheorem{theorem}{Theorem}[section]
 \newtheorem{definition}[theorem]{Definition}

 \newtheorem{proposition}[theorem]{Proposition}
 \newtheorem{lemma}[theorem]{Lemma}
 \newtheorem{corollary}[theorem]{Corollary}
 \newtheorem{assumption}[theorem]{Assumption}
 \newtheorem{remark}[theorem]{Remark}

\def\Sig{Sierpi\'nski gasket}

\def\arti{and references therein}
\def\pcf{\mbox{p.c.f.}}

\def\Dom{\text{\rm Dom\,}}
\def\Df{Dirichlet form}
\def\iFF{{if and only if }}

\def\pcf{{p.c.f.}}

\def\s-s{{self-similar}}
\def\fr{finitely ramified}

\def\fr{fi\-ni\-te\-ly ram\-i\-fied}

\def\frcs{\fr\ cell struct\-ure}
\def\Tr{\text{\rm Tr\,}}
\def\Trace{\text{\rm Trace}\vph_}\def\vph{^{\vphantom{{\ }_{Ap}}}}

\def\sA{\mathcal A}

 \def\sF{\mathcal F}
 \def\E{\mathcal E}
 
 \def\sH{\mathcal H}
 \def\sJ{\mathcal{J}}
 %MI

 \def\a{\alpha}

\newcommand{\DF}{\mathcal{E}}
\newcommand{\domDF}{\mathcal{F}}
\newcommand{\Alg}{\mathcal{B}}
\newcommand{\Hil}{\mathcal{H}}
\newcommand{\normal}{d_{n}}

\DeclareMathOperator{\dom}{dom}%
\DeclareMathOperator{\Osc}{Osc}%
\DeclareMathOperator{\Cl}{Cl}
\DeclareMathOperator{\trace}{Trace}

\begin{document}

\author{Marius Ionescu}
   \email{mionescu@colgate.edu}
\address{Department of Mathematics, Colgate University, Hamilton, NY, 13346}

\author{Luke G. Rogers}
   \email{rogers@math.uconn.edu}

\author{Alexander Teplyaev}
 \email{teplyaev@math.uconn.edu}
  \address{Department of Mathematics\\
University of Connecticut\\
Storrs, CT 06269-3009%\\U.S.A.
}
      \thanks{Research supported in part by the National Science Foundation, grant DMS-0505622.}

   \title{Derivations and Dirichlet forms on fractals}

\begin{abstract}
We study derivations and Fredholm modules on metric spaces with a local regular conservative Dirichlet form. In particular, on finitely ramified fractals, we show that there is a non-trivial Fredholm module if and only if the fractal is not a tree (i.e. not simply connected). This result relates Fredholm modules and topology, and refines and improves  known results on p.c.f. fractals. We also discuss weakly summable Fredholm modules and the Dixmier trace in the cases of some finitely  and infinitely ramified fractals (including non-self-similar fractals) if the so-called spectral dimension is less than 2. In the finitely ramified self-similar case we relate the p-summability question with estimates of the Lyapunov exponents for harmonic functions and the behavior of the
pressure function.  \tableofcontents
   \end{abstract}

 \subjclass{28A80, 58J42, 46L87, 31C25,  34B45,  60J45, 94C99}
 \keywords{Fredholm module, derivation, metric space,  Dirichlet form, finitely ramified fractal.}

 \date{\today}

 \maketitle

\section{Introduction}
The classical example of a Dirichlet form is $\DF(u,u)=\int|\nabla u|^{2}$ with domain the Sobolev space of functions with one derivative in $L^2$.  In~\cite{CS03jfa} Cipriani and Sauvageot show that any sufficiently well-behaved Dirichlet form on a $C^{\ast}$-algebra has an analogous form, in that there is a map analogous to the gradient and such that the energy is the $L^2$ norm of the image of this map.  Specifically this map, $\partial$, is a derivation (i.e. has the Leibniz property) from the domain of the Dirichlet form to a Hilbert module $\sH$, such that $\|\partial a\|_{\sH}^{2}=\DF(a,a)$.  In the case that the Dirichlet form is local regular on a separable locally compact metric measure space, this construction is a variant of the energy measure construction of LeJan~\cite{LeJan}.  In particular, understanding the module $\sH$  essentially relies on understanding energy measures.

It is now well-known that fractal sets provide interesting examples of Dirichlet forms with properties different from those found on Euclidean spaces.  Cipriani and Sauvageot study their derivation in the p.c.f.\ fractal setting in~\cite{CS}, obtaining properties of a Fredholm module (an abstract version of an order zero elliptic pseudodifferential operator in the sense of Atiyah~\cite{Atiyah}) from known results on the heat kernel of the diffusion corresponding to $\DF$ and the counting function of the associated Laplacian spectrum (see also \cite{CGIS} for related results, and \cite{CIL} for a different approach).  These results open up an exciting connection between Dirichlet forms on fractals and the non-commutative geometry of Connes~\cite{Connesbook}, so it is natural to ask for an explicit description of the key elements of this connection, namely the Hilbert module $\sH$ and its associated Fredholm module.

In this paper we give a concrete description of the elements of the Hilbert module of Cipriani and Sauvageot in the setting of Kigami's resistance forms on finitely ramified fractals, a class which includes the \pcf\ fractals studied in~\cite{CS} and many other interesting examples~\cite{ADT, T08cjm,RoTe}.  We give a direct sum decomposition of this module into piecewise harmonic components that correspond to the cellular structure of the fractal. This decomposition further separates the image of the map $\partial$ from its orthogonal complement and thereby gives an analogue of the Hodge decomposition for $\sH$ (Theorem~\ref{hodge}).  By employing this decomposition to analyze the Fredholm module from~\cite{CS} we give simpler proofs of the main results from that paper and further prove that summability of the Fredholm module is possible below the spectral dimension.  We also clarify the connection between the topology and the Fredholm module by showing that there is a non-trivial Fredholm module if and only if the fractal is not a tree (i.e. not simply connected); this corrects Proposition~4.2 of~\cite{CS}. 

Besides the reference~\cite{CS}, which we use extensively, previous  relevant papers of  Cipriani and Sauvageot are \cite{CS02,CS03gfa}. Concerning the relation between Dirichlet forms and operator algebras, our work was also influenced by \cite{Ko1,Ko2,Si05}.  In our paper we use the classical theory of local Dirichlet forms, see \cite{BouleauHirsch, FOT, RocknerMa}. In particular we emphasize the importance of the local property, which in our context means that the left and right multiplications coincide, see Theorem~\ref{thm-commutativityofmult} and Remark~\ref{rem-commutativityofmult}  (for another interesting dichotomy between local and non-local Dirichlet forms see \cite{GK08}). 

Our paper is a part of a series of works that are aimed at developing   comprehensive vector analysis and differential geometry on fractals using Dirichlet form theory. For earlier approaches to vector analysis on fractals, see \cite{Ki93h,Ki08,Ku93,St00,T08cjm}. 
For some related results obtained independently from and at the same time as our work see~\cite{Hinz}, and for further developments see 
\cite{HRT,HT1,HT2}. 

As this work is directed at researchers from two areas which have not previously had much interaction we have included a short summary of how the main results are connected to one another and to the known theory. 

\section{Background}\label{background}

\subsection*{Dirichlet forms and derivations}

Let $(X,\Omega,\mu)$ denote a $\sigma$-finite measure space $X$ with $\sigma$-algebra $\Omega$ and positive measure~$\mu$.  The real Hilbert space $L^{2}(X,\Omega,\mu)$ is written $L^{2}(\mu)$, its norm is $\|\cdot\|_{2}$ and its inner product is $\langle \cdot,\cdot\rangle_{2}$ or sometimes $\langle \cdot,\cdot\rangle $ if no confusion can arise.
A Dirichlet form $\DF$ on $X$ is a non-negative, closed, symmetric, quadratic form with with dense domain $\domDF\subset L^{2}(\mu)$, and such that $\DF$ is sub-Markovian.  One formulation of the sub-Markovian property is that $a\in\domDF$ implies that $\tilde{a}(x)=\min\{1,\max\{a(x),0\}\}$ defines a function $\tilde{a}\in\domDF$ with $\DF(\tilde{a})\leq\DF(a)$.  Another is that if $F:\mathbb{R}^{n}\to\mathbb{R}$ is a normal contraction, meaning $F(0)=0$ and
\begin{equation*}
    \bigl| F(x_{1},\dotsc,x_{n})- F(y_{1},\dotsc,y_{n})\bigr|
    \leq \sum_{j=1}^{n} |x_{j}-y_{j}|
    \end{equation*}
then $F\circ(a_{1},\dotsc,a_{n})\in\domDF$ for any $a_{1},\dotsc,a_{n}\in\domDF$ and $\DF\bigl(F\circ(a_{1},\dotsc,a_{n})\bigr)^{1/2}\leq \sum_{j=1}^{n}\DF(a_{j})^{1/2}$.  We write $\DF(u,v)$ for the bilinear form obtained from $\DF$ by polarization.

In this paper $X$ is a specific type of compact metric space (described later) and $\Omega$ is the Borel sigma algebra.  The forms we consider are conservative, meaning that $\DF(a,b)=0$ whenever $a$ or $b$ is constant, and strong local, which means that $\DF(a,b)=0$ whenever $a$ is constant on a neighborhood of the support of $b$ (or vice-versa).  Provided $\mu$ is finite, non-atomic and non-zero on non-empty open sets the form is also regular, meaning that $\domDF\cap C(X)$ contains a subspace dense in $C(X)$ with respect to the supremum norm and in $\domDF$ with respect to the norm $\bigl(\DF(a,a)+\langle a,a\rangle\bigr)^{1/2}$.

Since the classical example of a Dirichlet form is $\DF(a)=\int|\nabla a|^{2}$, it is natural to ask whether a general Dirichlet form can be realized in a similar manner.  That is, one may ask whether there is a Hilbert space $\Hil$ and a map $\partial:\domDF\to\Hil$ such that $\partial$ satisfies the Leibniz rule $\partial(ab)=a(\partial b)+(\partial a)b$ and also $\DF(u)=\|\partial u\|_{\Hil}$.  Note that for this to be the case $\Hil$ must be a module over $\domDF$. We begin with a standard result and a definition that makes the above question precise.

\begin{lemma}[\protect{Corollary 3.3.2 of~\cite{BouleauHirsch}}]\ \\
$\Alg=\domDF\cap L^{\infty}(\mu)$ is an algebra and $\DF(uv)^{1/2}\leq \|u\|_{\infty}\DF(v)^{1/2}+\|v\|_{\infty}\DF(u)^{1/2}$.
\end{lemma}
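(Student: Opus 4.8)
The statement has two parts: that $\Alg = \domDF \cap L^\infty(\mu)$ is an algebra, and the seminorm inequality $\DF(uv)^{1/2} \leq \|u\|_\infty \DF(v)^{1/2} + \|v\|_\infty \DF(u)^{1/2}$. The key point is that the product $uv$ of two bounded energy-finite functions lies in $\domDF$, and the quantitative estimate. Let me sketch how I would prove it.

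Let me think about what tools are available. We have the sub-Markovian property in the "normal contraction" form: if $F:\mathbb{R}^n \to \mathbb{R}$ is a normal contraction with $F(0)=0$, then $F\circ(a_1,\dots,a_n) \in \domDF$ and $\DF(F\circ(a_1,\dots,a_n))^{1/2} \leq \sum_j \DF(a_j)^{1/2}$. Also, the form is closed, so $\domDF$ is complete under the norm $\|a\|_\DF = (\DF(a,a) + \|a\|_2^2)^{1/2}$. And polarization gives the bilinear form.

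The obstacle: multiplication $(x,y) \mapsto xy$ is NOT a normal contraction on all of $\mathbb{R}^2$ — it's not even Lipschitz. So we can't directly apply the sub-Markovian property. The trick is to truncate: first assume $u, v$ are bounded by some constant $M$, and consider the function $F_M(x,y) = (x \wedge M \vee (-M))(y \wedge M \vee (-M))$ restricted appropriately, or better, work with the function $(x,y) \mapsto xy$ on the box $[-M,M]^2$, which IS Lipschitz there with a controlled constant, and extend it to a normal contraction... but that doesn't quite give the sharp constant.

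Here's the cleaner standard route I would take.

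First reduce to the case of bounded functions that also lie in $\domDF$ — which is exactly what $\Alg$ is. Take $u,v \in \Alg$. By normalizing, one would like to consider $u/\|u\|_\infty$ and $v/\|v\|_\infty$, but the inequality isn't homogeneous in a way that makes this trivial; still, scaling shows it suffices to treat the case $\|u\|_\infty, \|v\|_\infty \leq 1$ and prove $\DF(uv)^{1/2} \leq \DF(u)^{1/2} + \DF(v)^{1/2}$, PROVIDED one is careful — actually, let me not over-optimize and instead use the polarization/energy-measure approach, which is how Bouleau–Hirsch actually do it.

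**Approach via the intrinsic energy measure / carré du champ.** Since $\DF$ is a Dirichlet form, for $a \in \Alg$ one can define the energy measure $\Gamma(a)$ (a positive Radon measure, sometimes written $\mu_{\langle a\rangle}$) by the formula $\int \phi \, d\Gamma(a) = 2\DF(a, a\phi) - \DF(a^2, \phi)$ for $\phi \in \Alg$, and $\DF(a,a) = \Gamma(a)(X)$. More generally polarize to get a signed measure $\Gamma(a,b)$ with $\DF(a,b) = \Gamma(a,b)(X)$, and $\Gamma$ satisfies the Cauchy–Schwarz inequality $|\Gamma(a,b)| \leq \Gamma(a)^{1/2}\Gamma(b)^{1/2}$ as measures (equivalently, $\Gamma(\cdot,\cdot)$ is a nonnegative-definite measure-valued bilinear form). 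The crucial algebraic fact is the Leibniz rule for energy measures: $\Gamma(uv) $ corresponds, via $d\Gamma(uv) = \big(u \, d\Gamma(v) + v\, d\Gamma(u)\big)$-type identity, more precisely $\Gamma(uv, w) = u\,\Gamma(v,w) + v\,\Gamma(u,w)$ holds as measures for $u,v,w \in \Alg$ — this is the measure-level Leibniz rule, itself proved from the sub-Markovian property applied to the smooth function $(x,y,z)\mapsto$ suitable truncations, together with a density/approximation argument.

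**The key steps, in order.**

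Step 1. For $u, v \in \Alg$, first show $uv \in L^\infty$ trivially ($\|uv\|_\infty \leq \|u\|_\infty \|v\|_\infty$), so the only content is $uv \in \domDF$ together with the energy estimate; the algebra closure under sums is immediate since $\domDF$ is a vector space and $L^\infty$ is too.

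Step 2. Establish that for $u,v \in \Alg$ and $M = \max(\|u\|_\infty, \|v\|_\infty)$, the function $uv$ is approximated in $\domDF$ by $F_n \circ (u,v)$ where $F_n$ are uniformly Lipschitz modifications of multiplication on $[-M,M]^2$; more efficiently, use that $xy = \frac14\big((x+y)^2 - (x-y)^2\big)$ and reduce to showing $a^2 \in \domDF$ with $\DF(a^2)^{1/2} \leq 2\|a\|_\infty \DF(a)^{1/2}$ for $a \in \Alg$. This last is the heart: apply the sub-Markovian property to the normal contraction $t \mapsto \frac{1}{2\|a\|_\infty}\big((t \wedge \|a\|_\infty) \vee (-\|a\|_\infty)\big)^2$ — wait, one must check this is genuinely a normal contraction (it is, since its derivative has absolute value $\leq 1$ on the relevant range and it's extended constantly outside), evaluated at $a$, which since $|a| \leq \|a\|_\infty$ a.e. equals $\frac{1}{2\|a\|_\infty} a^2$; hence $a^2/(2\|a\|_\infty) \in \domDF$ with energy $\leq \DF(a)^{1/2}$, i.e. $\DF(a^2)^{1/2} \leq 2\|a\|_\infty \DF(a)^{1/2}$.

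Step 3. Combine via the polarization identity $4uv = (u+v)^2 - (u-v)^2$. Then $4 \DF(uv)^{1/2} \leq \DF((u+v)^2)^{1/2} + \DF((u-v)^2)^{1/2} \leq 2\|u+v\|_\infty \DF(u+v)^{1/2} + 2\|u-v\|_\infty \DF(u-v)^{1/2}$. Bounding $\|u\pm v\|_\infty \leq \|u\|_\infty + \|v\|_\infty$ and $\DF(u\pm v)^{1/2} \leq \DF(u)^{1/2} + \DF(v)^{1/2}$ by the triangle inequality for the seminorm, one gets $4\DF(uv)^{1/2} \leq 4(\|u\|_\infty + \|v\|_\infty)(\DF(u)^{1/2} + \DF(v)^{1/2})$, which is off from the claimed sharp inequality by a factor. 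To recover the sharp constant one instead argues more carefully: replace $u, v$ by $su, s^{-1}v$ for $s>0$ (which leaves $uv$ fixed), so that $\DF(uv)^{1/2} \leq (\|su\|_\infty + \|s^{-1}v\|_\infty)(\DF(su)^{1/2} + \DF(s^{-1}v)^{1/2})\cdot$const, then optimize over $s$ — actually the clean way is: from Step 2, $\DF(a^2)^{1/2}\le 2\|a\|_\infty\DF(a)^{1/2}$ together with the measure-level Leibniz rule $d\Gamma(uv) = u^2 d\Gamma(v) + 2uv\, d\Gamma(u,v) + v^2 d\Gamma(u)$ gives, by the triangle inequality in $L^2(d(\Gamma(u)+\Gamma(v)+|\Gamma(u,v)|))$ and $|\Gamma(u,v)|\le \Gamma(u)^{1/2}\Gamma(v)^{1/2}$,
\[
\DF(uv)^{1/2} = \Big(\int d\Gamma(uv)\Big)^{1/2} \le \Big(\int u^2 d\Gamma(v)\Big)^{1/2} + \Big(\int v^2 d\Gamma(u)\Big)^{1/2} \le \|u\|_\infty \DF(v)^{1/2} + \|v\|_\infty \DF(u)^{1/2},
\]
which is exactly the claim. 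So the real work is establishing the energy-measure calculus (existence of $\Gamma$, Cauchy–Schwarz, Leibniz rule) for $\Alg$.

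**Main obstacle.** The genuine difficulty is justifying the Leibniz rule at the level of energy measures, $\Gamma(uv) = u^2\Gamma(v) + 2uv\,\Gamma(u,v) + v^2\Gamma(u)$, for arbitrary $u,v \in \Alg$ — this requires first constructing $\Gamma(a)$ as a measure (via the positive functional $\phi \mapsto 2\DF(a,a\phi) - \DF(a^2,\phi)$, using Step 2 to know $a^2, a\phi \in \domDF$), proving the chain rule $\Gamma(F(a)) = F'(a)^2 \Gamma(a)$ for $C^1$ (or Lipschitz) $F$ by approximating $F$ by polynomials/smooth functions and passing to the limit using closedness of $\DF$, and then deriving the Leibniz rule as the bilinear/polarized form of the chain rule applied to $(x,y)\mapsto xy$. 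Each passage to the limit uses the uniform estimates from Step 2 plus the sub-Markovian inequality, and one must be careful that approximation in $\domDF$-norm implies convergence of the corresponding energy measures in the appropriate (e.g. vague, or total-variation) sense. Since the paper cites this as "Corollary 3.3.2 of Bouleau–Hirsch," in practice the proof would simply invoke that reference; but the self-contained argument proceeds exactly along the lines above, with the energy-measure Leibniz rule being the one nontrivial ingredient and Step 2 (the bound $\DF(a^2)^{1/2} \le 2\|a\|_\infty \DF(a)^{1/2}$ from a carefully chosen normal contraction) being the elementary engine that powers everything.
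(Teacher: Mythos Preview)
The paper does not give its own proof of this lemma; it simply cites Corollary~3.3.2 of Bouleau--Hirsch as an external fact. So there is no in-paper argument to compare against, only the question of whether your sketch is a sound reconstruction.

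Your energy-measure route is correct in outline but substantially over-engineered, and you dismissed the direct argument too quickly. You wrote that working with $(x,y)\mapsto xy$ on the box $[-M,M]^2$ and extending to a normal contraction ``doesn't quite give the sharp constant'' --- but it does. Set $a=\|u\|_\infty$, $b=\|v\|_\infty$, let $\Phi(x,y)=xy$ on $[-a,a]\times[-b,b]$ extended by truncation to $\mathbb{R}^2$, and note $|\Phi(x,y)-\Phi(x',y')|\le b|x-x'|+a|y-y'|$. Then $G(s,t)=\Phi(s/b,t/a)$ is a genuine normal contraction of two variables with $G(0,0)=0$, so the sub-Markovian property gives
\[
\DF(uv)^{1/2}=\DF\bigl(G(bu,av)\bigr)^{1/2}\le \DF(bu)^{1/2}+\DF(av)^{1/2}=\|v\|_\infty\DF(u)^{1/2}+\|u\|_\infty\DF(v)^{1/2},
\]
which is exactly the sharp inequality. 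This is the one-line proof and is essentially how Bouleau--Hirsch do it.

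Your alternative via $d\Gamma(uv)=u^2\,d\Gamma(v)+2uv\,d\Gamma(u,v)+v^2\,d\Gamma(u)$ followed by Cauchy--Schwarz is valid, and you correctly identified that the algebra property (your Step~2) must be in hand before one can even define $\Gamma$, so there is no circularity. But developing the full carr\'e du champ calculus (existence of $\Gamma$, chain rule, Leibniz rule) just to prove this lemma inverts the logical order: in the standard references the algebra property and the product inequality come \emph{first}, via the direct normal-contraction argument above, and only then is the energy-measure machinery built on top of them.
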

\begin{definition}
A Hilbert space $\Hil$ is a bimodule over $\Alg$ if there are commuting left and right actions of $\Alg$ as bounded linear operators on $\Hil$.  If $\Hil$ is such a bimodule, then a {\em derivation} $\partial:\Alg\to\Hil$ is a map with the Leibniz property $\partial(ab)=(\partial a)b + a(\partial b)$.
\end{definition}

The above question now asks whether given a Dirichlet form $\DF$ there is a Hilbert module $\Hil$ and a derivation $\partial$ so that $\|\partial a\|_{\Hil}^{2}=\DF(a)$.  In~\cite{CS03jfa} Cipriani and Sauvageot resolve this in the affirmative for regular Dirichlet forms on (possibly non-commutative) $\text{C}^{\ast}$-algebras by introducing an algebraic construction of a Hilbert module $\Hil$ and an associated derivation.  In the case that the $\text{C}^{\ast}$-algebra is commutative this gives an alternative approach to a result of LeJan~\cite{LeJan} about energy measures.  We will use both the LeJan and Cipriani-Sauvageot descriptions.

From the results of LeJan~\cite{LeJan} for any regular strong local Dirichlet form $(\DF,\domDF)$ and $a,b,c\in\Alg$ there is a (finite, signed Borel) measure $\nu_{a,b}$ such that if $f\in\domDF\cap C(X)$
\begin{align}
    \int f\, d\nu_{a,b} &= \frac{1}{2}\Bigl(\DF(af,b)+\DF(bf,a) - \DF(ab,f)\Bigr)\label{defn-energymeasure}\\
    d\nu_{ab,c} &= a\, d\nu_{b,c} + b\, d\nu_{a,c} \label{eqn-leibnizformsrs}
    \end{align}
where the latter encodes the Leibniz rule (see Lemma~3.2.5 and Theorem~3.2.2 of~\cite{FOT}).  The measure $\nu_{a}=\nu_{a,a}$ is positive and is called the {\em energy measure} of $a$.  Note that in the classical theory $d\nu_{a,b}=\nabla a\cdot\nabla b \,dm$ where $dm$ is the Lebesgue measure.

The energy measures do not directly produce a Hilbert module and derivation as specified above, however the connection will soon be apparent.  We consider the Cipriani-Sauvageot construction from~\cite{CS03jfa}, an intuitive description of which is that $\Hil$ should contain all elements $(\partial a)b$ for $a,b\in\Alg$, and should have two multiplication rules, one corresponding to $\partial(aa')b$ using the Leibniz rule and the other to $(\partial a)bb'$.  A well-known technique suggests beginning with a tensor product of two copies of $\Alg$, one incorporating the Leibniz multiplication and the other having the usual multiplication in $\Alg$, and making the correspondence $(\partial a)b=a\otimes b$. Cipriani and Sauvageot then obtain a pre-Hilbert structure  by analogy with the classical case where the derivation is $\nabla$ and $\DF(a)=\int|\nabla a|^{2}$.  The problem of defining $\|a\otimes b\|_{\Hil}^{2}$ is then analogous to that of obtaining $\int |b|^{2}|\nabla a|^{2}$ using only $\DF$, which is essentially the same problem considered by LeJan and leads to $2\|a\otimes b\|_{\Hil}^{2}=2\DF(a|b|^{2},a)-\DF(|a|^{2},|b|^{2})$. The corresponding inner product formula is precisely~\eqref{defn.H0norm} below, and should be compared to the right side of~\eqref{defn-energymeasure}.

\begin{definition}\label{def-sH}
On $\Alg\otimes\Alg$ let
\begin{equation}\label{defn.H0norm}
    \langle a\otimes b, c\otimes d\rangle_{\Hil}
    = \frac{1}{2}\Bigl( \DF(a,cdb)+ \DF(abd,c)- \DF(bd,ac)\Bigr).
    \end{equation}
and $\|\cdot\|_{\Hil}$ the associated semi-norm. The space $\Hil$ is obtained by taking the quotient by the norm-zero subspace and then the completion in $\|\cdot\|_{\Hil}$.  Left and right actions of $\Alg$ on $\Hil$ are defined on $\Alg\otimes\Alg$ by linearity from
\begin{gather*}
    a(b\otimes c)
    = (ab)\otimes c - a\otimes (bc)\\
    (b\otimes c) d
    = b\otimes (cd)
    \end{gather*}
for $a,b, c,d\in \Alg$, and extended to $\Hil$ by continuity and density.  This makes $\Hil$ an $\Alg$ bimodule, and it can be made an $L^\infty$ bimodule by taking $\text{weak}^{*}$ limits of these actions.
Completeness of $\Hil$ allows the definition of $a\otimes b$ for any $a\in\Alg$ and $b\in L^{\infty}$, the derivation $\partial_{0}:\Alg\to\Hil$ is defined by
\begin{equation*}
    \partial_{0} a = a\otimes \mathds{1}_{X}.
    \end{equation*}
and we have $(\partial_{0}a)b = a\otimes b$ for all $a,b\in\Alg$.
\end{definition}

Of course the validity of the above definition depends on verifying several assertions.  Bilinearity of~\eqref{defn.H0norm}  is immediate but one must check it is positive definite on finite linear combinations $\sum_{j=1}^{n} a_{j}\otimes b_{j}$, see~\eqref{positivityofmatrix} below.  One must also verify that the module actions of $\Alg$ are bounded operators on $\Alg\otimes\Alg$ so can be continuously extended to $\Hil$, and that the resulting maps of $\Alg$ into the bounded operators on $\Hil$ are themselves bounded, so that the module actions can be extended to actions of $L^{\infty}$, see~\eqref{boundednessofrightaction} and~\eqref{boundednessofleftaction}.  A simple computation then shows the module actions commute, and we also note that boundedness of the module actions implies they are well-defined on the quotient by the zero-norm subspace.  Finally we must define $a\otimes b$ for $a\in\Alg$, $b\in L^{\infty}$.  This follows in our setting from~\eqref{boundednessbyenergy} and compactness of $X$.

The following theorem, which is an amalgam of the results in Section~3 of~\cite{CS03jfa} verifies that all of these are possible.  In our setting its proof is a quick consequence of writing
\begin{equation}\label{eqn-hilbertmoduleviaLeJan}
    \langle a\otimes b, c\otimes d\rangle_{\Hil}= \int b(x)d(x)\, d\nu_{a,c}
\end{equation}
using the energy measure of LeJan.
\begin{theorem}[\protect{\cite{CS03jfa}}]
For any finite set $\{a_{j}\otimes b_{j}\}_{1}^{n}$ in $\Alg\otimes\Alg$ and $c\in L^{\infty}(\mu)$
\begin{gather}
    \langle \sum_{j=1}^{n} a_{j}\otimes b_{j}, \sum_{j=1}^{n} a_{j}\otimes b_{j}\rangle_{\Hil} \geq 0 \label{positivityofmatrix}\\
    \Bigl\| \sum_{j=1}^{n} \bigl( a_{j}\otimes b_{j}\bigr) c \Bigr\|_{\Hil}
    \leq \|c\|_{\infty} \Bigl\| \sum_{j=1}^{n} a_{j}\otimes b_{j} \Bigr\|_{\Hil} \label{boundednessofrightaction}\\
    \Bigl\| \sum_{j=1}^{n} c\bigl( a_{j}\otimes b_{j}\bigr) \Bigr\|_{\Hil}
    \leq \|c\|_{\infty} \Bigl\| \sum_{j=1}^{n} a_{j}\otimes b_{j} \Bigr\|_{\Hil} \label{boundednessofleftaction}\\
    \bigl\| a\otimes b \bigr\|_{\Hil}
    \leq \|b\|_{\infty} \DF(a)^{1/2}. \label{boundednessbyenergy}
    \end{gather}
\end{theorem}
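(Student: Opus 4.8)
The plan is to reduce everything to the single identity~\eqref{eqn-hilbertmoduleviaLeJan}, namely $\langle a\otimes b,c\otimes d\rangle_{\Hil}=\int b\,d\,d\nu_{a,c}$, and then let the known properties of the energy measures $\nu_{a,c}$ do the work. First I would establish~\eqref{eqn-hilbertmoduleviaLeJan} itself: unwinding the right-hand side of~\eqref{defn.H0norm} and comparing with~\eqref{defn-energymeasure}, one sees that $\tfrac12\bigl(\DF(a,cdb)+\DF(abd,c)-\DF(bd,ac)\bigr)=\int bd\,d\nu_{a,c}$ follows from~\eqref{defn-energymeasure} with $f=bd$ together with the Leibniz rule~\eqref{eqn-leibnizformsrs} for $\nu$; this uses that $\Alg$ is an algebra (the cited Corollary~3.3.2) so that all the products appearing are legitimate elements of $\domDF\cap L^\infty$, and that $bd\in\domDF\cap C(X)$ in the regular setting where $X$ is compact. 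Once this is in hand, bilinearity and symmetry of the left side are manifest from the integral representation.

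For~\eqref{positivityofmatrix}, I would expand $\langle\sum_j a_j\otimes b_j,\sum_k a_k\otimes b_k\rangle_{\Hil}=\sum_{j,k}\int b_jb_k\,d\nu_{a_j,a_k}$ and recognize the integrand as a quadratic form in the vector $(b_1,\dots,b_n)$ with matrix of measures $(d\nu_{a_j,a_k})_{j,k}$. The point is that this matrix is pointwise nonnegative-definite: for any real vector $\xi$, $\sum_{j,k}\xi_j\xi_k\,d\nu_{a_j,a_k}=d\nu_{\sum_j\xi_j a_j}\geq0$ since each $\nu_{a}$ is a positive measure, and by a density/approximation argument (approximating the $b_j$ by simple functions, or invoking the standard fact that a matrix-valued measure that is nonnegative on constant test vectors is nonnegative on measurable ones) the integral $\sum_{j,k}\int b_jb_k\,d\nu_{a_j,a_k}$ is $\geq0$. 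This is essentially the Cauchy–Schwarz/positivity mechanism for vector-valued measures and is where a small amount of care is needed.

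The bounds~\eqref{boundednessofrightaction} and~\eqref{boundednessbyenergy} are then immediate from~\eqref{eqn-hilbertmoduleviaLeJan}: since $\bigl(\sum_j a_j\otimes b_j\bigr)c=\sum_j a_j\otimes(b_jc)$ by the definition of the right action, its squared norm is $\sum_{j,k}\int b_jb_k|c|^2\,d\nu_{a_j,a_k}\leq\|c\|_\infty^2\sum_{j,k}\int b_jb_k\,d\nu_{a_j,a_k}$, again using pointwise nonnegative-definiteness of the matrix of measures to dominate the integrand; and $\|a\otimes b\|_{\Hil}^2=\int|b|^2\,d\nu_a\leq\|b\|_\infty^2\,\nu_a(X)=\|b\|_\infty^2\DF(a)$, where $\nu_a(X)=\DF(a)$ follows from~\eqref{defn-energymeasure} with $f\equiv\mathds{1}_X$ (legitimate since $X$ is compact so $\mathds 1_X\in\domDF\cap C(X)$) and conservativeness of $\DF$.

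The one genuinely less routine estimate is the left-action bound~\eqref{boundednessofleftaction}, because the left action $a(b\otimes c)=(ab)\otimes c-a\otimes(bc)$ does not simply scale an energy measure. Here I would compute $\langle c(a\otimes b),c(a'\otimes b')\rangle_{\Hil}$ directly from the definition of the left action and the integral formula, obtaining a sum of terms of the form $\int(\text{products of }c,a,a',b,b')\,d\nu_{\bullet,\bullet}$, and then reorganize using the Leibniz rule~\eqref{eqn-leibnizformsrs} for the energy measures; the key algebraic fact is that the left action corresponds, under~\eqref{eqn-hilbertmoduleviaLeJan}, to a genuine module action once one accounts for the Leibniz correction, so that $c\,\sum_j a_j\otimes b_j$ has energy-measure data controlled by $\|c\|_\infty$ times that of $\sum_j a_j\otimes b_j$. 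Concretely, one shows $\bigl\|c\sum_j a_j\otimes b_j\bigr\|_{\Hil}^2=\sum_{j,k}\int b_jb_k\,d\nu_{ca_j,ca_k}-2\sum_{j,k}\int b_jb_k\,d\nu_{ca_j,a_k}\cdot c+\dots$, and after the Leibniz expansions the cross terms telescope so that the result is $\sum_{j,k}\int b_jb_k\,c^2\,d\nu_{a_j,a_k}$, which is $\leq\|c\|_\infty^2$ times the right side exactly as in the right-action case. I expect this telescoping identity — verifying that the left action really does reduce to multiplying the energy-measure integrand by $c$ modulo the Leibniz terms — to be the main obstacle; everything else is bookkeeping with~\eqref{defn-energymeasure} and~\eqref{eqn-leibnizformsrs}.
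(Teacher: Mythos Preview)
Your proposal is correct and follows exactly the route the paper indicates: the paper does not spell out a proof but simply says that in this setting the theorem ``is a quick consequence of writing $\langle a\otimes b, c\otimes d\rangle_{\Hil}= \int b\,d\, d\nu_{a,c}$ using the energy measure of LeJan,'' which is precisely your reduction. One remark: the telescoping identity you anticipate for~\eqref{boundednessofleftaction} is real and yields $\bigl\|c\sum_j a_j\otimes b_j\bigr\|_{\Hil}^2=\sum_{j,k}\int b_jb_kc^2\,d\nu_{a_j,a_k}$ exactly as you guess; in fact this computation is essentially the content of Theorem~\ref{thm-commutativityofmult} just below (left and right multiplication agree in $\Hil$), so once you have~\eqref{boundednessofrightaction} you can alternatively invoke that identity to get~\eqref{boundednessofleftaction} for free.
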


\begin{remark}
The construction of $\Hil$ is quite simple in our setting.  We note that additional technicalities arise in the $C^{\ast}$-algebra case, and in the case where $\DF$ is not conservative.  These will not concern us here, so we refer the interested reader to~\cite{CS03jfa}.
\end{remark}

Observe from~\eqref{eqn-hilbertmoduleviaLeJan} that $\langle a\otimes b, c\otimes d\rangle_{\Hil}$ is well-defined for all $a,c\in\Alg$ provided that $b,d$ are both in $L^{2}(\nu_{a})$ for all $a\in\Alg$.  This is equivalent to being in the $L^{2}$ space of the classical Carr\'{e} du Champ, so one should think of $\Hil$ as the tensor product of $\Alg$ with this $L^{2}$ space.

We will need two additional results regarding energy measures and the Hilbert module.  The first is known but difficult to find in the literature, while the second is not written anywhere in this form, though it underlies the discussion in Section~10.1 of~\cite{CS03jfa}   because it implies in particular that $\partial(ab)=a(\partial b)+b(\partial a)$ rather than just $a(\partial b)+(\partial a)b$.  It is particularly easy to understand in our setting and will be essential later.
\begin{theorem}\label{thm-energymeasuresnonatomic}
The energy measure $\nu_{a}$ of $a\in\Alg$ is non-atomic.
\end{theorem}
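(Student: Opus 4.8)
The plan is to reduce non-atomicity of $\nu_a$ for a general $a\in\Alg$ to the case of harmonic functions, and then to dispose of that case by zooming into the cell structure. Fix a point $x$; we must show $\nu_a(\{x\})=0$. Let $h_n$ be the piecewise harmonic function agreeing with $a$ on $V_n$ and harmonic on each $n$-cell, and put $g_n=a-h_n$; both lie in $\Alg$. Since the piecewise harmonic functions are dense in $\domDF$ and $h_n$ is the $\DF$-orthogonal projection of $a$ onto the $n$-harmonic functions, $\DF(g_n)=\DF(a)-\DF(h_n)\to 0$. By bilinearity of the energy measure, $\nu_a=\nu_{h_n}+2\nu_{h_n,g_n}+\nu_{g_n}$, and the Cauchy--Schwarz inequality for energy measures applied to the Borel set $\{x\}$ gives $|\nu_{h_n,g_n}(\{x\})|\le \nu_{h_n}(\{x\})^{1/2}\nu_{g_n}(\{x\})^{1/2}$; together with the trivial bound $\nu_{g_n}(\{x\})\le \nu_{g_n}(X)=\DF(g_n)$ this yields $\nu_a(\{x\})\le\bigl(\sqrt{\nu_{h_n}(\{x\})}+\sqrt{\DF(g_n)}\bigr)^{2}$. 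Near $x$ the measure $\nu_{h_n}$ is a finite sum of energy measures of harmonic functions on the (finitely many) cells containing $x$, so it suffices to prove: if $h$ is harmonic on a cell $C$ and $x\in C$, then $\nu_h(\{x\})=0$. Granting this, $\nu_{h_n}(\{x\})=0$ for every $n$, and letting $n\to\infty$ kills $\DF(g_n)$ and finishes the proof.

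For the harmonic case, recall that $x$ lies in a decreasing sequence of cellular neighbourhoods $U_m\downarrow\{x\}$, each a finite union of $m$-subcells of $C$, so $\nu_h(\{x\})\le\nu_h(U_m)$, and $\nu_h(U_m)$ is controlled by the total energy of $h$ on these finitely many $m$-subcells. On each subcell $h$ is again harmonic, being a restriction of a harmonic function, and its energy there is bounded by the square of its oscillation over the subcell times the reciprocal of the subcell's resistance scale. The crucial quantitative input is that the oscillation of a harmonic function over an $m$-subcell contracts geometrically in $m$ --- this is the maximum principle --- and does so strictly faster than the square root of the rate at which the subcell's resistance shrinks. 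In the self-similar model this is the assertion that the harmonic extension maps, renormalised by $r_i^{-1/2}$, have operator norm $<1$ on functions modulo constants, which follows from the energy identity $\sum_i r_i^{-1}\DF_0(A_i v)=\DF_0(v)$ together with the fact that no nonconstant harmonic function is constant on the complement of a single cell. Consequently the energy of $h$ on the cells shrinking to $x$ tends to $0$ geometrically, so $\nu_h(\{x\})=0$.

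The main obstacle is precisely this contraction estimate for harmonic functions; everything else is soft (bilinearity and Cauchy--Schwarz for energy measures, density of piecewise harmonic functions, and the observation that an atom of a positive measure is at most its total mass). The estimate is exactly where the geometry enters: on a finite graph, or on a fractal with a ``dangling'' cell, energy measures genuinely can carry atoms, so one must use that the cell structure permits unbounded zooming and that the harmonic structure is uniformly non-degenerate across all cells and scales --- for general, possibly non-self-similar, finitely ramified fractals this uniform non-degeneracy is built into the resistance-form hypotheses. Note also that the argument never uses whether $x$ is a vertex of the cell structure, so it shows simultaneously that $\nu_a$ charges no point of the countable set $\bigcup_n V_n$.
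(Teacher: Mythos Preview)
The paper does not prove this theorem at all: it simply cites Theorem~7.1.1 of Bouleau--Hirsch, which gives non-atomicity of energy measures for any strong local regular Dirichlet form on an abstract measure space. In particular the result sits in the general Dirichlet-form subsection of Section~\ref{background}, \emph{before} resistance forms or cell structures are introduced, and it is used at that level of generality (e.g.\ in Theorem~\ref{thm-commutativityofmult}). Your argument, by contrast, assumes from the outset a finitely ramified cell structure with a compatible resistance form and piecewise harmonic approximants, so at best it establishes a special case of the stated theorem.

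Even within that narrower setting the harmonic step has a genuine gap. You assert that the energy of a harmonic function on an $m$-subcell is bounded by its oscillation squared divided by the subcell's resistance scale; but the resistance-form estimate~(RF4) gives the \emph{reverse} inequality, $\Osc_{\alpha}(h)^{2}\le R_{\alpha}\,\DF_{\alpha}(h)$, so this route cannot produce an upper bound on $\DF_{\alpha}(h)$. You then switch to the claim that the normalised harmonic extension maps $r_{i}^{-1/2}\tilde{A}_{i}$ have operator norm strictly less than~$1$ on functions modulo constants. The energy identity $\sum_{i} r_{i}^{-1}\DF_{0}(A_{i}v)=\DF_{0}(v)$ only gives norm $\le 1$; strict inequality for a fixed $i$ is equivalent to the statement that no nonconstant harmonic function is constant on \emph{every} cell other than the $i$-th, and this can fail (think of a cell attached to the rest of the space through a single boundary vertex). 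Even when it holds for each $i$, you need a bound uniform over all cells at all scales to obtain geometric decay, and nothing in the general finitely-ramified hypotheses supplies that. So the ``main obstacle'' you identify is not actually overcome by the argument you sketch; the paper sidesteps it entirely by invoking the general Bouleau--Hirsch theorem.
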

The proof of Theorem~\ref{thm-energymeasuresnonatomic} follows from Theorem~7.1.1 in~\cite{BouleauHirsch}.
\begin{theorem}\label{thm-commutativityofmult}
If $\DF$ is as above (i.e conservative,  strong local, and regular) and $a,b,c\in\Alg$ then $a(b\otimes c)=(b\otimes c)a$ in $\Hil$.
\end{theorem}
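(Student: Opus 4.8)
The plan is to unwind the two module actions and reduce the identity to an orthogonality statement that falls out of the Leibniz rule for energy measures. By the formulas in Definition~\ref{def-sH}, $a(b\otimes c) = (ab)\otimes c - a\otimes(bc)$ and $(b\otimes c)a = b\otimes(ca)$; since $\Alg$ is an algebra, $ab$, $bc$ and $ca$ all lie in $\Alg$, so the assertion is equivalent to
\[
  \xi := (ab)\otimes c - a\otimes (bc) - b\otimes (ca) = 0 \quad\text{in } \Hil,
\]
with $\xi$ understood a priori as an element of $\Alg\otimes\Alg$.

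First I would pair $\xi$ with an arbitrary elementary tensor $y\otimes z$, $y,z\in\Alg$, using the LeJan form \eqref{eqn-hilbertmoduleviaLeJan} of the inner product, namely $\langle p\otimes q,\, y\otimes z\rangle_{\Hil} = \int qz\, d\nu_{p,y}$, to get
\[
  \langle \xi,\, y\otimes z\rangle_{\Hil}
  = \int cz\, d\nu_{ab,y} - \int bcz\, d\nu_{a,y} - \int acz\, d\nu_{b,y}.
\]
Next I would apply the Leibniz rule for energy measures \eqref{eqn-leibnizformsrs}, in the form of the measure identity $d\nu_{ab,y} = a\, d\nu_{b,y} + b\, d\nu_{a,y}$, to rewrite the first integral as $\int acz\, d\nu_{b,y} + \int bcz\, d\nu_{a,y}$. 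The three terms then cancel exactly, so $\langle \xi,\, y\otimes z\rangle_{\Hil} = 0$ for all $y,z\in\Alg$. Since $\xi$ is itself a finite sum of such elementary tensors, bilinearity gives $\|\xi\|_{\Hil}^{2} = \langle \xi,\xi\rangle_{\Hil} = 0$, and because $\Hil$ is obtained from $\Alg\otimes\Alg$ by quotienting out the null space of $\|\cdot\|_{\Hil}$ and completing, this forces $\xi = 0$ in $\Hil$, i.e. $a(b\otimes c) = (b\otimes c)a$.

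I do not expect a genuine obstacle here. The only real inputs are the energy-measure representation \eqref{eqn-hilbertmoduleviaLeJan} and the Leibniz rule \eqref{eqn-leibnizformsrs}, and both are available precisely because $\DF$ is regular and strong local; this is exactly where the hypotheses of the theorem enter, reflecting the fact that for a non-local form the left and right actions need not coincide. One could instead expand \eqref{defn.H0norm} directly and collect terms using the bilinearity of $\DF$, but this produces many more terms and no extra insight. The care that is needed is purely bookkeeping: checking that each product that occurs ($ab$, $bc$, $ca$, and the products appearing inside the integrals) is a legitimate element of $\Alg$ or of $L^{\infty}(\mu)$, so that the signed measures $\nu_{\cdot,\cdot}$ and all the integrals above are well-defined.
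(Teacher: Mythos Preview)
Your proof is correct and follows essentially the same route as the paper: both show that $\xi = (ab)\otimes c - a\otimes(bc) - b\otimes(ca)$ has zero $\Hil$-norm by combining the LeJan energy-measure representation \eqref{eqn-hilbertmoduleviaLeJan} with the Leibniz identity \eqref{eqn-leibnizformsrs}. The only cosmetic difference is that the paper first factors $\xi = (ab\otimes\mathds{1} - a\otimes b - b\otimes a)c$ and then expands $\|\xi\|_{\Hil}^{2}$ directly, whereas you pair $\xi$ with an arbitrary elementary tensor; this is a slightly leaner bookkeeping of the same cancellation.
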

\begin{remark}\label{rem-commutativityofmult}
Since left multiplication by $a$ is defined only for $a\in\Alg$ while right multiplication is defined at least for $a\in L^{\infty}$ this result says that left multiplication coincides with the restriction of right multiplication to $\Alg$.
\end{remark}
\begin{proof}
Observe first that $a(b\otimes c)-(b\otimes c)a=ab\otimes c - a\otimes bc - b\otimes ac=(ab\otimes1-a\otimes b-b\otimes a)c$.  Direct computation using~\eqref{eqn-leibnizformsrs} verifies that
\begin{align*}
    \lefteqn{\|(ab\otimes 1 - a\otimes b - b\otimes a)c\|_{\Hil}^{2}}\quad& \\
    &= \int c^{2} ( d\nu_{ab,ab} + b^{2}\, d\nu_{a,a} + a^{2} \,d\nu_{b,b} - 2 b\, d\nu_{ab,a} - 2 a\, d\nu_{ab,b} + 2 ab\, d\nu_{a,b})\\
    &= \int c^{2} (- a\,d\nu_{b,ab} - b\,d\nu_{a,ab}+ b^{2}\, d\nu_{a,a} + a^{2} \,d\nu_{b,b}+ 2 ab\, d\nu_{a,b})\\
    &=0.
    \end{align*}
\end{proof}

\subsection*{Fredholm modules}
The notion of a Fredholm module generalizes that of an elliptic differential operator on a compact manifold, and is central to the theory of non-commutative geometry~\cite[Chapter~4]{Connesbook}.  The size of such operators is described using the Schatten--von Neumann norm.

\begin{definition}
A Hilbert module $\Hil$ over an involutive algebra $\mathcal{A}$ is Fredholm if there is a self-adjoint involution $F$ on $\Hil$ such that for each $a\in \mathcal{A}$, the commutator $[F,a]$ is a compact operator.  A Fredholm module $(\Hil,F)$ is $p$-summable for some $p\in[1,\infty)$ if for each  $a\in\mathcal{A}$ the $p^{\text{th}}$ power of the Schatten--von Neumann norm $\sum_{n=0}^{\infty} s_{n}^{p}([F,a])$ is finite, where $\{s_{n}\}$ is the set of singular values of $[F,a]$.  It is weakly $p$-summable if $\sup_{N\geq1}N^{1/p -1}\sum_{n=0}^{N-1} s_{n}([F,a])$ is finite, unless $p=1$ in which case weak $1$-summability is that $\sup_{N\geq2}(\log N)^{-1}\sum_{n=0}^{N-1} s_{n}([F,a])<\infty$.
\end{definition}

\begin{remark}
Weak $p$-summability of $[F,a]$ is not the same as weak $1$-summability of $|[F,a]|^{p}$.  This causes a minor error in~\cite[Theorem~3.9]{CS} which we will correct in Remark~\ref{notdSsummableproved} below (this does not affect other results in the cited paper).
\end{remark}

When comparing ordinary calculus with the quantized calculus of non-commutative geometry, integration is replaced by the Dixmier trace of a compact operator.  One consequence of Dixmier's construction~\cite{Dixmier} is that if $R$ is a weakly $1$-summable operator then it has a Dixmier trace $\Tr_{w}(R)$.  In essence this is a notion of limit
\begin{equation*}
    \Tr_{w}(R)
    =\lim_{w} (\log{N})^{-1}\sum_{n=0}^{N} s_{n}(R).
    \end{equation*}
The trace is finite, positive and unitary, and it vanishes on trace-class operators.

Cipriani and Sauvageot~\cite{CS} have shown that post-critically finite sets with regular harmonic structure support a Fredholm module for which $|[F,a]|^{d_{S}}$ is weakly $1$-summable, where $d_{S}$ is the so-called ``spectral dimension''.  We give a shorter and more general version of their proof in Section~\ref{existencesection} below.  They have used this and the Dixmier trace to obtain a conformally invariant energy functional on any set of this type.

\subsection*{Resistance forms}

In much of what follows we will consider a special class of Dirichlet forms, the resistance forms of Kigami~\cite{Ki01,Ki03}.  In essence these are the Dirichlet forms for which points have positive capacity.  A formal definition is as follows.

\begin{definition}\label{def-resistform}
A pair $(\E,\Dom\E)$ is called a resistance form on a countable set $V_*$ if it satisfies:
\begin{itemize}
\item[(RF1)] $\Dom\E$ is a linear subspace of the functions $V_{\ast}\to\mathbb{R}$ that contains the constants,
 $\E$ is a nonnegative symmetric quadratic form on $\Dom\E$, and $\E(u,u)=0$ if and only if $u$ is constant.
\item[(RF2)] The quotient of $\Dom\E$ by constant functions is Hilbert space with the norm $\E(u,u)^{1/2}$.
\item[(RF3)] If $v$ is a function on a finite set $V\subset V_*$ then there is $u\in\Dom\E$ with $u\big|_V=v$.
\item[(RF4)] For any $x,y\in V_*$ the effective resistance between $x$ and $y$ is
\begin{equation*}
R(x,y)=\sup\left\{ \frac{\big(u(x)-u(y)\big)^2}{\E(u,u)}:u\in\Dom\E,\E(u,u)>0\right\}<\infty.
\end{equation*}
\item[(RF5)] (Markov Property.) If $u\in\Dom\E$ then $\bar u(x)=\max(0,\min(1,u(x)))\in\Dom\E$ and $\E(\bar u,\bar u)\leqslant\E(u,u)$.
\end{itemize}
\end{definition}

The main feature of resistance forms is that they are completely determined by a sequence of traces to finite subsets.  The following results of Kigami make this precise.

\begin{proposition}[\protect{\cite{Ki01,Ki03}}]\label{basicfactsaboutresistmetric} Resistance forms have the following properties.
\begin{enumerate}
\item $R(x,y)$ is a metric on $V_*$.  Functions in
$\Dom\E$ are $R$-continuous, thus have unique $R$-continuous extension to the $R$-completion $X_R$ of $V_*$.
\item If $U\subset V_*$ is finite then a Dirichlet form $\E_U$ on $U$ may be defined
by
\begin{equation*}
    \E_U(f,f)=\inf\{\E(g,g):g\in\Dom\E, g\big|_U=f\}
    \end{equation*}
in which the infimum is achieved at a unique $g$. The form $\E_U$ is called the trace of $\E$ on $U$, denoted
$\E_U=\Trace U(\E)$.  If $U_1\subset U_2$ then $\E_{U_1}=\Trace {U_1}(\E_{U_2})$.
\end{enumerate}
\end{proposition}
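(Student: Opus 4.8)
The plan is to prove part (2) first --- it is purely variational and uses only (RF1)--(RF3) and (RF5) --- and then to deduce part (1) by reducing all questions about $R$ to finite electrical networks.

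For part (2) I would fix a nonempty finite $U\subset V_*$ and $f\colon U\to\mathbb{R}$. By (RF3) the set of competitors $\{g\in\Dom\E:g|_U=f\}$ is nonempty, and its image in the Hilbert space $\Dom\E/(\text{constants})$ (complete with norm $\E(\cdot,\cdot)^{1/2}$ by (RF2)) is a closed affine subspace: closedness holds because each functional $u\mapsto u(p)-u(q)$ is continuous on that Hilbert space by (RF4), so an $\E$-Cauchy sequence of competitors has a limit whose restriction to $U$ equals $f$ up to an additive constant, which is normalized away using the value at a point of $U$. Hence there is a unique competitor $g$ of minimal $\E$-norm, and requiring $g|_U=f$ pins it down as a function; this gives existence and uniqueness, and shows $\E_U(f):=\E(g,g)$ is a well-defined nonnegative quadratic form on $\mathbb{R}^U$ (quadratic since $f\mapsto g$ is linear). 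The Markov property transfers: for $\bar f=\max(0,\min(1,f))$ and $g$ the minimizer for $f$, (RF5) gives $\bar g=\max(0,\min(1,g))\in\Dom\E$ with $\bar g|_U=\bar f$ and $\E(\bar g,\bar g)\le\E(g,g)=\E_U(f)$, so $\E_U(\bar f)\le\E_U(f)$; hence $\E_U$ is a Dirichlet form on $U$. Compatibility $\E_{U_1}=\Trace{U_1}(\E_{U_2})$ for $U_1\subset U_2$ is then just transitivity of the infimum: minimize over $\{g:g|_{U_2}=h\}$ first, then over extensions $h$ of $f$ to $U_2$.

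For part (1) the metric axioms for $R$ other than the triangle inequality are immediate: symmetry and nonnegativity are clear, $R(x,x)=0$ since the numerator vanishes, and for $x\ne y$, (RF3) supplies a nonconstant $u$ with $u(x)\ne u(y)$, which has $\E(u,u)>0$ by (RF1), whence $R(x,y)\ge\E(u,u)^{-1}>0$; finiteness is (RF4). For the triangle inequality I would fix distinct $x,y,z$, set $U=\{x,y,z\}$, and first check that $R$ restricted to $U$ coincides with the effective resistance $R_U$ of the network $\E_U$: one direction uses $\E(u,u)\ge\E_U(u|_U)$ in the sup from (RF4), the other uses energy-minimizing extensions of functions on $U$. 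Next, since $\E_U$ is a nonnegative quadratic form on $\mathbb{R}^U$ annihilating constants, it equals $\tfrac12\sum_{p\ne q}c_{pq}(f(p)-f(q))^2$ with $c_{pq}=c_{qp}$, and the Markov property forces $c_{pq}\ge0$ (nonpositivity of off-diagonal matrix entries). Finally, for this three-point conductance network --- connected because all of $R_U(x,y),R_U(y,z),R_U(x,z)$ are finite, hence $D:=c_{xy}c_{yz}+c_{yz}c_{xz}+c_{xz}c_{xy}>0$ --- a parallel/series computation gives $R_U(x,z)=(c_{xy}+c_{yz})/D$ and the two analogues, so $R_U(x,y)+R_U(y,z)-R_U(x,z)=2c_{xz}/D\ge0$. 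This yields $R(x,z)\le R(x,y)+R(y,z)$. The continuity statement follows since $|u(x)-u(y)|\le\E(u,u)^{1/2}R(x,y)^{1/2}$ from (RF4) makes $u$ uniformly $R$-continuous (in fact $\tfrac12$-H\"older), hence uniquely extendable to $X_R$.

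The one genuinely delicate point is the triangle inequality for $R$ itself, not merely for $R^{1/2}$: the crude bound $|u(x)-u(z)|\le|u(x)-u(y)|+|u(y)-u(z)|\le\bigl(R(x,y)^{1/2}+R(y,z)^{1/2}\bigr)\E(u,u)^{1/2}$ only gives $R(x,z)^{1/2}\le R(x,y)^{1/2}+R(y,z)^{1/2}$, and the sharper statement really needs the series/parallel structure visible only after passing to a finite network --- which is also the only place in part (1) where the Markov axiom (RF5) is used (to get $c_{pq}\ge 0$). I expect the auxiliary fact that a Markovian, constant-annihilating quadratic form on a finite set is a nonnegative conductance form to be the most fiddly step, though it is standard; everything else (the Hilbert-space projection, transitivity of traces, the $R=R_U$ identification) is routine.
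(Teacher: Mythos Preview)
The paper does not supply its own proof of this proposition: it is stated with a citation to Kigami~\cite{Ki01,Ki03} and used as background. Your argument is correct and is essentially the standard route taken in those references---establish the trace form via Hilbert-space projection in $\Dom\E/\text{constants}$, identify $R$ with the effective resistance of the finite trace network, and then verify the triangle inequality by the explicit three-point conductance computation (where the Markov property is exactly what guarantees nonnegative conductances). Your remarks on where (RF5) enters and why the naive estimate only yields a triangle inequality for $R^{1/2}$ are accurate and worth keeping.
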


\begin{proposition}[\protect{\cite{Ki01,Ki03}}]\label{Dirformdeterminedbytraces}
Suppose $V_n\subset V_*$ are finite sets such that $V_n\subset
V_{n+1}$ and $\bigcup_{n=0}^\infty V_n$ is $R$-dense in
$V_*$. Then $\E_{{V_n}}(f,f)$ is non-decreasing and $\E(f,f)=\lim_{n\to\infty}\E_{{V_n}}(f,f)$ for any $f\in\Dom\E$. Hence $\E$ is uniquely defined by the sequence of finite dimensional traces $\E_{V_n}$ on ${V_n}$.

Conversely, suppose $V_n$ is an increasing sequence of finite sets each supporting a resistance form $\E_{V_n}$,
and the sequence is compatible in that each $\E_{V_n}$ is
the trace of $\E_{V_{n+1}}$ on ${V_n}$. Then there is a resistance form
$\E$ on $V_*=\bigcup_{n=0}^\infty V_n$ such that
$\E(f,f)=\lim_{n\to\infty}\E_{{V_n}}(f,f)$ for any $f\in\Dom\E$.
\end{proposition}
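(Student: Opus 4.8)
The plan is to prove the two implications separately, using the Hilbert space structure of (RF2) for the first and a direct verification of (RF1)--(RF5) for the converse. The common first step is monotonicity of $n\mapsto\E_{V_n}(f|_{V_n},f|_{V_n})$: by the variational description of the trace in Proposition~\ref{basicfactsaboutresistmetric} together with the compatibility $\E_{V_n}=\Trace{V_n}(\E_{V_{n+1}})$, any $g$ with $g|_{V_{n+1}}=f|_{V_{n+1}}$ also has $g|_{V_n}=f|_{V_n}$, so $\E_{V_n}(f|_{V_n})$ is an infimum over a larger competitor set than $\E_{V_{n+1}}(f|_{V_{n+1}})$; in the first part, taking $g=f$ likewise gives $\E_{V_n}(f|_{V_n})\le\E(f)$. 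Hence $\lim_n\E_{V_n}(f|_{V_n})$ always exists in $[0,\infty]$, and in the first part it is at most $\E(f)$.

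\textbf{First implication, the reverse inequality.} The key observation is that the energy minimizer $g_n$ with $g_n|_{V_n}=f|_{V_n}$ (unique, with $\E(g_n)=\E_{V_n}(f|_{V_n})$, by Proposition~\ref{basicfactsaboutresistmetric}) represents an orthogonal projection. Work in the Hilbert space $\dot{\mathcal D}=\Dom\E/\mathbb R$ of (RF2), with inner product $\E(\cdot,\cdot)$, and let $\mathcal N_n=\{[u]\in\dot{\mathcal D}:u|_{V_n}\text{ is constant}\}$; this is a closed subspace, being the intersection over $x,y\in V_n$ of the kernels of the functionals $[u]\mapsto u(x)-u(y)$, which are $\E$-continuous by (RF4). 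The constraint gives $[f-g_n]\in\mathcal N_n$, while the Euler--Lagrange equation for the minimizer gives $[g_n]\perp\mathcal N_n$, so $[g_n]$ is the orthogonal projection of $[f]$ onto $\mathcal N_n^{\perp}$ and $\E_{V_n}(f|_{V_n})=\|[g_n]\|^2$. The $\mathcal N_n$ decrease, and $\bigcap_n\mathcal N_n=\{[u]:u\text{ constant on }\bigcup_nV_n\}=\{0\}$ because $\bigcup_nV_n$ is $R$-dense and functions in $\Dom\E$ are $R$-continuous (Proposition~\ref{basicfactsaboutresistmetric}); hence $\mathcal N_n^{\perp}$ increases to all of $\dot{\mathcal D}$, the associated projections converge strongly to the identity, and $\E_{V_n}(f|_{V_n})\uparrow\|[f]\|^2=\E(f)$.

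\textbf{Converse, construction and (RF1), (RF3)--(RF5).} Define $\Dom\E=\{f:V_*\to\mathbb R:\lim_n\E_{V_n}(f|_{V_n},f|_{V_n})<\infty\}$ and let $\E(f,f)$ be that limit, extended to a symmetric bilinear form by polarization. Properties (RF1) and (RF5) pass to the limit from the corresponding properties of each $\E_{V_n}$: the triangle inequality for $\E_{V_n}^{1/2}$ gives linearity of $\Dom\E$, the constants lie in $\Dom\E$ with zero energy, if $\E(u,u)=0$ then every $\E_{V_n}(u|_{V_n})=0$ so $u$ is constant on each $V_n$ and hence on $V_*$, and $\max(0,\min(1,u))$ has energy on $V_n$ at most $\E_{V_n}(u|_{V_n})\le\E(u)$ for every $n$. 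For (RF3), given $v$ on a finite $V\subset V_*$, choose $N$ with $V\subset V_N$, extend $v$ arbitrarily to $V_N$, and then extend successively to each $V_m$, $m>N$, by the unique $\E_{V_m}$-harmonic extension; by the trace relation each step preserves the energy, so the glued function $u$ on $V_*$ satisfies $\E_{V_m}(u|_{V_m})=\E_{V_N}(u|_{V_N})$ for all $m\ge N$ and hence $u\in\Dom\E$ with $u|_V=v$. Combining this with $\E(u)\ge\E_{V_N}(u|_{V_N})$ for every $u\in\Dom\E$ yields $R(x,y)=R_{V_N}(x,y)<\infty$ whenever $x,y\in V_N$, which is (RF4) (and shows $R_{V_n}(x,y)$ is the same for every $n$ with $x,y\in V_n$).

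\textbf{Converse, (RF2) --- the main obstacle.} What remains, and is the only genuinely delicate point, is completeness of $\dot{\mathcal D}=\Dom\E/\mathbb R$. Given $(f_k)$ Cauchy for $\E^{1/2}$, normalize representatives by $f_k(x_0)=0$ for a fixed $x_0\in V_*$; for each $x\in V_*$, choosing $n$ with $x,x_0\in V_n$, the estimate $|f_k(x)-f_j(x)|^2\le R_{V_n}(x,x_0)\,\E_{V_n}((f_k-f_j)|_{V_n})\le R_{V_n}(x,x_0)\,\E(f_k-f_j)$ shows $(f_k(x))_k$ is Cauchy in $\mathbb R$, so a pointwise limit $f$ on $V_*$ exists. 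For each fixed $n$, continuity of $\E_{V_n}$ on the finite-dimensional space of functions on $V_n$ gives $\E_{V_n}(f|_{V_n})=\lim_k\E_{V_n}(f_k|_{V_n})\le\sup_k\E(f_k)<\infty$, so $f\in\Dom\E$; and passing to the limit first in $k$ and then in $n$ in $\E_{V_n}((f_j-f_k)|_{V_n})\le\E(f_j-f_k)$ gives $\E(f_j-f)\le\limsup_k\E(f_j-f_k)$, so $f_k\to f$ in $\E^{1/2}$. Getting these two limits in the correct order is the crux; the remaining verifications are routine bookkeeping.
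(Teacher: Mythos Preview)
The paper does not actually prove this proposition: it is stated as a known result and attributed to Kigami \cite{Ki01,Ki03}, with no proof supplied in the paper itself. Your argument is correct and is essentially the standard one found in Kigami's work---monotonicity from the variational definition of the trace, the limit $\E_{V_n}(f)\uparrow\E(f)$ via the orthogonal-projection interpretation of harmonic extension in the Hilbert space $\Dom\E/\mathbb R$, and the converse by direct verification of (RF1)--(RF5) with completeness handled by the pointwise-limit-then-Fatou-type argument you give. There is nothing to compare against in the present paper, and your writeup would serve perfectly well as a self-contained proof of the cited result.
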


For convenience we will write $\E_n(f,f)=\E_{V_n}(f,f)$.  A function is called harmonic if it
minimizes the energy for the given set of boundary values, so a harmonic function is uniquely
defined by its restriction to $V_0$. It is shown in \cite{Ki03} that any function $h_0$ on  $V_0$
has a unique continuation to a harmonic function $h$, and $\E(h,h)=\E_n(h,h)$ for all $n$. This
latter is also a sufficient condition: if $g\in\Dom\E$ then $\E_0(g,g)\leqslant\E(g,g)$ with
equality precisely when $g$ is harmonic.

For any function $f$ on $V_{n}$ there is a unique energy minimizer $h$ among those functions equal
to $f$ on $V_{n}$. Such energy minimizers are called $n$-harmonic functions. As with harmonic
functions, for any function $g\in\Dom\E$ we have $\E_n(g,g)\leqslant\E(g,g)$, and $h$ is
$n$-harmonic if and only if $\E_n(h,h)=\E(h,h)$.

\subsection*{Resistance forms on finitely ramified cell structures}

Analysis using Kigami's resistance forms was first developed on the Sierpinski gasket and then generalized to the class of post-critically finite self-similar (pcfss) sets, which includes gasket-type fractals and many other examples~\cite{Ki01}.  It has subsequently been recognized~\cite{T08cjm} that this theory is applicable to a more general class of metric spaces with finitely ramified cell structure as defined below.  The latter include non-pcf variants of the \Sig~\cite{T08cjm}, the diamond fractals in~\cite{ADT} and the Basilica Julia set treated in~\cite{RoTe}.

\begin{definition}\label{def-frs}A {\fr\ set} $X$ is
a compact metric space with a {cell structure}~$\{X_\alpha\}_{\alpha\in\sA}$ and
a {boundary (\text{vertex}) structure} $\{V_\alpha\}_{\alpha\in\sA}$
such that:
\begin{itemize}
\item[(FRCS1)]
$\sA$ is a countable index set;
 \item[(FRCS2)]
each $X_\alpha$
is a distinct compact connected subset of $X$;
 \item[(FRCS3)]
 each  $V_\alpha$ is a finite subset of
$X_\alpha$;
 \item[(FRCS4)]
if $X_\alpha=\bigcup_{j=1}^k X_{\alpha_j}$  then
$V_\alpha\subset\bigcup_{j=1}^k V_{\alpha_j}$;
 \item[(FRCS5)]
{there exists a filtration $\{\sA_n\}_{n=0}^\infty$ such that
\begin{itemize}
\item[(i)]
$\sA_n$ are finite subsets of $\sA$,  $\sA_{0}=\{0\}$, and
$X_{0}= X$;
\item[(ii)]
$\sA_n\cap\sA_m=\varnothing$ if $n\neq m$;
 \item[(iii)]
for any $\alpha\in\sA_n$ there are
${\alpha_1},...,{\alpha_k}\in\sA_{n+1}$
such that $X_\alpha=\bigcup_{j=1}^k X_{\alpha_j}$;
\end{itemize}}
 \item[(FRCS6)]
$X_{\alpha'}\bigcap X_{\alpha\vph}=V_{\alpha'}\bigcap V_{\alpha\vph}$ for any
two distinct $\alpha,\alpha'\in\sA_n$;
\item[(FRCS7)]
for any strictly decreasing infinite cell sequence $X_{\alpha_1}\supsetneq
X_{\alpha_2}\supsetneq...$ there exists $x\in X$ such that
 $\bigcap_{n\geqslant1} X_{\alpha_n} =\{x\}$.
\end{itemize}
If these conditions are satisfied, then $(X,\{X_\alpha\}_{\alpha\in\sA},\{V_\alpha\}_{\alpha\in\sA})$
is called a {\it \frcs.}
\end{definition}

We denote $V_n=\bigcup_{\alpha\in\sA_n}V_\alpha $.
Note that $V_n\subset V_{n+1}$ for all $n\geqslant0$.
We say that $X_\alpha$ is an $n$-cell if $\alpha\in \sA_n$.
In this definition the vertex boundary $V_{0}$ of $X_{0}=X$ can be arbitrary, and in general may
have no relation with the topological structure of $X$.  However the cell structure is intimately
connected to the topology.
%\begin{enumerate}
%\item
For any $x\in X$ there is a strictly decreasing infinite sequence of cells
satisfying condition (FRCS7) of the definition. The diameter of cells in any such sequence tend to
zero.
%\item
The topological boundary of $X_\alpha$ is contained in $V_\alpha$ for any $\alpha\in\sA$.
%\item
The set $V_*=\bigcup_{\alpha\in\sA} V_\alpha$ is countably infinite, and $X$ is uncountable.
%\item
For any distinct $x,y\in X$ there is $n(x,y)$ such that if $m\geqslant n(x,y)$ then any $m$-cell can not
contain both $x$ and $y$.
%\item
For any $x\in X$ and $n\geqslant 0$, let $U_n(x)$ denote the union of
all $n$-cells that contain $x$. Then the collection of open sets $\mathcal U=\{U_n(x)
^\circ\}_{x\in X,n\geqslant 0}$ is a fundamental sequence of neighborhoods. Here $B^\circ$ denotes
the topological interior of a set $B$. Moreover, for any $x\in X$ and open neighborhood $U$ of $x$
there exist $y\in V_*$ and $n$ such that $x\in U_n(x)\subset U_n(y)\subset U$. In particular, the
smaller collection of open sets $\mathcal U'=\{U_n(x) ^\circ\}_{x\in V_*,n\geqslant 0}$ is  a
countable fundamental sequence of neighborhoods. A detailed treatment of finitely ramified cell structures may be found in~\cite{T08cjm}.
%\end{enumerate}\end{proposition}

Let us now suppose that there is a resistance form $(\E,\dom\E)$ on $V_\ast$ and recall that $\E_n$ is the trace of $\E$ on the finite set $V_n$.  Since it is finite dimensional, $\E_n$ may be written as $\E_{n}(u,u)=\sum_{x,y\in V_{n}}c_{xy}(u(x)-u(y))^{2}$ for some non-negative constants $c_{xy}$.  It is therefore possible to restrict $\E_n$ to $X_\a$ by setting
\begin{equation*}
    \E_\a (u,u)
    = \sum_{x,y\in V_{\a}} c_{xy}(u(x)-u(y))^{2}.
    \end{equation*}
Note that this differs from the trace of $\E_n$ to $V_\a$ because the latter includes the effect of connections both inside and outside  $X_\a$, while $\E_\a$ involves only those inside $X_\a$.  It follows immediately that
\begin{equation}\label{e-compatible}
 \E_n=\sum_{\alpha\in\sA_n}\E_{V_\alpha}
\end{equation}
for all $n$.

When using a \frcs\ with resistance form we will make two assumptions relating the topology of $X$ to the resistance metric.  The first is to ensure the local  connectivity in the resistance metric, and the second is to ensure the continuity of the finite energy functions  in the topology of $X$.  
\begin{assumption}\label{a-1} {\ }

\begin{enumerate}
	\item 
 each $\E_\a$ is irreducible on $V_\a$;
	\item all $n$-harmonic functions are continuous in the topology of $X$.
\end{enumerate}
\end{assumption}
These conditions can be easily verified in all known examples of self-similar finitely ramified fractals, and in many non self-similar examples. Moreover, in many cases, such as the case of so-called regular harmonic structures on the \pcf\ self-similar sets (see\cite{Ki93pcf,Ki01}), the topology of $X$ coincides with the topology given by the effective resistance metric. 

It is  proved in~\cite{T08cjm} that then any $X$-continuous function is $R$-continuous and any $R$-Cauchy sequence converges in the
topology of $X$.  There is also a continuous injection $\theta:X_R\to X$ which is
the identity on $V_*$, so we can identify the $R$-completion $X_R$ of $V_\ast$ with the the $R$-closure of $V_*$ in $X$. In a
sense, $X_R$ is the set where the Dirichlet form $\E$ ``lives''.

\begin{proposition}[\protect{\cite{Ki03,T08cjm}}]\label{prop-energymsroncellisrestriction}
Suppose that all $n$-harmonic functions are continuous in the topology of $X$ and let $\mu$ be a finite Borel measure on $(X_R, R)$ such
that all nonempty open sets have positive measure. Then $\E$ is a local regular \Df\ on
$L^{2}(X_R,\mu)$.  Moreover if $u$ is $n$-harmonic then its energy measure $\nu_u$ is a finite 
non-atomic Borel measure on $X$ that satisfies $\nu_u(X_\alpha)=\E_{\alpha}(u\big|_{V_\alpha},u\big|_{V_\alpha})$ for all $\alpha\in\sA_m$, $m\geqslant n$.
\end{proposition}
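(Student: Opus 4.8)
I would split the statement into the Dirichlet‑form assertion, which is essentially Kigami's resistance‑form machinery, and the cellular formula for $\nu_u$, which is the substantive point and which I would reduce to a finite‑dimensional computation via harmonicity. For the first assertion: every $f\in\dom\E$ is $R$‑continuous by Proposition~\ref{basicfactsaboutresistmetric}, hence bounded since $X_R$ is a closed subset of the compact space $X$, so $\dom\E\subseteq L^2(\mu)$ because $\mu$ is finite; closedness on $L^2(\mu)$ follows from (RF2) (together with the fact that $R$‑convergence forces pointwise convergence on $V_*$, hence $L^2$‑convergence), and the Markov property is exactly (RF5). For regularity, the $n$‑harmonic functions lie in $\dom\E\cap C(X)$ by Assumption~\ref{a-1}(2), are energy‑dense in $\dom\E$ by Proposition~\ref{Dirformdeterminedbytraces}, and together with their products (which remain in $\dom\E\cap C(X)$ by the algebra property of $\Alg$) form a point‑separating, constant‑containing subalgebra of $C(X_R)$, hence are uniformly dense by Stone--Weierstrass. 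Strong locality comes from the cellular decomposition~\eqref{e-compatible}: if $a$ is constant on an open set containing the (compact) support of $b$, then, the diameters of $n$‑cells tending to zero, for all large $n$ each $n$‑cell either lies in the region where $a$ is constant or misses the support of $b$ (so carries $b\equiv 0$), whence every summand of $\E_n(a,b)=\sum_{\a\in\sA_n}\E_{V_\a}(a,b)$ vanishes and $\E(a,b)=\lim_n\E_n(a,b)=0$.

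Now the energy measure. First $\nu_u(X)=\E(u,u)<\infty$ by taking $f\equiv 1$ in~\eqref{defn-energymeasure} and using conservativeness, and $\nu_u$ is non‑atomic by Theorem~\ref{thm-energymeasuresnonatomic} since $u\in\Alg$ (it is continuous on the compact set $X_R$, hence bounded). Since an $n$‑harmonic function is automatically $m$‑harmonic for every $m\geq n$ (a minimizer of $\E$ over $\{v:v|_{V_n}=u|_{V_n}\}$ minimizes over the smaller set $\{v:v|_{V_m}=u|_{V_m}\}$), it suffices to prove $\nu_u(X_\a)=\E_\a(u|_{V_\a})$ for $\a\in\sA_m$ assuming $u$ is $m$‑harmonic. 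The engine is the identity
\[
\int g\,d\nu_u=\sum_{x,y\in V_m}c_{xy}\,\tfrac12\bigl(g(x)+g(y)\bigr)\bigl(u(x)-u(y)\bigr)^2,
\]
valid whenever $u$ and $g$ are both $m$‑harmonic. To prove it I put $f=g$ and $a=b=u$ in~\eqref{defn-energymeasure}, obtaining $\int g\,d\nu_u=\E(ug,u)-\tfrac12\E(u^2,g)$; then I use that $m$‑harmonicity of $u$ makes $\E(w,u)=\E_m(w,u)$ for every $w\in\dom\E$ (because $u$ is $\E$‑orthogonal to functions vanishing on $V_m$, and $\E$ and $\E_m$ agree on $m$‑harmonic functions) and likewise $\E(w,g)=\E_m(w,g)$; substituting and applying, edge by edge, the elementary identity $(u(x)g(x)-u(y)g(y))(u(x)-u(y))-\tfrac12(u(x)^2-u(y)^2)(g(x)-g(y))=\tfrac12(g(x)+g(y))(u(x)-u(y))^2$ gives the formula.

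Given this, fix $\a\in\sA_m$ and for $k\geq 1$ let $g_{\a,k}$ be the $(m+k)$‑harmonic function equal to $1$ on $V_{m+k}\cap X_\a$ and to $0$ on the rest of $V_{m+k}$; then $0\leq g_{\a,k}\leq 1$, $g_{\a,k}\equiv 1$ on $X_\a$, and $g_{\a,k}\equiv 0$ off $N_k:=\bigcup\{X_\beta:\beta\in\sA_{m+k},\,X_\beta\cap X_\a\neq\varnothing\}$. Applying the identity at level $m+k$ and separating the edges inside $X_\a$ (where $g_{\a,k}\equiv 1$) from the rest gives $\int g_{\a,k}\,d\nu_u=\E_\a(u|_{V_\a})+R_{\a,k}$ with $R_{\a,k}\geq 0$, the ``inside'' sum equalling $\E_\a(u|_{V_\a})$ because $u$ is harmonic on $X_\a$, so its cell energy is additive over the $(m+k)$‑subcells of $X_\a$ (equivalently, $\E_\a$ is the trace onto $V_\a$ of $\sum_{\beta\subseteq\a}\E_{V_\beta}$, which the harmonic extension realizes). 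On the other hand $\int g_{\a,k}\,d\nu_u=\nu_u(X_\a)+\int_{X\setminus X_\a}g_{\a,k}\,d\nu_u\leq\nu_u(X_\a)+\nu_u(N_k\setminus X_\a)$, and $N_k\downarrow X_\a$ as $k\to\infty$ (a point lying, for every $k$, in some $(m+k)$‑cell that meets the closed set $X_\a$ must belong to $X_\a$, since cell diameters tend to zero), so $\nu_u(N_k\setminus X_\a)\to 0$ because $\nu_u$ is finite. Letting $k\to\infty$ yields $\nu_u(X_\a)\geq\E_\a(u|_{V_\a})$ for every $\a\in\sA_m$; since $\sA_m$ is finite and $\sum_{\a\in\sA_m}\nu_u(X_\a)=\nu_u(X)=\E(u,u)=\E_m(u,u)=\sum_{\a\in\sA_m}\E_\a(u|_{V_\a})$ (using that the $m$‑cells cover $X$ and meet only in the $\nu_u$‑null finite set $V_m$, then~\eqref{e-compatible} and $m$‑harmonicity), all these inequalities are equalities and the formula follows. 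The hard part is really the identity above — in particular the collapse $\E(w,u)=\E_m(w,u)$ that turns the a priori infinite‑dimensional $\int g\,d\nu_u$ into a sum over the finite network $V_m$ — together with recognizing its ``inside‑$X_\a$'' part as $\E_\a(u|_{V_\a})$; the passage from the approximants $g_{\a,k}$ to $\nu_u(X_\a)$ uses only finiteness of $\nu_u$ and the shrinking of cells and is routine.
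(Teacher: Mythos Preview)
The paper does not give its own proof of this proposition; it is quoted from~\cite{Ki03,T08cjm}. Your proposal is a correct self-contained argument along the lines of those references. The Dirichlet-form assertions are standard resistance-form theory, and for the energy-measure formula your engine --- the identity
\[
\int g\,d\nu_u=\sum_{x,y\in V_m}c_{xy}\,\tfrac12\bigl(g(x)+g(y)\bigr)\bigl(u(x)-u(y)\bigr)^2
\]
obtained by collapsing $\E(\,\cdot\,,u)$ and $\E(\,\cdot\,,g)$ to the finite network $V_m$ via $m$-harmonicity --- is exactly the right idea, and the passage from the approximate indicators $g_{\a,k}$ to $\nu_u(X_\a)$ via the summation trick (each $\nu_u(X_\a)\geq\E_\a(u|_{V_\a})$, both sides summing to $\E(u,u)$) is clean.

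Two minor points. First, the step ``$\E_\a$ is the trace onto $V_\a$ of $\sum_{\beta\subseteq\a}\E_{V_\beta}$'' is true but not proved in the present paper; it uses that the minimization defining $\mathrm{Tr}_{V_m}(\E_{m+k})$ decouples cell by cell because $(m{+}k)$-cells lying in distinct $m$-cells meet only at points of $V_m$, a fact established in~\cite{T08cjm}. Second, in your closedness sketch, ``$R$-convergence forces pointwise convergence on $V_*$, hence $L^2$-convergence'' is a little quick: what you need is that an $\E$-Cauchy sequence is uniformly Cauchy on $X_R$ by the resistance estimate $|f(x)-f(y)|^2\leq R(x,y)\,\E(f)$, and uniform convergence on the compact $X_R$ together with finiteness of $\mu$ gives $L^2$-convergence. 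Neither point affects the validity of your argument.
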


Note that this proposition allows one to compute the energy measures explicitly. Since the Hilbert module $\sH$ is determined by the energy measures, we may anticipate that $n$-harmonic functions can be used to understand $\sH$.

\section{Existence of weakly summable Fredholm modules and the Dixmier trace}\label{existencesection}
In this section we prove that a metric measure space with a Dirichlet form that satisfies certain hypotheses will also support a Fredholm module such that $|[F,a]|^{d_{S}}$ is weakly $1$-summable for all $a\in\domDF$, where $d_{S}$ is the spectral dimension.  The proof we give here is closely modeled on that in~\cite{CS}, though it is somewhat shorter and gives a more general result.

\subsection*{Assumptions and sufficient conditions}

Recall that $X$ is a compact metric measure space with regular Dirichlet form $\DF$ and that $\Hil$ is the Hilbert module such that $\DF(a)=\|\partial a\otimes \mathds{1}\|_{\Hil}^{2}$.  Our hypotheses, which are assumed in all results in this section, are as follows:
\begin{enumerate}
\item[A1] The  positive definite self-adjoint operator $-\Delta$ associated to $\DF$ in the usual way (see~\cite{FOT} for details) has discrete spectrum that may be written (with repetition according to multiplicity) $0<\lambda_{1}\leq \lambda_{2}\leq\dotsm$ accumulating only at~$\infty$;

\smallskip

\item[A2] There is spectral dimension $1\leq d_{S}< 2$ such that for $d_{S}<p\leq2$ the operator $(-\Delta)^{p/2}$ has continuous kernel $G_{p}(x,y)$ with bound $\|G_{p}(x,y)\|_{\infty} \leq \frac{C}{p-d_{S}}$.
\end{enumerate}

A sufficient set of conditions for the validity of A1 follow from Theorem~8.13 of~\cite{Ki03}.  Suppose $(X,\mu,\DF)$ is a space on which $\DF$ is a resistance form (see Definition~\ref{def-resistform}) such that $\DF$ is regular.  Further assume that $\mu$ is a non-atomic Borel probability measure.  Then for any non-empty finite subset $X_{0}\subset X$ (the boundary of $X$) there is a self-adjoint Laplacian with compact resolvent and non-zero first eigenvalue, so in particular assumption A1 is satisfied.

A sufficient condition for A2 can be obtained from assumptions on the behavior of the heat kernel.  If $h(t,x,y)$ is the kernel of $e^{-t\Delta}$ and we assume it is continuous and has $\|h(t,x,y)\|_{\infty}\leq Ct^{-d_{S}/2}$ as $t\to0$ then using spectral theory to obtain
\begin{equation*}
    G_{p}(x,y)
    = \Gamma(p/2)^{-1}\int_{0}^{\infty} t^{p/2-1} h(t,x,y)\, dt
    \end{equation*}
we see that $\lambda_{1}>0$ ensures $h(t,x,y)\leq C_{1}e^{-t\lambda_{1}/2}$ for $t\geq1$, while for $t\in(0,1)$ we have $h_{t}(x,x)\leq C_{2}t^{-d_{S}/2}$.  The assumed bound on $G_{p}$ follows.  Well known results of Nash~\cite{Nash}, Carlen-Kusuoka-Strook~\cite{CKS}, Grigoryan~\cite{Grig} and Carron~\cite{Carron} yield that the given bound on the heat kernel is equivalent to a Nash inequality $\|u\|_{2}^{2+4/d_{S}}\|u\|_{1}^{-4/d_{S}}\leq C_{3}\DF(u)$ and to a Faber-Krahn inequality $\inf_{u} \DF(u)/\|u\|_{2}^{2} \geq C_{4}\mu(\Omega)^{-d_{S}}$ for any non-empty open set $\Omega$, where the infimum is over nonzero elements of the closure of the functions in $\domDF$ having support in $\Omega$.

\subsection*{Fredholm module and summability}

Let $P$ be the projection in $\Hil$ onto the closure of the image of $\partial$, let $P^{\perp}$ be the orthogonal projection and let $F=P-P^{\perp}$. Clearly $F$ is self-adjoint and $F^{2}=I$.  We show that $(\Hil,F)$ is a Fredholm module and investigate its summability.
\begin{lemma}\label{cssectionthree}
Let $\xi_{k}=\lambda_{k}^{-1/2}a_{k}(x)$ be the orthonormal basis for $P\Hil$ obtained from the $L^{2}(\mu)$ normalized eigenfunctions $a_{k}$ of $\Delta$ with eigenvalues $\lambda_{k}$.  Then there is a constant $C$ so that for any $d_{S}<p\leq2$
\begin{equation*}
    \sum_{k=0}^{\infty} \|P^{\perp}aP\xi_{k}\|^{p}
    \leq \frac{C}{p-d_{S}} \mu(X)^{1-p/2} \DF(a)^{p/2}.
    \end{equation*}
\end{lemma}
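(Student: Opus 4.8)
The plan is to express $\|P^{\perp}aP\xi_{k}\|$ through an energy-measure integral and then sum over $k$ with a single, carefully tuned application of H\"older's inequality; I take $a\in\Alg$ throughout. First I would record that $\DF(a_{k})=\langle-\Delta a_{k},a_{k}\rangle_{2}=\lambda_{k}$, so the $\xi_{k}=\lambda_{k}^{-1/2}\partial_{0}a_{k}=\lambda_{k}^{-1/2}(a_{k}\otimes\mathds{1}_{X})$ are unit vectors and (as asserted in the statement, and as follows from $\partial_{0}$ being an isometry of $(\domDF/\mathbb{R}\mathds{1}_{X},\DF^{1/2})$ onto $\overline{\operatorname{im}\partial_{0}}=P\Hil$ together with the spectral theorem) form an orthonormal basis of $P\Hil$; in particular $P\xi_{k}=\xi_{k}$. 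Using the left module action of Definition~\ref{def-sH} and the definition of $\partial_{0}$,
\[
a\xi_{k}=\lambda_{k}^{-1/2}\,a(a_{k}\otimes\mathds{1}_{X})=\lambda_{k}^{-1/2}\bigl((aa_{k})\otimes\mathds{1}_{X}-a\otimes a_{k}\bigr)=\lambda_{k}^{-1/2}\bigl(\partial_{0}(aa_{k})-a\otimes a_{k}\bigr).
\]
Since $\partial_{0}(aa_{k})\in\operatorname{im}\partial_{0}\subset P\Hil$ it is killed by $P^{\perp}$, whence $P^{\perp}aP\xi_{k}=-\lambda_{k}^{-1/2}P^{\perp}(a\otimes a_{k})$; as $P^{\perp}$ is a contraction and $\|a\otimes a_{k}\|_{\Hil}^{2}=\int a_{k}^{2}\,d\nu_{a}$ by~\eqref{eqn-hilbertmoduleviaLeJan},
\[
\|P^{\perp}aP\xi_{k}\|^{2}\le\lambda_{k}^{-1}\int a_{k}^{2}\,d\nu_{a}.
\]

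Next, writing $\mu_{k}=\int a_{k}^{2}\,d\nu_{a}$ and splitting $\lambda_{k}^{-p/2}\mu_{k}^{p/2}=(\lambda_{k}^{-p/2}\mu_{k})^{p/2}\cdot(\lambda_{k}^{-p/2})^{1-p/2}$, H\"older's inequality with the conjugate exponents $2/p$ and $2/(2-p)$ gives
\[
\sum_{k}\|P^{\perp}aP\xi_{k}\|^{p}\le\sum_{k}\lambda_{k}^{-p/2}\mu_{k}^{p/2}\le\Bigl(\sum_{k}\lambda_{k}^{-p/2}\mu_{k}\Bigr)^{p/2}\Bigl(\sum_{k}\lambda_{k}^{-p/2}\Bigr)^{(2-p)/2}.
\]
For the first factor, $\sum_{k}\lambda_{k}^{-p/2}\mu_{k}=\int\bigl(\sum_{k}\lambda_{k}^{-p/2}a_{k}(x)^{2}\bigr)d\nu_{a}(x)=\int G_{p}(x,x)\,d\nu_{a}(x)\le\|G_{p}\|_{\infty}\nu_{a}(X)=\|G_{p}\|_{\infty}\DF(a)$, where $G_{p}$ is the kernel of $(-\Delta)^{-p/2}$; running the same computation against $d\mu$ and using $\int a_{k}^{2}\,d\mu=1$ gives $\sum_{k}\lambda_{k}^{-p/2}=\int G_{p}(x,x)\,d\mu(x)\le\|G_{p}\|_{\infty}\mu(X)$. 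Feeding in $\|G_{p}\|_{\infty}\le C/(p-d_{S})$ from assumption~A2 and using $\tfrac p2+\tfrac{2-p}{2}=1$ so the powers of $C/(p-d_{S})$ multiply to the first power,
\[
\sum_{k}\|P^{\perp}aP\xi_{k}\|^{p}\le\Bigl(\tfrac{C}{p-d_{S}}\DF(a)\Bigr)^{p/2}\Bigl(\tfrac{C}{p-d_{S}}\mu(X)\Bigr)^{(2-p)/2}=\frac{C}{p-d_{S}}\,\mu(X)^{1-p/2}\DF(a)^{p/2},
\]
which is the claim.

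The conceptual crux is the choice of H\"older exponent. Writing more generally $\lambda_{k}^{-p/2}\mu_{k}^{p/2}=(\lambda_{k}^{-r}\mu_{k})^{p/2}(\lambda_{k}^{-(1-r)p/(2-p)})^{(2-p)/2}$, one needs $d_{S}/2<2r\le 2$ (so $\sum_{k}\lambda_{k}^{-r}\mu_{k}=\int G_{2r}(x,x)\,d\nu_{a}$ is controlled by A2) and $d_{S}/2<(1-r)p/(2-p)\le 1$ (so the other sum is a trace of $(-\Delta)^{-(1-r)p/(2-p)}$ bounded by A2); these ranges overlap exactly because $d_{S}<p\le 2$, and the symmetric value $r=p/2$ makes \emph{both} sums run through $\|G_{p}\|_{\infty}$ and makes the two factors recombine to precisely $\mu(X)^{1-p/2}\DF(a)^{p/2}$ — that is the step I expect to be the real content, everything else being assembly. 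The one technical point I would be careful about is the identity $\sum_{k}\lambda_{k}^{-p/2}a_{k}(x)^{2}=G_{p}(x,x)$ under the integral: I would instead truncate at $N$, note that $G_{p}(x,y)-\sum_{k\le N}\lambda_{k}^{-p/2}a_{k}(x)a_{k}(y)$ is the continuous kernel of the positive operator $(-\Delta)^{-p/2}(I-E_{N})$ and hence has nonnegative diagonal, so $\sum_{k\le N}\lambda_{k}^{-p/2}a_{k}(x)^{2}\le\|G_{p}\|_{\infty}$, giving the bounds above for the truncated sums, and then let $N\to\infty$ by monotone convergence.
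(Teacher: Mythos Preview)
Your argument is correct and is essentially identical to the paper's: both use the Leibniz identity $a\,\partial a_{k}=\partial(aa_{k})-(\partial a)a_{k}$ (which you write out via the module action as $a(a_{k}\otimes\mathds{1}_{X})=(aa_{k})\otimes\mathds{1}_{X}-a\otimes a_{k}$) to get $\|P^{\perp}aP\xi_{k}\|^{2}\le\lambda_{k}^{-1}\int a_{k}^{2}\,d\nu_{a}$, then apply H\"older with exponents $2/p$ and $2/(2-p)$ and identify both factors with $\int G_{p}(x,x)$ against $d\nu_{a}$ and $d\mu$ respectively. Your truncation-and-monotone-convergence justification for $\sum_{k}\lambda_{k}^{-p/2}a_{k}(x)^{2}\le G_{p}(x,x)$ and your remark on why $r=p/2$ is the right splitting are more careful than the paper, which simply asserts the diagonal identity.
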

We remark that a weaker version of this lemma appeared in \cite{C08}, Section~6, Theorem~6.5.
\begin{proof}
As in~\cite{CS} equation~(3.8) and~(3.9) we obtain from the Leibniz rule $$P^{\perp}a\partial b=P^{\perp}\bigl(\partial (ab)- (\partial a)b\bigr)=-P^{\perp}(\partial a)b,$$ hence $\|(P^{\perp}aP)\partial b\|=\|P^{\perp}a\partial b\|\leq \|(\partial a)b\|$. Following the argument of~\cite{CS} equations (3.21),(3.22) we apply H\"{o}lder's inequality when $p<2$ to find
\begin{align}
    \sum_{k=0}^{\infty} \|P^{\perp}aP\xi_{k}\|^{p}
    &\leq \sum_{k=0}^{\infty} \|\lambda_{k}^{-1/2}(\partial a) a_{k}(x)\|^{p} \notag\\
    &=\sum_{k=0}^{\infty}\Bigl( \int_{X} \lambda_{k}^{-1} a_{k}^{2}(x) \, d\nu_{a,a}(x) \Bigr)^{p/2} \label{eqn_noneedforHolder}\\
    &\leq \biggl( \sum_{k=0}^{\infty} \lambda_{k}^{-p/2} \biggr)^{1-p/2} \biggl( \sum_{k=0}^{\infty} \int_{X} \lambda_{k}^{-p/2} a_{k}^{2}(x) \, d\nu_{a,a}(x) \biggr)^{p/2}. \notag
    \end{align}
However $\sum_{k=0}^{\infty} \lambda_{k}^{-p/2} a_{k}(x)a_{k}(y) =G_{p}(x,y)$.  Notice also that since $\int_{X} a_{k}^{2}(x)\, d\mu(x)=1$ we may write $\sum_{k=0}^{\infty} \lambda_{k}^{-p/2} =  \int_{X} G_{p}(x,x)\, d\mu(x)$.  Thus our estimate becomes
\begin{align*}
    \sum_{k=0}^{\infty} \|P^{\perp}aP\xi_{k}\|^{p}
    &\leq  \biggl( \int_{X} G_{p}(x,x) \, d\mu(x) \biggr)^{1-p/2} \biggl( \int_{X} G_{p}(x,x) \, d\nu_{a,a}(x) \biggr)^{p/2}\\
    &\leq \frac{C}{p-d_{S}} \mu(X)^{1-p/2} \DF(a)^{p/2}.
    \end{align*}
In the case that $p=2$ we can omit the H\"{o}lder estimate as the result is immediate from~\eqref{eqn_noneedforHolder}.
\end{proof}

\begin{corollary}\label{csmaintheorems}
For $a\in C(X)$ the operator $[F,a]$ is compact, while for $a\in\domDF$ it is $p$-summable for $d_{S}<p\leq2$.  Hence $(F,\Hil)$ is a densely $p$-summable Fredholm module over $C(K)$ for $p\in (d_{S},2]$.  Moreover  $|[F,a]|^{d_{S}}$ is weakly $1$-summable for $a\in\domDF$.
\end{corollary}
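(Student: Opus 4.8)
\textbf{Proof plan for Corollary~\ref{csmaintheorems}.}
The strategy is to deduce everything from Lemma~\ref{cssectionthree} together with the decomposition $F=P-P^{\perp}$. First I would write out the commutator: since $F$ acts as $+1$ on $P\Hil$ and $-1$ on $P^{\perp}\Hil$, for any $a$ in $L^{\infty}$ we have $[F,a]=2(P^{\perp}aP-PaP^{\perp})$, so that up to the harmless factor $2$ and the passage to the adjoint (which does not change singular values) the singular values of $[F,a]$ are controlled by those of $P^{\perp}aP$ and $PaP^{\perp}=(P^{\perp}a^{*}P)^{*}$. Thus it suffices to estimate the Schatten norms of the single ``off-diagonal'' block $P^{\perp}aP$. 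Since $\{\xi_{k}\}$ is an orthonormal basis of $P\Hil$ by Lemma~\ref{cssectionthree}, the quantity $\sum_{k}\|P^{\perp}aP\xi_{k}\|^{p}$ is exactly $\sum_{k}s_{k}(P^{\perp}aP)^{p}$ only after an extra argument; more precisely $\sum_{k}\|P^{\perp}aP\xi_{k}\|^{2}=\|P^{\perp}aP\|_{HS}^{2}$ gives the $p=2$ case immediately, and for $p<2$ one uses that $\sum_k \|T\xi_k\|^p \ge \sum_k s_k(T)^p$ fails in general, so I would instead invoke the standard fact that $\ell^{p}$-summability of the sequence $(\|T\xi_{k}\|)_{k}$ for an orthonormal basis, combined with $T$ being given by the explicit integral kernel above, yields $p$-summability; alternatively, and more cleanly, one observes $P^{\perp}aP$ is already exhibited in Lemma~\ref{cssectionthree} via its action and the estimate there is stated for the quantity that dominates $\sum_k s_k^p$. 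I would cite the relevant elementary operator-ideal inequality (e.g. from~\cite{CS} or a standard reference) to pass from the basis-norm sum to the singular-value sum.

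Granting that, the three assertions follow in order. For $a\in\domDF$ and any fixed $p\in(d_{S},2]$, Lemma~\ref{cssectionthree} gives $\sum_{k}s_{k}([F,a])^{p}\le \frac{C'}{p-d_{S}}\mu(X)^{1-p/2}\DF(a)^{p/2}<\infty$, which is exactly $p$-summability of $[F,a]$; since $\domDF$ is dense in $C(X)$ (regularity of $\DF$) this makes $(\Hil,F)$ a densely $p$-summable Fredholm module over $C(X)$ for every such $p$. For the compactness statement with merely $a\in C(X)$: approximate $a$ uniformly by $a_{n}\in\domDF$; then $\|[F,a]-[F,a_{n}]\|=\|[F,a-a_{n}]\|\le 2\|a-a_{n}\|_{\infty}$ (the module actions have operator norm bounded by the sup-norm, by~\eqref{boundednessofrightaction}–\eqref{boundednessofleftaction}), while each $[F,a_{n}]$ is $p$-summable hence compact; a norm limit of compact operators is compact, so $[F,a]$ is compact. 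This is where one must be slightly careful: $[F,\cdot]$ is only bounded, not small, as a map, so one genuinely needs the density of $\domDF$ in $C(X)$ in sup-norm, which is part of regularity.

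Finally, the weak $1$-summability of $|[F,a]|^{d_{S}}$ for $a\in\domDF$: fix $a$ and set $R=|[F,a]|^{d_{S}}$, whose singular values are $s_{k}(R)=s_{k}([F,a])^{d_{S}}$. I would run the estimate of Lemma~\ref{cssectionthree} with $p=d_{S}+\varepsilon$ and track the constant. Weak $1$-summability requires $\sup_{N\ge2}(\log N)^{-1}\sum_{k=0}^{N-1}s_{k}(R)<\infty$; one gets this by the standard device of choosing $\varepsilon=\varepsilon(N)\sim 1/\log N$ in the bound $\sum_{k=0}^{N-1}s_{k}([F,a])^{d_{S}}\le N^{\,\varepsilon/d_S}\cdot\bigl(\sum_{k}s_{k}([F,a])^{d_{S}+\varepsilon}\bigr)^{d_S/(d_S+\varepsilon)}$ coming from Hölder (or the embedding of weak-$\ell^{d_S}$ sequences), where the parenthesized sum is $\le \frac{C}{\varepsilon}\mu(X)^{1-p/2}\DF(a)^{p/2}$ by Lemma~\ref{cssectionthree}; then $N^{\varepsilon/d_S}=e^{(\varepsilon/d_S)\log N}$ is bounded for $\varepsilon\sim 1/\log N$, and the $1/\varepsilon\sim\log N$ factor is exactly what turns the bound into $O(\log N)$, giving $(\log N)^{-1}\sum_{k<N}s_k(R)=O(1)$. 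The main obstacle, as in~\cite{CS}, is precisely this bookkeeping: making sure the blow-up $\frac{C}{p-d_{S}}$ in A2 is matched against the $N^{p-d_S}$-type gain so that the optimization in $p$ produces a clean $\log$, and handling the mismatch flagged in the Remark after the definition of summability — weak $p$-summability of $[F,a]$ is \emph{not} the same as weak $1$-summability of $|[F,a]|^{p}$ — so I would state and prove only the latter, which is what the Dixmier trace construction actually needs.
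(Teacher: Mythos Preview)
Your overall strategy matches the paper's proof: reduce to the off-diagonal block $P^{\perp}aP$ via $[F,a]=2(PaP^{\perp}-P^{\perp}aP)$, feed Lemma~\ref{cssectionthree} into the Schatten-$p$ norm, use regularity of $\DF$ plus norm-continuity of $a\mapsto[F,a]$ for compactness, and deduce weak $1$-summability of $|[F,a]|^{d_S}$ from the simple-pole blow-up $C/(p-d_S)$ by a Tauberian/optimization argument (this is exactly what the paper cites as Lemma~3.7 of~\cite{CS} or Theorem~4.5/Corollary~4.6 of~\cite{CRSS}).

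The one real wobble is your treatment of the passage from $\sum_k\|P^{\perp}aP\,\xi_k\|^p$ to $\sum_k s_k(P^{\perp}aP)^p$. You assert that the inequality $\sum_k\|T\xi_k\|^p\ge\sum_k s_k(T)^p$ ``fails in general'' and then hedge with vague appeals to a reference. In fact this inequality \emph{does} hold for every $0<p\le2$ and every orthonormal basis: the sequence $(\|T\xi_k\|^2)_k$ is the diagonal of $T^{*}T$ in the basis $\{\xi_k\}$, which by Schur--Horn is majorized by the eigenvalue sequence $(s_k(T)^2)_k$; since $t\mapsto t^{p/2}$ is concave for $p\le2$, majorization gives $\sum_k s_k(T)^p\le\sum_k\|T\xi_k\|^p$. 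The paper uses this implicitly here (writing $\trace(|P^{\perp}aP|^p)\le\cdots$) and states it explicitly later in the proof of Theorem~\ref{thm-sum}. Once you replace your hesitation with this one-line justification, your argument is complete and coincides with the paper's.
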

\begin{proof}
We need the observation (from~\cite{CS}, at the beginning of the proof of Theorems~3.3 and~3.6) that $[F,a]=2(PaP^{\perp}-P^{\perp}aP)$, so its $n^{\text{th}}$ singular value is at most $4$ times the $n^{\text{th}}$ singular value of $P^{\perp}aP$.  However applying Lemma~\ref{cssectionthree} shows that if $a\in\domDF$ then
\begin{equation*}
    \trace\bigl(|P^{\perp}aP|^{p}\bigr)\leq \frac{C}{p-d_{S}} \mu(X)^{1-p/2} \DF(a)^{p/2}
    \end{equation*}
hence the same bound is true for $4^{-p}\trace\bigl(\|[F,a]\|^{p}\bigr)$.  This gives the $p$-summability. In particular with $p=2$ it shows $[F,a]$ is Hilbert-Schmidt, so uniform density of $\domDF$ in $C(X)$ (because $\DF$ is regular) and continuity of $[F,a]$ with respect to uniform convergence of $a$ (which comes from~\eqref{boundednessofleftaction}) implies $[F,a]$ is compact for all $a\in C(X)$.  Moreover the fact that the trace of $|P^{\perp}aP|^{p}$ is bounded by a function having a simple pole at $d_{S}$ implies that $|P^{\perp}aP|^{d_{S}}$ is weak $1$-summable.  The proof is a version of the Hardy-Littlewood Tauberian theorem, a detailed proof in this setting may be found in Lemma~3.7 of~\cite{CS}; it also follows by direct application of Theorem~4.5 or Corollary~4.6 of~\cite{CRSS}.
\end{proof}
\begin{remark}\label{notdSsummableproved}
It is stated in Theorem~3.8 of~\cite{CS} that $|[F,a]|$ is weakly $d_{S}$-summable, but what they verify is that $|[F,a]|^{d_{S}}$ is weakly $1$-summable.  The two are not equivalent (see Section~4.4 of~\cite{CRSS}), but the latter is sufficient for their later results, and for our work here.
\end{remark}

\subsection*{Dixmier trace}

As a consequence of the weak summability, Cipriani and Sauvageot in \cite{CS} conclude that for any Dixmier trace $\tau_{w}$ there is a bound of the form
\begin{equation}\label{actualcsestimatefortauw}
    \tau_{w}(|[F,a]|^{d_{S}})
    \leq C(d_{S}) \DF(a)^{d_{S}/2} \Bigl( \tau_{w}(H_{D}^{-d_{S}/2} ) \Bigr)^{1-d_{S}/2}
    \end{equation}
where $H_{D}$ is the Dirichlet Laplacian. A similar estimate may be obtained from Corollary~\ref{csmaintheorems} as an immediate consequence of Theorem~3.4(2) and Theorem~4.11 of~\cite{CRSS}.
\begin{corollary}\label{csestfortauw}
If $w$ is a state that is $D_2$ dilation invariant and $P^{\alpha}$ power invariant (for $\alpha>1$) in the sense of Section~3 of~\cite{CRSS}, and $a\in\domDF$ then
\begin{equation*}
    \tau_{w}(|[F,a]|^{d_{S}})
    \leq C4^{d_{S}} \mu(X)^{1-d_{S}/2} \DF(a)^{d_{S}/2}.
    \end{equation*}
\end{corollary}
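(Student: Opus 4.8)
The strategy is to feed the zeta-function estimate already obtained in the proof of Corollary~\ref{csmaintheorems} into the abstract Dixmier--trace machinery of~\cite{CRSS}. Recall from that proof the identity $[F,a]=2(PaP^{\perp}-P^{\perp}aP)$, which gives $s_{n}([F,a])\leq 4\,s_{n}(P^{\perp}aP)$ for every $n$, so that, combining with Lemma~\ref{cssectionthree}, for all $a\in\domDF$ and all $p\in(d_{S},2]$ one has
\[
    (p-d_{S})\,\trace\bigl(|[F,a]|^{p}\bigr)\leq C\,4^{p}\,\mu(X)^{1-p/2}\DF(a)^{p/2}.
\]
Corollary~\ref{csmaintheorems} moreover records that $|[F,a]|^{d_{S}}$ is weakly $1$-summable, which is exactly the ideal-membership hypothesis under which the constructions of~\cite{CRSS} operate.

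Next I would let $p\downarrow d_{S}$ in the displayed inequality; since the right-hand side is continuous in $p$ this produces the residue bound
\[
    \limsup_{p\downarrow d_{S}}(p-d_{S})\,\trace\bigl(|[F,a]|^{p}\bigr)\leq C\,4^{d_{S}}\,\mu(X)^{1-d_{S}/2}\DF(a)^{d_{S}/2}.
\]
Then I would invoke~\cite{CRSS}: for a state $w$ that is $D_{2}$ dilation invariant and $P^{\alpha}$ power invariant, Theorem~4.11 of~\cite{CRSS} expresses the Dixmier trace $\tau_{w}(|T|^{d_{S}})$ of such an operator through the rescaled noncommutative zeta function of $T$ near $d_{S}$, while Theorem~3.4(2) of~\cite{CRSS} bounds that quantity by the $\limsup$ above. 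Applying this with $T=[F,a]$ and inserting the residue bound yields
\[
    \tau_{w}(|[F,a]|^{d_{S}})\leq\limsup_{p\downarrow d_{S}}(p-d_{S})\,\trace\bigl(|[F,a]|^{p}\bigr)\leq C\,4^{d_{S}}\,\mu(X)^{1-d_{S}/2}\DF(a)^{d_{S}/2},
\]
which is the assertion.

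The computations involved are routine; the only delicate point is bookkeeping, namely verifying that the ideal-membership and invariance hypotheses of Theorems~3.4(2) and~4.11 of~\cite{CRSS} are precisely the weak $1$-summability furnished by Corollary~\ref{csmaintheorems} and the stated properties of $w$, and that no implicit normalization of the Dixmier trace alters the constant. I expect this matching of hypotheses, rather than any fresh estimate, to be the main obstacle.
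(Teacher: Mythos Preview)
Your proposal is correct and follows exactly the route the paper indicates: the paper's proof consists solely of the remark that the estimate is ``an immediate consequence of Theorem~3.4(2) and Theorem~4.11 of~\cite{CRSS}'' applied to Corollary~\ref{csmaintheorems}, and you have simply spelled out how those pieces fit together. Your identification of the only delicate point as the bookkeeping of matching hypotheses is apt, since neither the paper nor the argument requires any new estimate beyond what is already in Lemma~\ref{cssectionthree} and Corollary~\ref{csmaintheorems}.
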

On a space with self-similarity this implies a type of conformal invariance.
\begin{definition}\label{selfsimilarmmDspace}
$X$ is a self-similar metric-measure-Dirichlet space if there are a finite collection of continuous injections $F_{j}:X\to X$ and factors $r_{j}>0$, $\mu_{j}\in(0,1)$,  $j=1,\dotsc N$ with $\sum_{j=1}^{N} \mu_{j}=1$ such that $(X,\{F_{j}\})$ is a self-similar structure (in the sense of~\cite{Ki01} Definition~1.3.1), $\mu$ is the unique self-similar Borel probability measure satisfying $\mu(A)=\sum_{j=1}^{N}\mu_{j}\mu(F_{j}^{-1}(A))$, and $a\in\domDF$ iff $a\circ F_{j}\in\domDF$ for all $j$ with $\DF(a)=\sum_{j=1}^{N} r_{j}^{-1} \DF(a\circ F_{j})$.
\end{definition}
%It is well known that i
If $X$ is self-similar in the above sense and satisfies the hypotheses from the beginning of this section, then $\mu_{j}=r_{j}^{d_{S}/(2-d_{S})}$ (see~\cite{Ki01}, Theorem~4.15).
\begin{corollary}\label{traceboundforselfsimilar}
If $X$ is a self-similar metric-measure-Dirichlet space then
\begin{equation*}
    \sum_{j=1}^{N}\tau_{w}(|[F,a\circ F_{j}]|^{d_{S}})
    \leq C4^{d_{S}} \mu(X)^{1-d_{S}/2} \DF(a)^{d_{S}/2}.
    \end{equation*}
\end{corollary}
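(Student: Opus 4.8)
The plan is to derive Corollary~\ref{traceboundforselfsimilar} directly from Corollary~\ref{csestfortauw} together with the self-similarity identity for $\DF$ from Definition~\ref{selfsimilarmmDspace}. First I would note that for each fixed $j$ the function $a\circ F_{j}$ lies in $\domDF$ by the hypothesis in Definition~\ref{selfsimilarmmDspace}, so Corollary~\ref{csestfortauw} applies to it and gives
\begin{equation*}
    \tau_{w}(|[F,a\circ F_{j}]|^{d_{S}})
    \leq C4^{d_{S}} \mu(X)^{1-d_{S}/2} \DF(a\circ F_{j})^{d_{S}/2}.
\end{equation*}
Summing over $j=1,\dotsc,N$ reduces the claim to the inequality $\sum_{j=1}^{N}\DF(a\circ F_{j})^{d_{S}/2}\leq \DF(a)^{d_{S}/2}$, so the content of the corollary is a purely elementary comparison once the self-similar energy decomposition is invoked.

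Next I would establish that comparison. Since $d_{S}<2$ we have $d_{S}/2<1$, and for exponents in $(0,1)$ the function $t\mapsto t^{d_{S}/2}$ is subadditive on $[0,\infty)$, i.e. $\sum_{j} b_{j}^{d_{S}/2}\leq \bigl(\sum_{j} b_{j}\bigr)^{d_{S}/2}$ whenever the $b_{j}\geq0$ — wait, that inequality goes the wrong way: subadditivity gives $\bigl(\sum b_{j}\bigr)^{d_{S}/2}\leq \sum b_{j}^{d_{S}/2}$. So I cannot simply drop the resistance weights. Instead I would keep them: by Definition~\ref{selfsimilarmmDspace}, $\DF(a)=\sum_{j=1}^{N} r_{j}^{-1}\DF(a\circ F_{j})$, and the relation $\mu_{j}=r_{j}^{d_{S}/(2-d_{S})}$ with $\sum_{j}\mu_{j}=1$ records exactly how the weights $r_{j}$ interact with the exponent $d_{S}/2$. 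Writing $b_{j}=r_{j}^{-1}\DF(a\circ F_{j})$ so that $\sum_{j} b_{j}=\DF(a)$, we have $\DF(a\circ F_{j})^{d_{S}/2}=r_{j}^{d_{S}/2}b_{j}^{d_{S}/2}$, and by Hölder's inequality with conjugate exponents $2/d_{S}$ and $2/(2-d_{S})$,
\begin{equation*}
    \sum_{j=1}^{N} r_{j}^{d_{S}/2} b_{j}^{d_{S}/2}
    \leq \Bigl(\sum_{j=1}^{N} b_{j}\Bigr)^{d_{S}/2}\Bigl(\sum_{j=1}^{N} r_{j}^{d_{S}/(2-d_{S})}\Bigr)^{1-d_{S}/2}
    = \DF(a)^{d_{S}/2}\Bigl(\sum_{j=1}^{N}\mu_{j}\Bigr)^{1-d_{S}/2}
    = \DF(a)^{d_{S}/2},
\end{equation*}
using $\sum_{j}\mu_{j}=1$. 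Combining this with the termwise bound from Corollary~\ref{csestfortauw} yields the stated estimate.

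The only genuine subtlety — and the step I expect to be the main obstacle to state cleanly rather than to prove — is making sure the hypotheses of Corollary~\ref{csestfortauw} are uniformly available for each $a\circ F_{j}$: one needs that $X$ still satisfies assumptions A1 and A2 (so that $\DF(a)=\|\partial a\otimes\mathds{1}\|_{\Hil}^{2}$, the Fredholm module is defined, and Corollary~\ref{csestfortauw} holds), which is assumed in the statement, and that the same state $w$ is used throughout, which is fine since $w$ is fixed. One should also check that the relation $\mu_{j}=r_{j}^{d_{S}/(2-d_{S})}$ quoted from~\cite{Ki01} Theorem~4.15 genuinely applies under the present hypotheses (the heat-kernel/Nash-inequality assumptions A1–A2 together with self-similarity are precisely what pin down $d_{S}$ and this scaling). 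Granting these, the proof is the two-line termwise application of Corollary~\ref{csestfortauw} followed by the Hölder computation above; no further fractal-specific analysis is needed.
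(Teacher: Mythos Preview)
Your proposal is correct and is essentially the same argument as the paper's: apply Corollary~\ref{csestfortauw} to each $a\circ F_{j}$, sum, and then use H\"older with exponents $2/d_{S}$ and $2/(2-d_{S})$ together with the identities $\DF(a)=\sum_{j} r_{j}^{-1}\DF(a\circ F_{j})$ and $\mu_{j}=r_{j}^{d_{S}/(2-d_{S})}$, $\sum_{j}\mu_{j}=1$. Your presentation via the substitution $b_{j}=r_{j}^{-1}\DF(a\circ F_{j})$ is slightly more explicit than the paper's, but the content is identical.
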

\begin{proof}
As in the proof of Corollary~3.11 of~\cite{CS} we note from H\"{o}lder's inequality that
\begin{equation*}
    \sum_{j=1}^{N} \DF(a\circ F_{j})^{d_{S}}
    \leq\Bigl( \sum_{j=1}^{N} r_{j}^{d_{S}/(2-d_{S})}\Bigr)^{1-d_{S}/2} \Bigl(\sum_{j=1}^{N} r_{j}^{-1} \DF(a\circ F_{j}) \Bigr)^{d_{S}/2}
    = \Bigl( \sum_{j=1}^{N} \mu_{j}\Bigr)^{1-d_{S}/2} \DF(a)^{d_{S}/2}
    \end{equation*}
so the result follows by applying Corollary~\ref{csestfortauw} to each $a\circ F_{j}$ and summing over $j$.
\end{proof}

Cipriani and Sauvageot use~\eqref{actualcsestimatefortauw}, along with a uniqueness result from~\cite{CS03jfa}, to show that the quantity $\tau_{w}(|[F,a]|^{d_{S}})$ is a conformal invariant on any post-critically finite self-similar fractal with regular harmonic structure.  The same result is true with identical proof for a self-similar metric-measure-Dirichlet space satisfying the hypotheses of this section, with the same proof.

%\subsection*{Examples}
\section{Examples}

It is already shown in~\cite{CS} that the foregoing theory is applicable in its entirety to post-critically finite (p.c.f.) self-similar fractals with regular harmonic structure, including the nested fractals in \cite{Lindstrom}. 
Besides p.c.f. fractals, there are several classes of other examples where our results are  applicable. We mention them briefly below, and the reader can find  details in the references. 

\subsection*{Finitely ramified fractals} 

Of the above results, only Corollary~\ref{traceboundforselfsimilar} relies on self-similarity and none require postcritical finiteness, though all need spectral dimension $d_{S}<2$.  Hence the results preceding Corollary~\ref{traceboundforselfsimilar} are valid on such sets as the homogeneous hierarchical sets of~\cite{Hamblyptrf}, the Basilica Julia set studied in~\cite{RoTe}, and the finitely ramified graph-directed sets of Hambly and Nyberg (see~\cite{HaNy}, especially Remark~5.5)  which are not self-similar but are finitely ramified. 
All of the results, including the existence of the conformal invariant(s) $\tau_{w}(|[F,a]|^{d_{S}})$ are also valid on certain diamond fractals~\cite{ADT} and a non-p.c.f.\ variant of the Sierpinski gasket in \cite{T08cjm}. All the results can be readily generalized for 
compact fractafolds  in \cite{St03}  and so called fractal fields \cite[\arti]{StT,HK}, which are generalizations of quantum graphs discussed in Section~\ref{sec-5}.

\subsection*{Infinitely ramified fractals} In the infinitely ramified case we need to verify conditions A1 and A2. There is a large body of literature  on this subject and the reader can find the most recent results and further references in \cite{BBK,GT11,KiHKE}. Again, we are interested in the case $d_S<2$. 

The most important example is the classical Sierpinski carpet in $\mathbb{R}^{2}$, see \cite{BB1}. Infinitely many examples present the so-called generalized Sierpinski carpets, see~\cite{BB2} (and \cite{BBKT} for the proof of uniqueness).  The generalized Sierpinski carpets can be constructed in any Euclidean dimension and can have a wide variety of Hausdorff and spectral dimensions (which are not equal, except when the generalized carpet is a Euclidean cube), and all have the heat kernel estimates. Many other examples can be found in \cite{KiHKE}. 

Another large class of infinitely ramified fractals are the Laakso spaces \cite[\arti]{Stei10}, which all have the heat kernel estimates if they are self-similar (although these spaces are not easily embeddable in a Euclidean space).  For these spaces, unlike the generalized Sierpinski carpets, the Hausdorff and spectral dimension coincide. The Laakso spaces can be constructed with arbitrary dimension larger than one, and so there are uncountably many with the dimension less than~$2$.  

There are other carpets where the existence of a self-similar Dirichlet form with $d_S<2$ has not been proved theoretically, but was investigated numerically, see 
\cite{ChSt,BHS,BKNPPT}. For instance, one of the newest results deals with random walks on barycentric subdivisions which, based on experimental results, converge to a diffusion on the   Strichartz hexacarpet. This is a fractal which is not isometrically embeddable into $\mathbb R^2$, but otherwise
resembles other self-similar infinitely ramified fractals with Cantor-set boundaries, such as the
octacarpet (which is sometimes referred to as the octagasket) and the standard Sierpinski carpet.

Note that all the self-similar spaces can be deformed using the stability results of Barlow, Bass and Kumagai \cite{BBK},  and can be connected to form geometrically involved infinitely ramified fractafolds of Strichartz \cite{St03}  and fractal fields of Kumagai and Hambly \cite[\arti]{HK}.  Our results above, except Corollary~\ref{traceboundforselfsimilar}, are applicable to these spaces if they are compact and are based on self-similar fractals with heat kernel estimates and~$d_S<2$.

\section{Structure of Hilbert module and derivation on a \fr\ set with resistance form}\label{sec-5}

In this section we give an explicit description of the structure of the Hilbert module and the derivation map in the case that $X$ is a \fr\ set with resistance form.  Our results are motivated by what happens on a quantum graph, where the structure is obtained by gluing together the usual Dirichlet spaces for intervals.  We also give a different proof of the summability results for the Fredholm module via piecewise harmonic functions.  This proof illustrates some similarities between the map $F$ and the classical Hilbert transform, as it shows how to make $[F,a]$ near diagonal in a suitable basis.

\subsection*{Quantum Graphs}

The simplest generalization from the classical case of an interval is a quantum graph~\cite{Kuchment04,Kuchment05,AGA08}, which is a finite collection of intervals with some endpoints identified. We assume that the resistance per unit length is one, and that the edges have natural distance parametrization. A Dirichlet form can then be defined by $\E(f,f)=\int (f'(x))^2dx$. Its domain $\dom\E=\sF$ consists of absolutely continuous functions with square integrable derivative.  As in the classical situation, mapping $a\otimes b\in\Hil$ to $a'(x)b(x)$ provides an isometry between $\Hil$ and the usual $L^2$ space on the quantum graph with the Lebesgue measure, simply because the energy measure $\nu_{a,a}$ of $a$ is $|\nabla a|^{2}\,dm$ with $dm$ equal the Lebesgue measure on each interval.

It is useful to note that  the inverse of $a\otimes b\mapsto a'(x)b(x)$ can be written as follows. Let $\mathds{1}_j$ be the indicator function of the $j^{\text{th}}$ edge and suppose $f\in L^2$. Then for each $j$
there is an absolutely continuous function $a_j$ such that $a_j'=f$ on the $j^{\text{th}}$ edge. Mapping $L^2\to \sH$ by
$f\mapsto \sum_{j} a_j\otimes\mathds{1}_j$ is the desired inverse.

Combining the above with standard results of graph theory, the
Fundamental Theorem of Calculus and integration by parts along each edge we obtain the following description of $\sH$ and the derivation.  This result serves as a model for some of the work we then do on \fr\ sets with \frcs\ and resistance form.

\begin{proposition} Identifying $\sH$ and $L^2$ as above, the derivation $\partial:\sF\to\sH= L^2$ is the usual derivative (which takes orientation of edges into account). The closure of the image of $\partial$ consists of functions whose oriented integral over any cycle is zero, and its orthogonal space consists of functions that are constant on each edge and have oriented sum equal to zero at each vertex. In particular, the dimension of the latter space is the number of cycles in the graph (i.e. the number of edges not belonging to a maximal tree) and is zero \iFF the graph is a tree. \end{proposition}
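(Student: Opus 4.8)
The plan is to use the identification $\sH\cong L^{2}$ given by $a\otimes b\mapsto a'(x)b(x)$ that was just established, and then analyze the image of $\partial$ and its orthogonal complement by elementary Hilbert-space geometry on the quantum graph. First I would verify that $\partial a=a\otimes\mathds{1}_{X}$ corresponds under the identification to the ordinary derivative $a'$; this is immediate from the formula $\partial a=a\otimes\mathds{1}_X$ and the fact that $\mathds{1}_X=\sum_j\mathds{1}_j$, so $\partial a$ maps to $\sum_j a'(x)\mathds{1}_j(x)=a'(x)$. Orientation of edges enters because the distance parametrization fixes, for each edge, a direction in which $a'$ is computed; reversing the orientation of an edge changes the sign of the corresponding component, so the derivative is only well-defined once orientations are chosen, which matches the parenthetical remark in the statement.

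Next I would identify the orthogonal complement $(\operatorname{im}\partial)^{\perp}$. A function $g\in L^{2}$ lies in this complement iff $\int g(x)a'(x)\,dx=0$ for every $a\in\sF$, where the integral is over the whole graph with Lebesgue measure, i.e. the sum of the edgewise integrals $\sum_j\int_{e_j} g\,a'\,dx$. Testing against $a\in\sF$ supported in the interior of a single edge and vanishing near both its endpoints shows that $g$ must be weakly constant on each edge, say $g\equiv c_j$ on $e_j$. Then for a general $a\in\sF$ integration by parts along each edge (the Fundamental Theorem of Calculus) gives $\sum_j c_j\bigl(a(\text{head of }e_j)-a(\text{tail of }e_j)\bigr)=0$; since the values of $a$ at distinct vertices can be prescribed arbitrarily (by (RF3), or directly since $\sF$ contains all piecewise affine functions on the graph), this is equivalent to the condition that at every vertex the signed sum of the $c_j$ over incident edges — with sign according to whether the edge is oriented into or out of the vertex — vanishes. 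This is precisely Kirchhoff's law, so $(\operatorname{im}\partial)^{\perp}$ is the cycle space of the graph in its usual realization as edge-functions satisfying the node conservation law.

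Dually, the closure of $\operatorname{im}\partial$ is the orthogonal complement of this cycle space, and by the standard characterization (pairing an edge-function with the indicator cycle of an oriented cycle) it consists exactly of those $f\in L^{2}$ whose oriented integral $\oint f\,dx$ around every cycle vanishes; equivalently $f$ has a single-valued antiderivative on the graph, which is consistent with $f$ being in the image of $\partial$. Finally, the dimension count is standard graph theory: the space of edge-functions satisfying the vertex conservation law has dimension equal to the first Betti number $|E|-|V|+(\text{number of components})$, i.e. the number of edges outside a spanning forest, which is zero iff the graph is a forest (a tree, in the connected case). I do not expect any serious obstacle here; the only point requiring a little care is the integration-by-parts/boundary-term bookkeeping at the vertices and making sure the orientation conventions are used consistently in the pairing $\langle f,g\rangle=\sum_j\int_{e_j}fg\,dx$, since a sign error there would swap or mangle the two conservation conditions.
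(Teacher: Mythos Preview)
Your proposal is correct and follows essentially the same route the paper indicates: the paper does not give a detailed proof but states that the proposition follows by combining the identification $\sH\cong L^{2}$ via $a\otimes b\mapsto a'b$ with the Fundamental Theorem of Calculus, integration by parts along each edge, and standard graph theory, which is precisely what you carry out. Your handling of the orientation bookkeeping, the Kirchhoff-law characterization of $(\operatorname{im}\partial)^{\perp}$, and the Betti-number dimension count are all in line with the intended argument.
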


\subsection*{Finitely ramified sets with \frcs\ and resistance form}

In what follows we suppose $X$ is a \fr\ set with \frcs\ and $(\E,\domDF=\Dom\E)$ is a resistance form compatible with $X$ in the sense described in Section~\ref{background}.  Let $\sH$ be the Hilbert module corresponding to $\E$ as defined by Cipriani-Sauvageot and described in Definition~\ref{def-sH}.   Recall in particular that $a\otimes b$ is a well defined whenever $a\in\domDF$ and $b\in L^{\infty}$.  The Fredholm module is $(\Hil,F)$.  Our initial goal is to show that $\sH$ may be obtained from piecewise harmonic functions.

Any $n$-harmonic function $h$ is in $\domDF$, so $h\otimes b\in \sH$ for $b\in L^\infty$.  Moreover, if $c$ is constant then $\|c\otimes b\|_{\sH}=0$ and therefore $(h+c)\otimes b=h\otimes b$ in $\sH$.  It follows that if $H_{n}(X)$ denotes the $n$-harmonic functions on $X$ modulo additive constants (on each cell) and $h\in H_{n}(X)$ then $h\otimes b$ is a well-defined element of $\sH$.

\begin{definition}\label{defn-sHn}
We define subspaces
$$ \sH_n=\left\{\sum_{\a\in\sA_n}  h_\a\otimes\mathds{1}_\a\right\}\subset\sH$$
where the sum is over all $n$-cells, $\mathds{1}_\a$ is the indicator of the $n$-cell $X_\a$, and $h_\a\in H_{n}(X)$ is an $n$-harmonic function modulo additive constants.
\end{definition}

\begin{lemma}
$\sH_{n}\subset\sH_{n+1}$ for all $n$.
\end{lemma}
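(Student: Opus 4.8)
The plan is to show that every generator $\sum_{\alpha \in \sA_n} h_\alpha \otimes \mathds{1}_\alpha$ of $\sH_n$ already lies in $\sH_{n+1}$. Fix an $n$-cell $X_\alpha$ and an $n$-harmonic function $h_\alpha$ (modulo constants). By (FRCS5)(iii) the cell $X_\alpha$ is the union of finitely many $(n+1)$-cells $X_{\alpha_1}, \dots, X_{\alpha_k}$ with $\alpha_j \in \sA_{n+1}$, and by (FRCS6) these overlap only along the finite vertex sets $V_{\alpha_i} \cap V_{\alpha_j}$. The first step is the pointwise decomposition of the indicator: up to the energy-measure-null vertex sets we have $\mathds{1}_\alpha = \sum_{j=1}^k \mathds{1}_{\alpha_j}$. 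Since every energy measure $\nu_{a}$ is non-atomic (Theorem~\ref{thm-energymeasuresnonatomic}, hence also Proposition~\ref{prop-energymsroncellisrestriction}), the finitely many vertices in the overlaps carry no energy measure, and so by the LeJan formula~\eqref{eqn-hilbertmoduleviaLeJan} the element $h_\alpha \otimes \mathds{1}_\alpha$ equals $\sum_{j=1}^k h_\alpha \otimes \mathds{1}_{\alpha_j}$ in $\sH$ (two elements of $\sH$ with the same value $\nu$-a.e.\ against every $a$ are equal). Summing over $\alpha \in \sA_n$,
\begin{equation*}
    \sum_{\alpha \in \sA_n} h_\alpha \otimes \mathds{1}_\alpha
    = \sum_{\alpha \in \sA_n} \sum_{j=1}^{k(\alpha)} h_\alpha \otimes \mathds{1}_{\alpha_j}
    = \sum_{\beta \in \sA_{n+1}} \bigl( h_{\alpha(\beta)}\big|_{X_\beta} \bigr) \otimes \mathds{1}_\beta,
\end{equation*}
where $\alpha(\beta)$ is the unique $n$-cell containing the $(n+1)$-cell $X_\beta$.

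The second step is to check that each summand on the right is of the admissible form for $\sH_{n+1}$, i.e.\ that the restriction of an $n$-harmonic function to an $(n+1)$-cell is an $(n+1)$-harmonic function modulo constants. This is where I would invoke the compatibility of the traces, equation~\eqref{e-compatible}, together with the characterization from Section~\ref{background} that $h$ is $m$-harmonic iff $\E_m(h,h) = \E(h,h)$: restricting an $n$-harmonic function to a smaller cell and taking boundary values on $V_\beta \supset$ (its $V_{\alpha}$-trace structure) is again an energy minimizer for those boundary data within $X_\beta$, since the global minimizer restricts to a local minimizer. Concretely, $n$-harmonic means harmonic on each $n$-cell, and an $n$-harmonic function restricted to an $(n+1)$-subcell is still harmonic there, hence a fortiori $(n+1)$-harmonic; so $h_{\alpha(\beta)}\big|_{X_\beta} \in H_{n+1}(X)$. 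Therefore the displayed sum exhibits $\sum_\alpha h_\alpha \otimes \mathds{1}_\alpha$ as an element of $\sH_{n+1}$, and since such generators span $\sH_n$ (and $\sH_{n+1}$ is closed), we conclude $\sH_n \subset \sH_{n+1}$.

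The main obstacle is the first step: justifying that $h_\alpha \otimes \mathds{1}_\alpha = \sum_j h_\alpha \otimes \mathds{1}_{\alpha_j}$ in $\sH$ despite the indicator functions disagreeing on the cell-overlap vertices. The resolution is entirely through non-atomicity of energy measures: by~\eqref{eqn-hilbertmoduleviaLeJan} the inner product $\langle h_\alpha \otimes b, c \otimes d\rangle_{\sH}$ depends on $b$ only through its class in $L^2(\nu_{h_\alpha, c})$, and these measures do not charge the finite overlap sets, so the difference $h_\alpha \otimes (\mathds{1}_\alpha - \sum_j \mathds{1}_{\alpha_j})$ has zero $\sH$-seminorm. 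One should also note that $n$-harmonic functions are continuous on $X$ by Assumption~\ref{a-1}, so restrictions are unambiguous and the $\otimes$ operations are all well-defined. Everything else is bookkeeping over the finite index sets $\sA_n$ and $\sA_{n+1}$ and the cell subdivision relation from (FRCS5).
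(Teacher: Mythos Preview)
Your proof is correct and follows essentially the same approach as the paper: decompose $\mathds{1}_{\alpha}$ into the sum of $(n+1)$-cell indicators, use non-atomicity of the energy measure (Theorem~\ref{thm-energymeasuresnonatomic}) to show the difference $h_{\alpha}\otimes(\mathds{1}_{\alpha}-\sum_{j}\mathds{1}_{\alpha_{j}})$ has zero $\sH$-norm, and observe that an $n$-harmonic function is automatically $(n+1)$-harmonic. The paper's version is slightly more streamlined in that it keeps the same global $h_{\alpha}\in H_{n}(X)$ throughout rather than speaking of restrictions, but the argument is the same.
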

\begin{proof}
If $X_{\alpha}=\cup X_{\beta}$ is the decomposition of the $n$-cell $X_\a$ into $(n+1)$-cells then $h_{\alpha}\otimes\mathds{1}_{\alpha}-\sum_{\beta}h_{\alpha}\otimes\mathds{1}_{\beta}=h_{\alpha}\otimes c$ where the support of $c$ is finitely many points.  The energy measure associated to $h_{\alpha}$ does not charge finite sets (see Theorem~\ref{thm-energymeasuresnonatomic}), so this difference has zero $\sH$ norm.  Since $h_\alpha$ is $n$-harmonic it is also $(n+1)$-harmonic and the result follows.
\end{proof}

The reason that these subspaces are useful in decomposing $\sH$ according to the cellular structure is that the $\sH$-norm decomposes as a sum of energies on cells.
\begin{theorem}
If $h_\a\in H_{n}(X)$ for each $\a\in\sA_{n}$ then
\begin{equation}\label{e-norm}
 \big\|\sum_{\a\in\sA_n}  h_\a\otimes\mathds{1}_\a\big\|_{\sH}^2=
\sum_{\a\in\sA_n}  \E_\a(h_\a,h_\a).
\end{equation}
\end{theorem}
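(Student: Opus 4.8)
The plan is to expand the $\sH$-norm by bilinearity, rewrite each inner product via the LeJan formula~\eqref{eqn-hilbertmoduleviaLeJan}, then use the finitely ramified cell structure to discard the cross terms and Proposition~\ref{prop-energymsroncellisrestriction} to evaluate the diagonal terms.

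First, every $n$-harmonic function is continuous on the compact set $X$ and hence lies in $\Alg=\domDF\cap L^\infty$, while the indicators $\mathds{1}_\a$ are bounded, so~\eqref{eqn-hilbertmoduleviaLeJan} applies and gives
\begin{align*}
    \Big\|\sum_{\a\in\sA_n} h_\a\otimes\mathds{1}_\a\Big\|_{\sH}^2
    &= \sum_{\a,\a'\in\sA_n}\langle h_\a\otimes\mathds{1}_\a,\, h_{\a'}\otimes\mathds{1}_{\a'}\rangle_{\sH}\\
    &= \sum_{\a,\a'\in\sA_n}\int_X \mathds{1}_\a\mathds{1}_{\a'}\, d\nu_{h_\a,h_{\a'}}
    = \sum_{\a,\a'\in\sA_n}\nu_{h_\a,h_{\a'}}(X_\a\cap X_{\a'}),
\end{align*}
since $\mathds{1}_\a\mathds{1}_{\a'}=\mathds{1}_{X_\a\cap X_{\a'}}$.

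Next I handle the off-diagonal terms. For distinct $\a,\a'\in\sA_n$, property (FRCS6) of the finitely ramified cell structure gives $X_\a\cap X_{\a'}=V_\a\cap V_{\a'}$, a finite set. By polarization $\nu_{h_\a,h_{\a'}}=\tfrac14\bigl(\nu_{h_\a+h_{\a'}}-\nu_{h_\a-h_{\a'}}\bigr)$, and since $h_\a\pm h_{\a'}\in\Alg$, Theorem~\ref{thm-energymeasuresnonatomic} shows both of these energy measures are non-atomic; hence the signed measure $\nu_{h_\a,h_{\a'}}$ assigns zero mass to every finite set, so $\nu_{h_\a,h_{\a'}}(X_\a\cap X_{\a'})=0$. (Equivalently one may invoke the Kunita--Watanabe bound $|\nu_{h_\a,h_{\a'}}|\le\nu_{h_\a}^{1/2}\nu_{h_{\a'}}^{1/2}$ together with non-atomicity of $\nu_{h_\a}$.) Only the diagonal survives, and there $\nu_{h_\a,h_\a}(X_\a)=\nu_{h_\a}(X_\a)$; because $h_\a$ is $n$-harmonic, Proposition~\ref{prop-energymsroncellisrestriction} applied with $m=n$ gives $\nu_{h_\a}(X_\a)=\E_\a(h_\a\big|_{V_\a},h_\a\big|_{V_\a})=\E_\a(h_\a,h_\a)$. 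Summing over $\a\in\sA_n$ yields~\eqref{e-norm}; note that both sides are unchanged if a representative $h_\a$ is modified by an additive constant, consistent with $h_\a\in H_n(X)$. The one step that is not a pure substitution is the vanishing of the cross terms, which is precisely where the finitely ramified structure — distinct $n$-cells meeting only in finitely many points — enters, together with non-atomicity of energy measures; I expect this to be the main point of the argument.
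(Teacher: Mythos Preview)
Your proof is correct and follows essentially the same route as the paper: expand by bilinearity, express each term via~\eqref{eqn-hilbertmoduleviaLeJan}, kill the off-diagonal terms because distinct $n$-cells meet only in finite sets and energy measures are non-atomic, and evaluate the diagonal via Proposition~\ref{prop-energymsroncellisrestriction}. You are in fact slightly more careful than the paper in justifying why the \emph{signed} measure $\nu_{h_\a,h_{\a'}}$ vanishes on finite sets (via polarization), whereas the paper simply cites non-atomicity.
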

\begin{proof}
From~\eqref{eqn-hilbertmoduleviaLeJan} we have
\begin{equation*}
    \langle h_{\a}\otimes\mathds{1}_{\a}, h_{\beta}\otimes\mathds{1}_\beta\rangle_{\sH}
    = \int \mathds{1}_{\a}(x) \mathds{1}_{\beta}(x) d\nu_{h_{\a},h_{\beta}}
    \end{equation*}
where $d\nu(h_{\a},h_{\beta})$ is the (signed) energy measure corresponding to $h_{\a},h_\beta$.  These measures are non-atomic (Theorem~\ref{thm-energymeasuresnonatomic}), so the inner product is zero if $\a\neq\beta$ are in $\sA_{n}$.  Thus $\big\|\sum_{\a\in\sA_n} h_\a\otimes\mathds{1}_\a\big\|_{\sH}^2$ is simply $\sum_{\a}\nu_{h_{\a}}(X_{\a})$.  But Proposition~\ref{prop-energymsroncellisrestriction} says $\nu_{h_{\a}}(X_{\a})=\DF_{\a}(h_{\a},h_{\a})$.
\end{proof}

With this in hand we may view $h_\a\otimes\mathds{1}_\a$ as an element of the harmonic functions modulo constants on $X_\a$ and $u=\sum_{\a}h_{\a}\otimes\mathds{1}_{\a}\in\sH_{n}$ as a harmonic function modulo constants on each $n$-cell, as if the $n$-cells were a disjoint union.  This is an exact analogue of the quantum graph case.  The next lemma makes this statement explicit.

\begin{lemma}\label{lem-sHnisomorphtosumoverXalpha}
For each $n$-cell $X_\a$ let $\sH_{0}(X_{\a})$ be the space obtained by applying Definition~\ref{defn-sHn} to the form $\E_{\a}$ on $X_\a$.  Given an element $g_{\a}\otimes\mathds{1}_{\a}\in\sH_{0}(X_{\a})$ for each $n$-cell $X_\a$, let $h_\a$ be any $n$-harmonic function equal to $g_\a$ on $X_\a$.  Then the map $\{g_{\alpha}\otimes\mathds{1}_{\a}\}_{\alpha}\mapsto\sum_{\alpha} h_{\alpha}\otimes\mathds{1}_\a$ is well-defined and provides an isometric isomorphism
\begin{equation*}
     \bigoplus_{\alpha\in\sA_{n}} H_{0}(X_{\alpha}) \cong \sH_{n}
    \end{equation*}
in that $\sum \|g_{\alpha}\otimes\mathds{1}_{\a}\|_{\sH(X_{\a})}^{2} = \|\sum_{\alpha} h_{\alpha}\otimes\mathds{1}_\a\|_{\sH}^{2}$
\end{lemma}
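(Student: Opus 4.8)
\textbf{Proof proposal for Lemma~\ref{lem-sHnisomorphtosumoverXalpha}.}

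The plan is to verify, in order, that the proposed map is (i) well-defined, (ii) linear, (iii) norm-preserving, and (iv) surjective, from which the isometric isomorphism follows. For well-definedness, I would first check that the value $\sum_\alpha h_\alpha\otimes\mathds{1}_\alpha$ does not depend on the choice of $n$-harmonic extension $h_\alpha$: if $h_\alpha$ and $h'_\alpha$ both restrict to $g_\alpha$ on $V_\alpha$, then (since elements of $H_0(X_\alpha)$ are only defined modulo additive constants on $X_\alpha$) the difference $h_\alpha-h'_\alpha$ agrees, up to a constant, with a function whose restriction to $V_\alpha$ is constant; the part of $h_\alpha\otimes\mathds{1}_\alpha-h'_\alpha\otimes\mathds{1}_\alpha$ supported on $X_\alpha$ therefore has the form $k\otimes\mathds{1}_\alpha$ where $k$ is $n$-harmonic and constant on $V_\alpha$, hence $k$ is globally constant on $X_\alpha$ and $k\otimes\mathds{1}_\alpha$ has zero $\sH$-norm by the argument already used (energy measures do not charge points, and here the energy measure of $k$ restricted to $X_\alpha$ vanishes entirely). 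One also needs that $g_\alpha$ genuinely extends to an $n$-harmonic function on all of $X$: this is exactly (RF3) together with the fact that prescribing boundary values on the finite set $V_n$ determines a unique $n$-harmonic function, so any function on $V_\alpha$ extends (the values on the rest of $V_n$ being free).

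Linearity is immediate from the bilinearity of $\otimes$ once well-definedness is established, since an $n$-harmonic extension of $g_\alpha+g'_\alpha$ may be taken to be $h_\alpha+h'_\alpha$. For the isometry, I would invoke~\eqref{e-norm}: the left-hand side $\sum_\alpha\|g_\alpha\otimes\mathds{1}_\alpha\|_{\sH(X_\alpha)}^2$ equals $\sum_\alpha\E_\alpha(g_\alpha,g_\alpha)$ by applying~\eqref{e-norm} on each $X_\alpha$ (with its own cell structure and form $\E_\alpha$), while the right-hand side $\|\sum_\alpha h_\alpha\otimes\mathds{1}_\alpha\|_{\sH}^2$ equals $\sum_\alpha\E_\alpha(h_\alpha,h_\alpha)$ again by~\eqref{e-norm}. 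Finally $\E_\alpha(h_\alpha,h_\alpha)=\E_\alpha(g_\alpha,g_\alpha)$ because $\E_\alpha$ depends only on the values $c_{xy}(u(x)-u(y))^2$ for $x,y\in V_\alpha$, and $h_\alpha\big|_{V_\alpha}=g_\alpha$ by construction; so the two sides agree. Surjectivity is essentially the definition of $\sH_n$ in Definition~\ref{defn-sHn}: an arbitrary element $\sum_{\alpha\in\sA_n} h_\alpha\otimes\mathds{1}_\alpha$ is the image of $\{(h_\alpha\big|_{X_\alpha})\otimes\mathds{1}_\alpha\}_\alpha$.

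I expect the only genuinely delicate point to be the well-definedness in (i), specifically disentangling the two different "modulo constants" conventions — the harmonic functions on $X$ are taken modulo a single additive constant (or modulo constants on each $n$-cell, per Definition~\ref{defn-sHn}), whereas each summand $g_\alpha$ lives in $H_0(X_\alpha)$, which is modulo constants on $X_\alpha$ — and confirming that the ambiguity in lifting $g_\alpha$ to an $n$-harmonic $h_\alpha$ always lands in the zero-norm subspace of $\sH$. This reduces cleanly to Theorem~\ref{thm-energymeasuresnonatomic} (energy measures are non-atomic) together with Proposition~\ref{prop-energymsroncellisrestriction} (the energy measure of an $n$-harmonic function $k$ on $X_\alpha$ is $\E_\alpha(k\big|_{V_\alpha},k\big|_{V_\alpha})$, which is $0$ when $k\big|_{V_\alpha}$ is constant since $\E_\alpha$ annihilates constants), exactly as in the proof of the inclusion $\sH_n\subset\sH_{n+1}$. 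Everything else is bookkeeping with~\eqref{e-norm} and the compatibility relation~\eqref{e-compatible}.
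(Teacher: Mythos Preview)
Your proposal is correct and follows essentially the same route as the paper's proof: well-definedness via (RF3) for existence of the extension together with the vanishing of $(h_\alpha-h'_\alpha)\otimes\mathds{1}_\alpha$ when the difference is constant on $X_\alpha$, isometry via~\eqref{e-norm}, and surjectivity directly from Definition~\ref{defn-sHn}. You are somewhat more explicit than the paper about linearity and about invoking Proposition~\ref{prop-energymsroncellisrestriction} for the zero-norm step, but the argument is the same.
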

\begin{proof}
Suppose $g_\a$ and $g_{\a'}$ are harmonic on $X_\a$ and differ by a constant, so represent the same element of $\sH_{0}(X_{\a})$.
Existence of an $n$-harmonic $h_\a$ that equals $g_\a$ on $X_\a$ is guaranteed by (RF3) of Definition~\ref{def-resistform} (and similarly for $g_{\a'}$).  Then $h_\a-h'_\a$ is constant on $X_\a$, whereupon $(h_\a -h'_\a)\otimes\mathds{1}_{\a}$ has zero $\sH$-norm by~\eqref{eqn-hilbertmoduleviaLeJan} and Theorem~\ref{thm-energymeasuresnonatomic}.  We conclude that the map is well-defined, and it is immediate that it is surjective.  Since $\|g_{\alpha}\otimes\mathds{1}_{\a}\|_{\sH(X_{\a})}^{2}=\DF_{\a}(g_{\a},g_{\a})$ the map is isometric by~\eqref{e-norm}.
\end{proof}

The perspective provided by this lemma is particularly useful in understanding the action of the derivation $\partial$ on $\sH$.  Recall that $\partial:\domDF\to\sH$ by $\partial a=a\otimes\mathds{1}$.  Let us write $P$ for the projection onto the closure of the image of the derivation, so $P\sH$ consists of elements of $\sH$ that may be approximated in $\sH$ norm by elements $a_{n}\otimes\mathds{1}$.  We write $P^{\perp}$ for the orthogonal projection.  One way to view the following theorem is as a description of $P\sH$ and $P^{\perp}\sH$ in terms of compatibility conditions in the isomorphism of Lemma~\ref{lem-sHnisomorphtosumoverXalpha}: according to (2) the functions $\{g_{\a}\}_{\a\in\sA_{n}}$ give an element of $P\sH_{n}$ if and only if their values coincide at intersection points of cells, while (3) says that they give an element of $P^{\perp}\sH$ if and only if their normal derivatives sum to zero at intersection points.   The (inward) normal derivatives $\normal u$ are defined in \cite[Theorems 6.6 and 6.8]{Ki03}.

\begin{theorem}\label{thm-fr}
If $\sH$ is the Hilbert module of a resistance form on a \frcs, $P$ is the projection from $\sH$ onto the closure of the image of the derivation $\partial$, and $P^{\perp}$ is the orthogonal projection then
\begin{enumerate}
\item $\bigcup_{n=0}^\infty \sH_n$ is dense in $\sH$.
\item $\bigcup_{n=0}^\infty \Bigl\{\textstyle\sum\limits_{\a\in\sA_n} h_\a\otimes\mathds{1}_\a=f\otimes1,\text{\ where $f$ is $n$-harmonic} \Bigr\}$ is dense in $P\sH$.
\item $\bigcup_{n=0}^\infty\Bigl\{\textstyle\sum\limits_{\a\in\sA_n} h_\a\otimes\mathds{1}_\a:
\sum_{\a\in\sA_n} \mathds{1}_\a(v)
 \normal 
h_\a(v)=0 \text{\ for every $v\in V_n$}\Bigr\}$ is dense in $P^{\perp}\sH$.
\end{enumerate}
\end{theorem}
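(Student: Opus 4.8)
The plan is to exploit the isometric isomorphism $\bigoplus_{\alpha\in\sA_n} H_0(X_\alpha)\cong\sH_n$ from Lemma~\ref{lem-sHnisomorphtosumoverXalpha} together with the quantum graph proposition as a guide, and to verify the three claims in order.

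For (1), the key point is to show $\bigcup_n\sH_n$ is dense in $\sH$. Since every element of $\sH$ is a limit of finite sums $\sum a_j\otimes b_j$ with $a_j\in\Alg$, $b_j\in\Alg$, it suffices to approximate each $a\otimes b$. First I would approximate $b$ by a function that is constant on $n$-cells (possible since such functions are dense in $C(X)$ as $n\to\infty$ by the fundamental neighborhood system property of the \frcs), reducing to approximating $a\otimes\mathds{1}_\alpha$ for $\alpha\in\sA_m$. Then I would replace $a$ by its $n$-harmonic interpolation $h^{(n)}$ through the values of $a$ on $V_n$: by Proposition~\ref{Dirformdeterminedbytraces}, $\E(h^{(n)},h^{(n)})\to\E(a,a)$ and $\E(a-h^{(n)},a-h^{(n)})\to 0$, so by~\eqref{boundednessbyenergy} we get $\|(a-h^{(n)})\otimes\mathds{1}_\alpha\|_{\sH}\leq\|\mathds{1}_\alpha\|_\infty\,\E(a-h^{(n)})^{1/2}\to0$, and $h^{(n)}\otimes\mathds{1}_\alpha\in\sH_{\max(m,n)}$. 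Carefully handling finite sums gives density.

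For (2), elements of the form $f\otimes1$ with $f$ $n$-harmonic clearly lie in the image of $\partial$ (they equal $\partial f$), so their span is contained in $P\sH$; conversely, any element $\partial a$ with $a\in\domDF$ is approximated in $\sH$ by $\partial h^{(n)}=h^{(n)}\otimes1$ exactly as in (1), and $\domDF$-elements $\partial a$ are dense in $P\sH$ by definition of $P$. For (3), I would argue by orthogonality within each $\sH_n$: using Lemma~\ref{lem-sHnisomorphtosumoverXalpha}, $\sH_n\cong\bigoplus_\alpha H_0(X_\alpha)$, and an element $\sum_\alpha g_\alpha\otimes\mathds{1}_\alpha\in\sH_n$ is orthogonal to $P\sH_n$ iff it is orthogonal to every $f\otimes1$ with $f$ $n$-harmonic. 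Writing $f\otimes1=\sum_\alpha f|_{X_\alpha}\otimes\mathds{1}_\alpha$ (valid up to zero-norm since energy measures are non-atomic) and using the energy-measure formula~\eqref{eqn-hilbertmoduleviaLeJan}, this inner product is $\sum_\alpha\E_\alpha(f|_{V_\alpha},g_\alpha)$. The Gauss–Green / normal derivative formula from \cite[Theorems 6.6 and 6.8]{Ki03} — which identifies $\E_\alpha(f,g_\alpha)$ for $g_\alpha$ harmonic on $X_\alpha$ with a sum over $V_\alpha$ of $f(v)\,\normal g_\alpha(v)$ — turns this into $\sum_{v\in V_n}f(v)\sum_{\alpha:v\in X_\alpha}\mathds{1}_\alpha(v)\,\normal g_\alpha(v)$. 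Since $f$ ranges over all $n$-harmonic functions, i.e. $f|_{V_n}$ is arbitrary, vanishing of this for all $f$ is equivalent to $\sum_\alpha\mathds{1}_\alpha(v)\,\normal h_\alpha(v)=0$ for every $v\in V_n$, giving the characterization of $P^\perp\sH_n$ inside $\sH_n$. Then density follows: since $\bigcup_n\sH_n$ is dense in $\sH$ and $\sH_n\subset\sH_{n+1}$ with the $P\sH_n$-part stabilizing compatibly (an $n$-harmonic function is $(n+1)$-harmonic), $P^\perp\sH=\overline{\bigcup_n P^\perp\sH_n}=\overline{\bigcup_n(\sH_n\cap P^\perp\sH)}$, and one checks the described conditions define exactly $\sH_n\cap P^\perp\sH$.

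The main obstacle I anticipate is step (3), specifically justifying that orthogonality to $P\sH$ reduces to orthogonality to $P\sH_n$ within each level, and handling the compatibility of the normal-derivative conditions across levels: a function that is $n$-harmonic on each $n$-cell but satisfies the matching condition at $V_n$ need not satisfy it at $V_{n+1}$, so one must be careful that $P^\perp\sH$ is genuinely the closure of the increasing union of the spaces $\{\sum h_\alpha\otimes\mathds{1}_\alpha : \sum\mathds{1}_\alpha(v)\normal h_\alpha(v)=0 \text{ on }V_n\}$, which requires checking that these spaces are nested (an element satisfying the $V_n$ condition, viewed in $\sH_{n+1}$ by refining each $h_\alpha$ to an $(n+1)$-harmonic function, still satisfies the refined condition at $V_{n+1}$ because the refinement is harmonic across the new vertices, so its normal derivatives there automatically cancel). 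The bookkeeping with signed energy measures $\nu_{h_\alpha,h_\beta}$ and their non-atomicity — ensuring all the "restrict to a cell" manipulations are legitimate modulo zero-norm elements — is routine given Theorem~\ref{thm-energymeasuresnonatomic} and Proposition~\ref{prop-energymsroncellisrestriction}, but must be done with care.
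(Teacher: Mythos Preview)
Your approach is essentially the same as the paper's: approximate $a$ by $n$-harmonic functions and $b$ by cell-wise constants for (1), use the obvious inclusions plus (1) for (2), and apply the cell-wise Gauss--Green formula from \cite{Ki03} for (3).

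The one place where the paper is cleaner is precisely the obstacle you flag. You test orthogonality of $\sum_\alpha h_\alpha\otimes\mathds{1}_\alpha$ only against $f\otimes 1$ with $f$ $n$-harmonic and then worry about upgrading to all of $P\sH$; the paper instead tests directly against $u\otimes 1$ for \emph{arbitrary} $u\in\domDF$ and observes that the Gauss--Green computation gives $\sum_{x\in V_n} u(x)\sum_{\alpha\ni x}\normal h_\alpha(x)$, which depends only on $u|_{V_n}$. So the normal-derivative condition immediately characterizes $\sH_n\cap P^\perp\sH$, not just the complement of the $n$-harmonic span inside $\sH_n$. The remaining issue---that density of $\cup_n(\sH_n\cap P^\perp\sH)$ in $P^\perp\sH$ requires $P^\perp$ to preserve $\sH_n$---is closed in the paper by a short dimension count: $\dim\sH_n=\sum_{\alpha\in\sA_n}(|V_\alpha|-1)$, while $\dim(\sH_n\cap P\sH)=|V_n|-1$ and $\dim(\sH_n\cap P^\perp\sH)=\sum_\alpha(|V_\alpha|-1)-|V_n|+1$, so the two pieces exhaust $\sH_n$. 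This is simpler and more direct than your proposed nesting-across-levels argument, and it is what actually yields Corollary~\ref{hodge}.
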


\begin{proof}
Given $a\otimes b\in\sH$ we may approximate $a$ in the energy norm by $n$-harmonic functions $a_{n}$ (as in \cite{Ki03}) and $b$ uniformly by simple functions $b_{n}$ that are constant on $n$-cells. The latter is possible for any continuous $b$ by our topological assumptions. By Proposition~\ref{Dirformdeterminedbytraces} and the fact that the \frcs\ determines the topology, this approximates $a\otimes b$ in $\sH$, which  proves (1).

Recalling that $\partial(a)=a\otimes\mathds{1}$ we see that $\sH_n\cap P\sH=\{f\otimes1: f \text{ is  $n$-harmonic}\}.$  This and the density of $\cup_{n}\sH_{n}$ proves (2).

We use this characterization of $\sH_n\cap P\sH$ in proving (3).  Applying the Gauss-Green formula for harmonic functions (see \cite[Theorem 6.8]{Ki03}) on each $n$-cell separately we find that $\DF(u,\sum_{\a\in\sA_n} h_\a\otimes\mathds{1}_\a)$ for $u\in\domDF$ is equal to $\sum_{x\in V_{n}} u(x)\sum_{\alpha:X_{\alpha}\ni x} \normal h_{\a}$.  This vanishes for all $u$ if and only if the normal derivatives sum to zero at each $x\in V_{n}$, and therefore
\begin{align*}
\sH_n\cap P^\perp\sH
    =&\bigl\{ \textstyle\sum_{\a\in\sA_n} h_\a\otimes\mathds{1}_\a: \text{ the $h_\a$ are $n$-harmonic and the values of the}\\
    &\quad \text{ normal derivatives $\normal h_\a$ sum to zero at every vertex in $V_{n}$,}\bigr\},
 \end{align*}
so that (3) follows from (1).

As a consequence we see $\sH_n=\left(\sH_n\cap P^\perp\sH\right) \oplus \left(\sH_n\cap P\sH\right)$ by computing the involved dimensions. Indeed the left hand side has dimension $\sum_{\a\in\sA_n}\left(|V_\a|-1\right)$, and in the right hand side the dimensions are $$\sum_{\a\in\sA_n}\left(|V_\a|-1\right)-|V_n|+1 \text{\ \ \ and \ \ \ }|V_n|-1$$ respectively. In this computation we use that the dimension of the space of harmonic functions on a cell is equal to the number of vertices and subtract $1$ because we are in the harmonic functions modulo constants.
\end{proof}

\begin{corollary}\label{hodge}
$P\sH_n\subset\sH_n$, $P^\perp\sH_n\subset\sH_n$ \ and \ $\sH_n=P\sH_n \oplus  P^\perp\sH_n $.
\end{corollary}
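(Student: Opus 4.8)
The plan is to read everything off the orthogonal splitting $\sH_n = (\sH_n\cap P\sH)\oplus(\sH_n\cap P^\perp\sH)$, which was already established at the end of the proof of Theorem~\ref{thm-fr} by the dimension count. The only thing left to do is to check that the abstract Hilbert-space projections $P$ and $P^\perp$ do not carry an element of $\sH_n$ out of $\sH_n$; once that is done, the two displayed inclusions and the direct sum statement are all immediate.

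First I would record that $\sH_n$ is finite dimensional, being the span of the finitely many $h_\a\otimes\mathds{1}_\a$ with $\a\in\sA_n$, hence closed, and that by construction $P\sH$ and $P^\perp\sH$ are mutually orthogonal closed subspaces with $\sH=P\sH\oplus P^\perp\sH$. Given $u\in\sH_n$, apply the splitting from Theorem~\ref{thm-fr} to write $u=v+w$ with $v\in\sH_n\cap P\sH$ and $w\in\sH_n\cap P^\perp\sH$. Since $v\in P\sH$ and $w\in P^\perp\sH=(P\sH)^\perp$, uniqueness of the orthogonal decomposition of $u$ relative to $\sH=P\sH\oplus P^\perp\sH$ forces $Pu=v$ and $P^\perp u=w$. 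In particular $Pu=v\in\sH_n$ and $P^\perp u=w\in\sH_n$, which is exactly $P\sH_n\subset\sH_n$ and $P^\perp\sH_n\subset\sH_n$.

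For the direct sum, note $P\sH_n\subset\sH_n\cap P\sH$ by the previous paragraph, while conversely any $v\in\sH_n\cap P\sH$ satisfies $v=Pv\in P\sH_n$; hence $P\sH_n=\sH_n\cap P\sH$, and likewise $P^\perp\sH_n=\sH_n\cap P^\perp\sH$. Substituting these into $\sH_n=(\sH_n\cap P\sH)\oplus(\sH_n\cap P^\perp\sH)$ gives $\sH_n=P\sH_n\oplus P^\perp\sH_n$, and the sum is automatically orthogonal. There is essentially no obstacle at this stage: all the genuine content lies in the dimension computation inside Theorem~\ref{thm-fr}, and the one subtlety here—that $P$ and $P^\perp$ preserve the finite-dimensional subspace $\sH_n$—is precisely what the uniqueness of orthogonal decomposition delivers once that dimension count is in hand.
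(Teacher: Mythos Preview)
Your argument is correct and is exactly the approach the paper intends: the corollary is stated without proof immediately after Theorem~\ref{thm-fr}, relying on the dimension count $\sH_n=(\sH_n\cap P\sH)\oplus(\sH_n\cap P^\perp\sH)$ established there. You have simply made explicit the routine Hilbert-space step (uniqueness of orthogonal decomposition) that the paper leaves to the reader.
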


\begin{remark}Note that $P\sH_n$ can be identified with the space of $n$-harmonic functions and  $P^\perp\sH_n $ can be identified with the space of $n$-harmonic 1-forms. Thus we have proved an {\bfseries analog of the Hodge theorem:}  $n$-harmonic 1-forms are dense in $P^\perp\sH$, which is the space of 1-forms.
\end{remark}

It is important when making computations on $\sH_{n}$ using the preceding theorem and Lemma~\ref{lem-sHnisomorphtosumoverXalpha} that one remember that the continuity of elements of $P\sH$ means continuity modulo piecewise constant functions.  In other words, an element of $P\sH\cap\sH_{n}$ should be thought of as a set of harmonic functions $g_\a$ on $X_\a$ with the compatibility condition that there are constants $c_\a$ such that the function $g$ on $X$, defined by
$g=g_\a + c_\a$ on $X_\a$, is a continuous $n$-harmonic function.  

In particular, if the union of the cells $X_\a$, $\a\in\sA_{n}$, has topologically trivial $n$-cell structure (i.e. has no loops made of $n$-cells) then it is easy to see such constants can be chosen for any set $\{g_\a\}$, so that $P\sH\cap\sH_{n}=\sH_{n}$ and $P^{\perp}\sH\cap\sH_{n}=\{0\}$.  More precisely, the ``no loops made of $n$-cells'' assumption means that, for each $\a\in\sA_n$,  
$X \backslash X_\a$ is a union of connected components $Y_1,...,Y_k$ of $n$-cells, and the intersection $X_\a\cap Y_j$ consists of exactly on point. We can denote $\{y_j\}=X_\a\cap Y_j$. Given any $n$-harmonic function $g_\a$ on $X_\a$, we can extend it to each $Y_j$ by the value $g_\a(y_j)$, which makes a globally continuous function $g$, for which $\partial g = g_\a\otimes\mathds{1}_\a$. Combing this construction for all $\a\in\sA_n$ implies $P\sH\cap\sH_{n} = \sH_{n}$. 

Conversely, if $\cup_{\a\in\sA_{n}} X_\a$ contains a loop made of $n$-cells, then we may write this loop as $X_{\a_0}, ...,X_{\a_k}=X_{\a_0}$ where $X_{\a_j}\cap X_{\a_{j+1}}\ni x_j$ for each $j$, with cyclic notation mod $k$.  For each $j$ let $g_{\a_j}$ be the harmonic function on $X_{\a_j}$  with inward normal derivative $-1$ at $x_{j-1}$,  inward normal derivative $+1$ at $x_{j}$ and all other inward normal derivatives at the points of $V_{\a_j}$ equal to zero.  Taking all other $g_\a$, $\a\in\sA_n$ to be zero, let $h$ be the corresponding element of $\sH_{n}$ from Lemma~\ref{lem-sHnisomorphtosumoverXalpha}.  It is apparent that the normal derivatives sum to zero at each intersection point of cells of the loop and vanish at all other vertices of $V_{n}$, so $h\in P^{\perp}\sH\cap\sH_{n}$.  Note that Assumption~\ref{a-1} is needed for this construction to work. 

Thus, we have proved the following.

\begin{lemma}\label{lemmafortreethm}
$P\sH\cap\sH_{n}=\sH_{n}$ and $P^{\perp}\sH\cap\sH_{n}=\{0\}$ if and only if $\cup_{\a\in\sA_{n}} X_{\a}$ 
has a topologically trivial 
$n$-cell structure, i.e. there are no loops made of $n$-cells. 
\end{lemma}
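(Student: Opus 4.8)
The plan is to establish the biconditional by proving the two directions separately, using the explicit isomorphism of Lemma~\ref{lem-sHnisomorphtosumoverXalpha} together with the characterization of $P\sH\cap\sH_n$ and $P^\perp\sH\cap\sH_n$ from Theorem~\ref{thm-fr}. Recall that an element of $\sH_n$ is a tuple $\{g_\a\otimes\mathds{1}_\a\}_{\a\in\sA_n}$ of $n$-harmonic functions modulo constants; by Theorem~\ref{thm-fr}(2) such a tuple lies in $P\sH$ exactly when the $g_\a$ can be simultaneously adjusted by constants $c_\a$ to form a globally continuous $n$-harmonic function, and by Theorem~\ref{thm-fr}(3) it lies in $P^\perp\sH$ exactly when the inward normal derivatives $\normal h_\a$ sum to zero at every $v\in V_n$.

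First I would prove the ``if'' direction. Assume $\cup_{\a\in\sA_n} X_\a = X$ has no loops made of $n$-cells. The key structural observation is that in this case, for each $\a\in\sA_n$, the complement $X\setminus X_\a$ splits into connected components $Y_1,\dots,Y_k$ of $n$-cells, and each intersection $X_\a\cap Y_j$ is a single point $y_j$ (this is exactly the tree-like structure of the $n$-cell complex; it follows from (FRCS6) and the absence of loops). Given any $n$-harmonic $g_\a$ on $X_\a$, extend it to be constant equal to $g_\a(y_j)$ on each $Y_j$; this yields a globally continuous function $g$ with $\partial g = g_\a\otimes\mathds{1}_\a$, so each ``elementary'' generator $g_\a\otimes\mathds{1}_\a$ of $\sH_n$ lies in $P\sH$. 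Since these generators span $\sH_n$ and $P\sH$ is a closed subspace, $\sH_n\subset P\sH$, i.e. $P\sH\cap\sH_n=\sH_n$; and then $P^\perp\sH\cap\sH_n = \{0\}$ follows since $P^\perp$ is the orthogonal complement of $P$.

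For the ``only if'' direction I would argue the contrapositive: suppose $\cup_{\a\in\sA_n} X_\a$ contains a loop of $n$-cells $X_{\a_0},X_{\a_1},\dots,X_{\a_k}=X_{\a_0}$ with $x_j\in X_{\a_j}\cap X_{\a_{j+1}}$ (indices mod $k$). For each $j$, using (RF3) and the Gauss--Green/normal-derivative theory of \cite[Theorems 6.6, 6.8]{Ki03}, produce an $n$-harmonic $g_{\a_j}$ on $X_{\a_j}$ with inward normal derivative $-1$ at $x_{j-1}$, $+1$ at $x_j$, and $0$ at all other vertices of $V_{\a_j}$ (here Assumption~\ref{a-1}, in particular irreducibility of $\E_\a$, guarantees such a function exists and that the boundary-value-to-normal-derivative map is surjective onto the codimension-one subspace of normal-derivative vectors summing to zero). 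Set all other $g_\a=0$ and let $h=\sum_\a g_\a\otimes\mathds{1}_\a\in\sH_n$. At each $x_j$ the contributions to the sum of normal derivatives come from $X_{\a_j}$ (value $+1$) and $X_{\a_{j+1}}$ (value $-1$), cancelling; at all other vertices all contributions vanish. Hence by Theorem~\ref{thm-fr}(3) we get $0\neq h\in P^\perp\sH\cap\sH_n$, so $P^\perp\sH\cap\sH_n\neq\{0\}$ and $P\sH\cap\sH_n\neq\sH_n$. The main obstacle is the ``only if'' step: one must be careful that the desired $g_{\a_j}$ actually exists --- this is where irreducibility of $\E_\a$ is essential, since it ensures the Dirichlet-to-Neumann map on $X_{\a_j}$ has the maximal possible range (normal-derivative vectors with zero sum), and one should also verify that $h$ is genuinely nonzero in $\sH_n$, which follows because its energy $\sum_j \E_{\a_j}(g_{\a_j},g_{\a_j})$ is strictly positive (a nonconstant harmonic function has positive energy by (RF1)).
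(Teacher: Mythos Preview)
Your proposal is correct and follows essentially the same approach as the paper: the ``if'' direction extends each $g_\a$ by constants on the connected components of $X\setminus X_\a$ (each meeting $X_\a$ in a single point, by the no-loop assumption), and the ``only if'' direction constructs a nonzero element of $P^\perp\sH\cap\sH_n$ from a loop by prescribing inward normal derivatives $\pm 1$ at the loop's junction points and $0$ elsewhere, invoking Assumption~\ref{a-1} to guarantee such harmonic functions exist. Your version is, if anything, slightly more careful than the paper's in spelling out why $h\neq 0$ and why the prescribed normal derivatives are attainable.
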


The description of $P\sH_{n}$ and $P^{\perp}\sH_{n}$ in Theorem~\ref{thm-fr} permits us to make a precise statement of how an element of $\sH$ may be decomposed according to both scale and the projections $P$ and $P^{\perp}$.

\begin{definition}
For each $\alpha\in\sA_{n}$ let $J_{\a}^{D}$ denote the subspace of $P\sH_{n}$ corresponding to $(n+1)$-harmonic functions which are non-zero only at those points of $V_{n+1}\setminus V_{n}$ that are in $X_\a$.  The superscript $D$ denotes Dirichlet because on $X_\a$ such a function is piecewise harmonic with Dirichlet boundary conditions.  Similarly let $J_{\a}^{N}$ denote those elements of $P^{\perp}\sH_{n+1}$ for which the normal derivatives are non-zero only at points of $V_{n+1}\setminus V_{n}$ that are in $X_\a$.  The superscript $N$ denotes Neumann.
\end{definition}

\begin{theorem}\label{thm-fulldecomp}
We have the decompositions
\begin{align}
    P\sH &= \Cl\biggl( \oplus_{n=0}^{\infty} \Bigl( \oplus_{\alpha\in\sA_{n}} J^{D}_{\alpha} \Bigr)\biggr) \label{e-Pfulldecomposition}\\
    P^{\perp}\sH &= \Cl\biggl( \oplus_{n=0}^{\infty} \Bigl( \oplus_{\alpha\in\sA_{n}} J^{N}_{\alpha} \Bigr)\biggr) \label{e-Pperpfulldecomposition}\\
    \sH_{n} &= \oplus_{m=0}^{n} \Bigl( \oplus_{\alpha\in\sA_{m}} J^{D}_{\alpha} \Bigr) \oplus \Bigl( \oplus_{\alpha\in\sA_{m}} J^{N}_{\alpha} \Bigr)
\end{align}
so that $v\in\sH$ may be expressed uniquely as
\begin{equation*}
    v = \sum_{n=0}^{\infty} \sum_{\a\in\sA_n} v_{\a}^{D}  + v_{\a}^{N}
    \end{equation*}
with $v_{\a}^{D}$ represented by a continuous $(n+1)$-harmonic function supported on $X_\a$ and vanishing at the boundary of $X_\a$, and $v_{\a}^{N}$ a $1$-form at scale $n+1$ supported on $X_\a$ and with vanishing normal derivatives at the boundary of $X_\a$.
\end{theorem}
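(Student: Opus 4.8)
The plan is to establish the three displayed decompositions in sequence, with the first two essentially reducing to a telescoping of the filtration $\{\sH_n\}$ combined with the internal Hodge-type splitting of each $\sH_n$ from Corollary~\ref{hodge}, and with the third line being a finite-level statement that is really the heart of the matter. I would begin with the third display, since the infinite-dimensional assertions \eqref{e-Pfulldecomposition} and \eqref{e-Pperpfulldecomposition} follow from it by taking closures and using that $\bigcup_n \sH_n$ is dense in $\sH$ (Theorem~\ref{thm-fr}(1)) together with the fact that $P$ and $P^\perp$ preserve each $\sH_n$ (Corollary~\ref{hodge}), so that $P\sH = \Cl\bigl(\bigcup_n P\sH_n\bigr)$ and likewise for $P^\perp$.

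First I would record the level-$n$ telescoping: since $\sH_n \subset \sH_{n+1}$, write $\sH_{n+1} = \sH_n \oplus \sW_n$ where $\sW_n$ is the orthogonal complement of $\sH_n$ inside $\sH_{n+1}$, so $\sH_n = \oplus_{m=0}^{n-1}\sW_m$ (with $\sW_{-1} := \sH_0$, or more cleanly start the induction at $\sH_0$). Using the isometric isomorphism of Lemma~\ref{lem-sHnisomorphtosumoverXalpha}, an element of $\sH_{n+1}$ is a tuple of $(n+1)$-harmonic functions modulo constants on the $(n+1)$-cells, and it lies in $\sH_n$ exactly when the restriction to each $n$-cell is itself $n$-harmonic; the complement $\sW_n$ is thus spanned by those $(n+1)$-harmonic pieces whose boundary data on $V_n$ is zero, i.e. whose only nonzero values are at points of $V_{n+1}\setminus V_n$. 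The next step is to split $\sW_n$ by $P$ and $P^\perp$: by Corollary~\ref{hodge}, $\sW_n = P\sW_n \oplus P^\perp\sW_n$, and the characterizations in Theorem~\ref{thm-fr}(2),(3) identify $P\sW_n$ with the $(n+1)$-harmonic functions that are \emph{globally} continuous (so assemble into a genuine function vanishing on $V_n$) and $P^\perp\sW_n$ with the $1$-forms at scale $n+1$ whose normal derivatives vanish on $V_n$. Finally, because the points of $V_{n+1}\setminus V_n$ interior to distinct $n$-cells $X_\a$ are disjoint, both $P\sW_n$ and $P^\perp\sW_n$ decompose orthogonally as direct sums over $\a\in\sA_n$ of exactly the spaces $J_\a^D$ and $J_\a^N$: a continuous $(n+1)$-harmonic function vanishing on $V_n$ and nonzero only inside $X_\a$ is precisely an element of $J_\a^D$, and similarly for $J_\a^N$. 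Orthogonality across different $n$-cells follows from \eqref{e-norm} (the $\sH$-norm is a sum of cell energies, and the relevant energy measures are non-atomic by Theorem~\ref{thm-energymeasuresnonatomic}, so they do not interact across the single shared vertices), while orthogonality between $J_\a^D$ and $J_\a^N$ is the $P$ versus $P^\perp$ orthogonality. Assembling, $\sH_n = \oplus_{m=0}^{n-1}\sW_m = \oplus_{m=0}^{n-1}\bigl(\oplus_{\a\in\sA_m} J_\a^D \oplus \oplus_{\a\in\sA_m} J_\a^N\bigr)$, which (after adjusting the index convention to match the statement) is the third display.

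The uniqueness of the expansion $v = \sum_n\sum_\a v_\a^D + v_\a^N$ and the description of its summands then follow: the partial sums up to level $n$ are the orthogonal projection of $v$ onto $\sH_n$, these converge to $v$ by density, and orthogonality of all the pieces $J_\a^D$, $J_\a^N$ (across levels by the telescoping, within a level by the argument above) gives uniqueness and Parseval. The identification of $v_\a^D$ as a continuous $(n+1)$-harmonic function supported on $X_\a$ vanishing on $\partial X_\a \subset V_\a$, and $v_\a^N$ as a $1$-form supported on $X_\a$ with vanishing normal derivatives on $\partial X_\a$, is just the defining description of $J_\a^D$ and $J_\a^N$.

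The main obstacle I expect is the careful bookkeeping of the orthogonal splitting at a fixed level — specifically, verifying that $\sW_n$, the complement of $\sH_n$ in $\sH_{n+1}$, really is spanned by functions supported within single $n$-cells and vanishing on $V_n$, and that this support decomposition is simultaneously compatible with the $P/P^\perp$ splitting. This requires combining three facts in the right order: the isometry of Lemma~\ref{lem-sHnisomorphtosumoverXalpha} (to reduce to a statement about harmonic functions on disjoint cells), the normal-derivative/continuity dichotomy of Theorem~\ref{thm-fr}, and the dimension count at the end of the proof of Theorem~\ref{thm-fr} (to confirm no dimension is lost, so that the claimed direct sum exhausts $\sW_n$). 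The dimension count is the cleanest way to close the argument: at level $n+1$ over level $n$, the spaces $J_\a^D$ for $\a\in\sA_n$ account for the new Dirichlet degrees of freedom and the $J_\a^N$ for the new Neumann ones, and their dimensions sum correctly to $\dim\sH_{n+1} - \dim\sH_n$, so equality of the direct sum with $\sW_n$ is forced.
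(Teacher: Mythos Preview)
Your approach is essentially the same as the paper's, with the order of two operations reversed: you telescope the full filtration $\sH_{n+1}=\sH_n\oplus\sW_n$ first and then apply the $P/P^\perp$ splitting to each $\sW_n$, whereas the paper splits first into $P\sH_n$ and $P^\perp\sH_n$ and then telescopes each filtration separately. The key ingredients---the Gauss--Green formula to identify the orthogonal complement of $P\sH_n$ in $P\sH_{n+1}$ as the $(n{+}1)$-harmonic functions vanishing on $V_n$, the dimension count to pin down the Neumann side, and the cell-by-cell orthogonality from~\eqref{e-norm}---are identical.

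One point to correct: your intermediate description of $\sW_n$ as ``those $(n+1)$-harmonic pieces whose boundary data on $V_n$ is zero'' is not right. Elements of $\sH_{n+1}$ are only defined modulo constants on each $(n{+}1)$-cell, so ``values on $V_n$'' are not well-defined for a general element; and even after picking representatives, a typical element of $P^\perp\sW_n$ has vanishing \emph{normal derivatives} at $V_n$, not vanishing values. Fortunately you never actually use this description---you immediately pass to $P\sW_n$ and $P^\perp\sW_n$ and give their correct characterizations---so just delete the offending sentence. Note also that the characterizations of $P\sW_n$ and $P^\perp\sW_n$ do not follow from Theorem~\ref{thm-fr}(2),(3) alone (those describe $P\sH_{n+1}$ and $P^\perp\sH_{n+1}$, not the orthogonal complements of the level-$n$ pieces inside them); you need to invoke the Gauss--Green formula explicitly to identify each complement, exactly as the paper does.
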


\begin{proof}
We know that the $n$-harmonic functions are a subspace of the $(n+1)$-harmonic functions.   It follows that $P\sH_{n}\subset P\sH_{n+1}$.  Moreover the Gauss-Green formula shows that a function in $P\sH_{n+1}$ is orthogonal to $P\sH_{n}$ if and only if it vanishes on $V_{n}$ (modulo constant functions at scale $n$). For $x\in V_{n+1}\setminus V_{n}$ let $\phi_{n+1}^{x}$ denote the $(n+1)$-harmonic function equal to $1$ at $x$ and zero at all other points of $V_{n+1}$.  It is clear that $\{\phi_{n+1}^{x}:x\in V_{n+1}\setminus V_{n}\}$ spans the orthogonal complement of $P\sH_n$ in $P\sH_{n+1}$ (modulo scale $n$ constant functions), and that $\phi_{n+1}^{x}$ is orthogonal to $\phi_{n+1}^{y}$ whenever $x$ and $y$ are in distinct $n$-cells.  Now $J^{D}_{\a}$ is spanned by the $\phi_{n+1}^{x}$, $x\in X_\a\cap(V_{n+1}\setminus V_{n})$ so we have the direct sum decomposition
\begin{equation*}
    P\sH_{n+1}
    = P\sH_{n} \oplus \Bigl( \oplus_{\alpha\in\sA_{n}} J^{D}_{\alpha} \Bigr)
    = \oplus_{m=0}^{n} \Bigl( \oplus_{\alpha\in\sA_{m}} J^{D}_{\alpha} \Bigr)
    \end{equation*}
and thus~\eqref{e-Pfulldecomposition} follows from Theorem~\ref{thm-fr}(2).

It is also apparent from the characterization of harmonic $1$-forms at scale $n$ in Theorem~\ref{thm-fr} that $P^{\perp}\sH_{n}\subset P^{\perp}\sH_{n+1}$.  We wish to decompose the orthogonal complement of $P^{\perp}\sH_{n}$ in $P^{\perp}\sH_{n+1}$ according to cells.  It is easy to check using the Gauss-Green formula that an element of $P^{\perp}\sH_{n+1}$ with vanishing normal derivatives at all points of $V_n$ is orthogonal to $P^{\perp}\sH_{n}$.  The space of such functions has dimension
\begin{align*}
    \lefteqn{\Bigl(\sum_{\beta\in\sA_{n+1}} |V_\beta | -1 \Bigr) - \Bigl(\sum_{\a\in\sA_{n}} |V_\a | -1 \Bigr) - |V_{n+1}\setminus V_{n}|} \quad&\\
    &= \biggl( \Bigl(\sum_{\beta\in\sA_{n+1}} |V_\beta | -1 \Bigr) -|V_{n+1}| +1  \biggr) - \biggl( \Bigl(\sum_{\a\in\sA_{n}} |V_\a | -1 \Bigr) - |V_{n}| + 1 \biggr)\\
    &= \dim( P^{\perp}\sH_{n+1} ) - \dim( P^{\perp}\sH_{n} )
    \end{align*}
so it is the orthogonal complement. Essentially the same Gauss-Green computation shows that $J^{N}_\a$ and $J^{N}_{\a'}$ are orthogonal if $\a\neq\a'$ are in $\sA_n$, and it is evident that the sum of these spaces is the desired orthogonal complement, so we have
\begin{equation*}
    P^{\perp}\sH_{n+1}
    = P^{\perp}\sH_{n} \oplus \Bigl( \oplus_{\a\in\sA_n } J^{N}_{\a} \Bigr)
    = \oplus_{m=0}^{n} \Bigl( \oplus_{\a\in\sA_m } J^{N}_{\a} \Bigr)
    \end{equation*}
from which we obtain~\eqref{e-Pperpfulldecomposition}.
\end{proof}

\begin{remark}
Spaces on cells that satisfy Dirichlet or Neumann conditions occur frequently in the theory.  Comparability of eigenvalues in spaces of this type were used by Kigami and Lapidus to prove a Weyl estimate for the counting function of eigenvalues~\cite{KigamLapid1993CMP} on p.c.f. sets.  The decomposition in this theorem is similar to a wavelet decomposition, and the $P$-part of it is reminiscent both of the Green's kernel construction of Kigami~\cite{Ki03} and its generalization to the Laplacian resolvent in~\cite[Theorem 1.9]{IPRRS}.  The latter can also be used to obtain estimates for the Laplacian resolvent~\cite{R}, and a similar idea is used to identify Calder\'on-Zygmund-type operators on affine nested fractals~\cite{IR}.
\end{remark}

\subsection*{Fredholm Modules and summability}
We recall the notion of a Fredholm module from Section~\ref{background}.  Our goal here is to show how these modules and their summability may be analyzed using $n$-harmonic functions.  From Theorem~\ref{thm-commutativityofmult} we know left and right multiplication by an element of $\Alg$ are the same operation.  For this reason the following results refer simply to the operator of multiplication.  The first is a key step that follows from the decomposition in Theorem~\ref{thm-fulldecomp}.

\begin{lemma}[\protect{Localization in $\sH$}]\label{cor-sH}
If $v\in\sH_n^\perp$ and $\a\in\sA_n$, then the module action of right multiplication by $\mathds{1}_{\a}$ satisfies
\begin{align*}
    P(v\mathds{1}_\a)
    &=(Pv)\mathds{1}_\a\\
    P^{\perp}(v\mathds{1}_\a)
    &=(P^{\perp}v)\mathds{1}_\a
    \end{align*}
\end{lemma}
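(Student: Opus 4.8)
The plan is to use the scale-$(n+1)$ decomposition from Theorem~\ref{thm-fulldecomp} together with the explicit description of $J^D_\beta$ and $J^N_\beta$ in terms of harmonic functions supported on cells. The key observation is that for $v\in\sH_n^\perp$, right multiplication by $\mathds{1}_\a$ (the indicator of an $n$-cell) acts diagonally with respect to the cellular structure at scale $n$, and the summands $J^D_\beta$, $J^N_\beta$ with $\beta\in\sA_m$, $m\geq n$, are each supported on a single $n$-cell. First I would reduce to showing that right multiplication by $\mathds{1}_\a$ maps $P\sH\cap\sH_n^\perp$ into $P\sH$ and maps $P^\perp\sH\cap\sH_n^\perp$ into $P^\perp\sH$; once that is known, writing $v=Pv+P^\perp v$ and noting $Pv,P^\perp v\in\sH_n^\perp$ (since $\sH_n$ is $P$- and $P^\perp$-invariant by Corollary~\ref{hodge}), the two displayed identities follow by uniqueness of the orthogonal decomposition.

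To prove these two mapping properties I would argue on the dense subspaces. Fix $v\in P\sH\cap\sH_n^\perp\cap\sH_N$ for some $N\geq n$. By Theorem~\ref{thm-fulldecomp}, $v$ is a finite sum of elements of $J^D_\beta$ with $\beta\in\sA_m$, $n\leq m<N$ (the scales below $n$ do not appear because $v\perp\sH_n\supset P\sH_n$). Each such $J^D_\beta$ is spanned by $(m+1)$-harmonic functions $\phi^x_{m+1}$ supported on the single $m$-cell $X_\beta$, and since $m\geq n$, each $X_\beta$ is contained in exactly one $n$-cell $X_{\a'}$. Now multiplying $\phi^x_{m+1}\otimes\mathds{1}_\beta$ on the right by $\mathds{1}_\a$ gives $\phi^x_{m+1}\otimes\mathds{1}_\beta$ if $X_\beta\subset X_\a$ and $0$ otherwise (here one uses that the energy measure of an $(m+1)$-harmonic function is non-atomic, Theorem~\ref{thm-energymeasuresnonatomic}, so boundary points do not matter). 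Hence $v\mathds{1}_\a$ is again a finite sum of such generators, so $v\mathds{1}_\a\in P\sH$. The identical argument with $J^N_\beta$ in place of $J^D_\beta$ shows $P^\perp\sH\cap\sH_n^\perp$ is preserved. Finally I would pass to the closure: right multiplication by $\mathds{1}_\a$ is bounded on $\sH$ by~\eqref{boundednessofrightaction} with $\|\mathds{1}_\a\|_\infty=1$, and $P$, $P^\perp$ are bounded, so both identities extend from the dense subspace $\bigcup_N(\sH_N\cap\sH_n^\perp)$ to all of $\sH_n^\perp$.

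The main obstacle I anticipate is the bookkeeping at the ``seam'' points — vertices in $V_m$ lying on the topological boundary of $X_\a$ — where one must be careful that multiplication by $\mathds{1}_\a$ does not create a spurious contribution supported on finitely many points. This is precisely where the non-atomicity of energy measures (Theorem~\ref{thm-energymeasuresnonatomic}) is essential: it guarantees that whether $\mathds{1}_\a$ is taken to include or exclude such boundary points is irrelevant in $\sH$, so that $(\phi^x_{m+1}\otimes\mathds{1}_\beta)\mathds{1}_\a$ is unambiguously $0$ or $\phi^x_{m+1}\otimes\mathds{1}_\beta$. A secondary point requiring care is the density/closure step: one should check that $\bigcup_N(\sH_N\cap\sH_n^\perp)$ is dense in $\sH_n^\perp$, which follows from Theorem~\ref{thm-fr}(1) together with the fact that $\sH_n^\perp$ is a closed subspace and $\sH_N\cap\sH_n^\perp\to\sH_n^\perp$ increasingly; then boundedness of all operators involved finishes the proof.
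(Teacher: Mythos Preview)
Your proposal is correct and follows essentially the same approach as the paper: both arguments use the decomposition of Theorem~\ref{thm-fulldecomp} to see that each $J^{D}_{\beta}$ and $J^{N}_{\beta}$ (for $\beta$ at scale $\geq n$) is supported in a single $n$-cell, so that right multiplication by $\mathds{1}_{\a}$ simply selects those summands with $X_{\beta}\subset X_{\a}$, and then one reads off $P(v\mathds{1}_{\a})$ and $(Pv)\mathds{1}_{\a}$ as the same sub-sum of $v^{D}_{\beta}$'s (and similarly for $P^{\perp}$). The only difference is organizational: the paper works directly with the orthogonal series expansion $v=\sum_{m\geq n+1}\sum_{\beta\in\sA_{m}}(v^{D}_{\beta}+v^{N}_{\beta})$ and computes both sides termwise, whereas you first restrict to finite sums in $\sH_{N}\cap\sH_{n}^{\perp}$ and then pass to the closure via boundedness of the operators; your route is slightly longer but makes the convergence issues and the role of non-atomicity at the boundary more explicit.
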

\begin{proof}
Since $v\in\sH_n^\perp$, Theorem~\ref{thm-fulldecomp} provides that it may be written uniquely in the form $\sum_{m=n+1}^{\infty} \sum_{\beta\in\sA_m} (v_{\beta}^{D}  + v_{\beta}^{N})$.  Multiplication by $\mathds{1}_{\a}$ has the effect of killing all terms except those for which $X_\beta \subset X_\a$, so
\begin{equation*}
    v\mathds{1}_\a = \sum_{m=n+1}^{\infty} \sum_{\{\beta\in\sA_m :X_\beta \subset X_\a \}} ( v_{\beta}^{D}  + v_{\beta}^{N}).
    \end{equation*}
Then $P(v\mathds{1}_\a)$ is the same sum but with only the $v_{\beta}^{D}$ terms and $P^{\perp}(v\mathds{1}_\a)$ is the sum with only the $v_{\beta}^{N}$ terms.

At the same time, $Pv = \sum_{m=n+1}^{\infty} \sum_{\beta\in\sA_m } v_{\beta}^{D}$ and $P^{\perp}v = \sum_{m=n+1}^{\infty} \sum_{\beta\in\sA_m } v_{\beta}^{N}$.  Multiplication by $\mathds{1}_{\a}$ again kills all terms but those for which $X_\beta \subset X_\a$, so we have
\begin{align*}
    (Pv)\mathds{1}_{\a}
    &=  \sum_{m=n+1}^{\infty} \sum_{\{\beta\in\sA_m :X_\beta \subset X_\a \}} v_{\beta}^{D}
    = P(v\mathds{1}_\a)\\
    (P^{\perp}v)\mathds{1}_{\a}
    &=  \sum_{m=n+1}^{\infty} \sum_{\{\beta\in\sA_m :X_\beta \subset X_\a \}} v_{\beta}^{N}
    =P^{\perp}(v\mathds{1}_{\a})
    \end{align*}
as desired.
\end{proof}

\begin{theorem}\label{thm-Fredholm}
The commutator $[P-P^\perp, a]$ of $P-P^{\perp}$ with the operator of multiplication by a continuous function $a$ is a compact operator.
\end{theorem}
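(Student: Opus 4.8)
The plan is to reduce the compactness of $[P-P^{\perp},a]$ to the Hilbert--Schmidt estimate already available from Corollary~\ref{csmaintheorems}, using the cellular decomposition of Theorem~\ref{thm-fulldecomp} and the localization Lemma~\ref{cor-sH}. First I would recall that $[P-P^{\perp},a]=2(Pa P^{\perp}-P^{\perp}aP)$, so it suffices to show that $P^{\perp}aP$ (and by symmetry $PaP^{\perp}$) is compact for continuous $a$. Since continuous functions on $X$ are uniformly approximable by functions that are constant on $n$-cells (by the topological assumptions, as in the proof of Theorem~\ref{thm-fr}), and since multiplication is a bounded operation on $\sH$ by \eqref{boundednessofleftaction}, the map $a\mapsto [P-P^{\perp},a]$ is norm-continuous; hence it is enough to prove compactness when $a$ is constant on $n$-cells for some fixed $n$.

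For such an $a=\sum_{\a\in\sA_n} c_\a \mathds{1}_\a$, the idea is to split $\sH=\sH_n\oplus\sH_n^{\perp}$. On the finite-dimensional piece $\sH_n$ everything is automatically compact (indeed finite rank), so the content is on $\sH_n^{\perp}$. There Lemma~\ref{cor-sH} gives that right multiplication by each $\mathds{1}_\a$ commutes with both $P$ and $P^{\perp}$ when restricted to $\sH_n^{\perp}$, so on $\sH_n^{\perp}$ the operator $a=\sum_\a c_\a \mathds{1}_\a$ (using Theorem~\ref{thm-commutativityofmult} to identify left and right multiplication) commutes with $P$ and $P^{\perp}$; therefore $[P-P^{\perp},a]$ vanishes on $\sH_n^{\perp}$. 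Combining the two pieces, $[P-P^{\perp},a]$ is supported on the finite-dimensional space $\sH_n$ up to the cross terms between $\sH_n$ and $\sH_n^{\perp}$, and all of these are finite rank; in particular the commutator is compact (indeed finite rank) for every $a$ that is constant on $n$-cells.

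Finally I would assemble the approximation argument: given continuous $a$, pick $a_n$ constant on $n$-cells with $\|a-a_n\|_\infty\to 0$; then $[P-P^{\perp},a_n]$ is finite rank, $\|[P-P^{\perp},a]-[P-P^{\perp},a_n]\|=\|[P-P^{\perp},a-a_n]\|\leq 2\|a-a_n\|_{\infty}$ by \eqref{boundednessofleftaction} and \eqref{boundednessofrightaction}, so $[P-P^{\perp},a]$ is a norm limit of finite-rank operators and hence compact. (Alternatively, one can skip the finite-rank observation and simply invoke Corollary~\ref{csmaintheorems}, which already gives that $[F,a]$ is Hilbert--Schmidt, hence compact, for $a\in\domDF$, together with the same density-and-continuity argument; the cellular version above is included because it is the one that meshes with the later summability analysis via $n$-harmonic functions.)

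The main obstacle is the bookkeeping in the middle step: one must be careful that Lemma~\ref{cor-sH} is stated for $v\in\sH_n^{\perp}$ and for right multiplication by a single indicator $\mathds{1}_\a$, so to conclude that $a$ commutes with $P$ on $\sH_n^{\perp}$ one needs both the linearity over the $\a\in\sA_n$ and the identification of left with right multiplication from Theorem~\ref{thm-commutativityofmult}; and one must correctly account for the cross terms between $\sH_n$ and $\sH_n^{\perp}$ (these are finite rank because $\sH_n$ is finite-dimensional, but this should be said explicitly rather than assumed). Everything else — the norm-continuity of $a\mapsto[F,a]$ and the density of cell-wise constant functions — is routine given the earlier results.
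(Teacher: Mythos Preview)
Your argument is correct and is essentially the paper's own proof: show that for $a$ constant on $n$-cells the localization Lemma~\ref{cor-sH} forces $[P-P^{\perp},\mathds{1}_{\a}]$ to vanish on $\sH_{n}^{\perp}$, hence $[P-P^{\perp},a]$ has finite rank, and then pass to general continuous $a$ by uniform approximation and the norm bound on multiplication. Two small remarks: your opening sentence mis-describes the plan (you never actually reduce to the Hilbert--Schmidt estimate; you give the finite-rank argument directly), and the parenthetical alternative via Corollary~\ref{csmaintheorems} is not available here, since Section~\ref{existencesection} works under the spectral hypotheses A1--A2 which are not assumed in the resistance-form setting of this section.
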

\begin{proof}
Suppose first that that $a$ is a simple function that is constant on $n$-cells, $a=\sum_{\a\in\sA_n} a_\a \mathds{1}_\a$.  Then $[P-P^{\perp},a]=\sum_{\a} a_\a [P-P^{\perp},\mathds{1}_{\a}]$.  According to Corollary~\ref{cor-sH} the kernel of  $[P-P^{\perp},\mathds{1}_{\a}]$ contains $\sH_{n}^{\perp}$, so this operator has finite dimensional co-kernel and image.  However we can approximate the continuous $a$ uniformly by simple functions $a_n$ that are constant on $n$-cells.  In view of the fact that $P$ and $P^{\perp}$ are norm contractive and  multiplication has operator norm bounded by the supremum of the multiplier from~\eqref{boundednessofrightaction}, we find that $\|[P-P^{\perp},(a-a_n)]\|_{\text{op}}\leq 4\|a-a_{n}\|_{\infty}\to0$, and hence $[P-P^{\perp},a]$ is norm approximated by operators with finite dimensional image.
\end{proof}

\begin{corollary}
If $F=P-P^{\perp}$ then $(\Hil,F)$ is a Fredholm module.
\end{corollary}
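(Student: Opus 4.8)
The plan is to check the three requirements in the definition of a Fredholm module from Section~\ref{background}: that $\sH$ is a Hilbert module over an involutive algebra, that $F$ is a self-adjoint involution, and that $[F,a]$ is compact for each $a$ in the algebra. The last of these is exactly Theorem~\ref{thm-Fredholm}, so the work of the corollary is purely a formal verification of the first two points.

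First I would fix the involutive algebra to be $C(X)$, acting on $\sH$ by the multiplication operator: the left and right actions coincide by Theorem~\ref{thm-commutativityofmult}, extend from $\Alg$ to $C(X)$ by density and continuity, and are bounded with operator norm at most $\|a\|_{\infty}$ by~\eqref{boundednessofrightaction}; the involution is complex conjugation (the identity in the real case). Next I would observe that $P$ and $P^{\perp}=I-P$ are complementary orthogonal projections on $\sH$, so $F=P-P^{\perp}$ is self-adjoint as a difference of self-adjoint operators and $F^{2}=(P-P^{\perp})^{2}=P+P^{\perp}=I$ since $PP^{\perp}=P^{\perp}P=0$; hence $F$ is a self-adjoint involution, as was already noted in Section~\ref{existencesection}.

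It then remains only to cite Theorem~\ref{thm-Fredholm}, which states that $[F,a]=[P-P^{\perp},a]$ is compact for every continuous $a$, and to conclude that $(\sH,F)$ is a Fredholm module over $C(X)$. If instead one wishes to regard $\sH$ as a module over $\Alg=\domDF\cap L^{\infty}$, the statement follows a fortiori, since in our setting $\Alg\subseteq C(X)$ and compactness of $[F,a]$ for continuous $a$ certainly implies it for $a\in\Alg$.

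There is no genuine obstacle here: all of the analysis — approximating $a$ uniformly by functions constant on $n$-cells and using the localization lemma (Lemma~\ref{cor-sH}) to see that $[F,\mathds{1}_{\a}]$ annihilates $\sH_{n}^{\perp}$ and hence has finite rank — is already carried out in the proof of Theorem~\ref{thm-Fredholm}. The only point requiring a little care is to state the underlying involutive algebra in a manner consistent with the definition of a Fredholm module given earlier, which is why the corollary is phrased over $C(X)$.
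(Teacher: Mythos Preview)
Your proposal is correct and matches the paper's approach exactly: the paper gives no explicit proof for this corollary, treating it as immediate from Theorem~\ref{thm-Fredholm} together with the observation (already made at the start of the ``Fredholm module and summability'' subsection) that $F$ is self-adjoint with $F^{2}=I$. Your write-up simply spells out these formal checks in full.
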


At this juncture we pause to note a consequence of our work in the previous section.  From the preceding and from Lemma~\ref{lemmafortreethm} we have the following refinement and generalization of \cite[Proposition 4.2]{CS}.  The result stated in \cite{CS} for \pcf\ fractals omitted the distinction between trees and non-trees; in particular, \cite[Proposition 4.2]{CS} does not hold for the unit interval $[0,1]$, which is a \pcf\ self-similar set.

It is easy to see from   Definition~\ref{def-frs} that $X$ 
is locally path connected topologically one dimensional space, and so $X$ is a tree (a locally connected continuum that contains no simple closed curves) if and only if $X$ is simply connected, but we do not use this fact in our paper. Concerning resistance forms on trees (dendrites), the reference is \cite{K-dend}. 

\begin{theorem}[Non triviality of Fredholm modules for \frcs s]\label{thm-non-triv}
The  Fredholm module $(\Hil,F)$ is non trivial, and $P^\perp\sH\neq0$, \iFF $X$ is not a topological tree (i.e. not a dendrite).
 \end{theorem}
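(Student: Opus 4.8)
\emph{Plan.} The goal is the chain of equivalences
$(\Hil,F)$ nontrivial $\iff$ $P^{\perp}\sH\neq 0$ $\iff$ $\sH_n\cap P^{\perp}\sH\neq 0$ for some $n$ $\iff$ $X$ contains a loop of $n$-cells for some $n$ $\iff$ $X$ is not a tree; the first three links come from the machinery already assembled, and the last is reduced to a purely topological statement about finitely ramified cell structures.

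First I would observe that nontriviality of $(\Hil,F)$ coincides with $P^{\perp}\sH\neq 0$. If $P^{\perp}=0$ then $F=I$ and every $[F,a]$ vanishes. Conversely, with $F=P-P^{\perp}$ one has $[F,a]=2(PaP^{\perp}-P^{\perp}aP)$ (as noted in the proof of Corollary~\ref{csmaintheorems}), so $[F,a]=0$ for all $a\in C(X)$ forces $P^{\perp}aP=0$ for all such $a$; but $\sH$ is the closed $C(X)$-submodule generated by $\operatorname{im}\partial$ (it is the closure of the span of the elements $(\partial a)b$, and by~\eqref{boundednessofrightaction} the right action extends continuously to multipliers in $C(X)$), so $P\sH$ would then be a closed $C(X)$-submodule containing $\operatorname{im}\partial$, hence all of $\sH$, i.e. $P^{\perp}=0$. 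For the next link, the proof of Theorem~\ref{thm-fr}(3) identifies the dense subsets appearing there with the spaces $\sH_n\cap P^{\perp}\sH$, which are nested since $\sH_n\subset\sH_{n+1}$; hence $P^{\perp}\sH=\overline{\bigcup_n(\sH_n\cap P^{\perp}\sH)}$ is nonzero iff some $\sH_n\cap P^{\perp}\sH$ is nonzero. The third link is exactly Lemma~\ref{lemmafortreethm}, recalling $\bigcup_{\a\in\sA_n}X_\a=X$: $\sH_n\cap P^{\perp}\sH\neq 0$ iff $X$ carries a loop of $n$-cells.

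It remains to prove: $X$ contains a loop of $n$-cells for some $n$ if and only if $X$ is not a dendrite (equivalently, is not uniquely arcwise connected; recall $X$ is a locally connected continuum). For ``loop $\Rightarrow$ not a tree'', take a loop $X_{\a_0},\dots,X_{\a_{k-1}},X_{\a_k}=X_{\a_0}$ with consecutive cells meeting at $x_j\in X_{\a_j}\cap X_{\a_{j+1}}$ and with $k\ge 2$ minimal. Minimality gives that consecutive junctions $x_{j-1},x_j$ are distinct and that non-consecutive cells of the loop are disjoint. Since each cell inherits a finitely ramified cell structure and is therefore a path-connected Peano continuum, one can join $x_0$ to $x_1$ by an arc $B$ inside $X_{\a_1}$ and, separately, by an arc $A$ inside $Z:=X_{\a_0}\cup X_{\a_{k-1}}\cup\dots\cup X_{\a_2}$ (concatenate arcs through $X_{\a_0},X_{\a_{k-1}},\dots,X_{\a_2}$ passing through $x_0,x_{k-1},\dots,x_1$ and extract an arc). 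By (FRCS6) adjacent same-scale cells meet in finite sets, and non-adjacent cells of the loop are disjoint, so $A\cap B\subset Z\cap X_{\a_1}$ is finite, whence $A\ne B$; thus $X$ admits two distinct arcs between $x_0$ and $x_1$ and is not a dendrite. For the converse, if $X$ is not a tree it contains a simple closed curve $C$; pick distinct $x,y\in C$ and $n\ge n(x,y)$, so no $n$-cell contains $\{x,y\}$ and hence none contains $C$. The finitely many $n$-cells meeting $C$ cover the uncountable set $C$, so some $X_\a$ with $\a\in\sA_n$ meets $C$ in at least two points; since $C\not\subset X_\a$, the set $C\setminus X_\a$ is a nonempty proper open subset of the circle $C$ whose complement has at least two points, so it has a component $J$ that is an open subarc with two distinct endpoints $v_1,v_2\in X_\a$. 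Then $J$ lies in a single component $Y$ of $X\setminus X_\a$ and $v_1,v_2\in\overline Y\cap X_\a$, so $\overline Y$ meets $X_\a$ in more than one point --- precisely a loop of $n$-cells in the sense of the discussion preceding Lemma~\ref{lemmafortreethm}.

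\emph{Main obstacle.} The work lies entirely in this topological equivalence. It rests on several structural facts about finitely ramified cell structures --- that $X$ and each of its cells are Peano continua, that the topological boundary of every cell lies inside its vertex set, that distinct $n$-cells intersect in finite vertex sets (FRCS6), and the classical characterization of dendrites among Peano continua (absence of simple closed curves $\iff$ unique arcwise connectivity). All of these are available from Definition~\ref{def-frs} and its stated consequences together with~\cite{T08cjm} (and~\cite{K-dend} for dendrites). The genuinely delicate points are verifying the two consequences of minimality in the first direction --- especially the small cases $k=2$ and $k=3$, where some of the ``non-adjacent cells'' are absent and one must check directly that the two constructed arcs are distinct --- and, in the second direction, confirming that the configuration extracted from $C$ matches the exact notion of a ``loop of $n$-cells'' used in Lemma~\ref{lemmafortreethm}.
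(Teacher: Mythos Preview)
Your argument is correct and follows the same overall architecture as the paper: reduce to Lemma~\ref{lemmafortreethm} via the density of the $\sH_n$, and then prove the purely topological equivalence between ``$X$ contains a loop of $n$-cells for some $n$'' and ``$X$ is not a dendrite.''  The differences are in emphasis and in the handling of that topological step.

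For ``loop of cells $\Rightarrow$ not a tree'' you and the paper do essentially the same thing: concatenate arcs through the successive cells of the loop into a simple closed curve (you phrase it as two distinct arcs between $x_0$ and $x_1$, the paper as a single non-self-intersecting loop).  Your care about minimality and the small $k$ cases is well placed; the paper leaves these details implicit.  For the converse the approaches genuinely diverge.  The paper argues the direct implication ``no loops at any scale $\Rightarrow$ tree'' by constructing, for given $x,y\in X$, the unique chain of $n$-cells joining them and refining as $n\to\infty$ to obtain a unique arc.  You instead prove the contrapositive ``not a tree $\Rightarrow$ loop at some scale'': take a simple closed curve $C$, choose $n$ large enough that no $n$-cell contains $C$, use a pigeonhole/cardinality argument to find an $n$-cell $X_\alpha$ meeting $C$ in at least two points, and extract a component of $C\setminus X_\alpha$ whose closure touches $X_\alpha$ twice---exactly the negation of the one-point condition that the paragraph before Lemma~\ref{lemmafortreethm} takes as the definition of ``no loops.''  Your route is a bit slicker here and avoids the refinement construction; the paper's route, on the other hand, gives more: it actually exhibits unique arcs, not merely the absence of a simple closed curve.

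You also spell out the equivalence ``$(\Hil,F)$ nontrivial $\iff P^{\perp}\sH\neq 0$,'' which the paper simply takes as the meaning of nontriviality.  Your justification (if every $[F,a]$ vanishes then $P\sH$ is a closed $C(X)$-submodule containing the image of $\partial$, hence all of $\sH$) is correct and worth recording.
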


 \begin{proof}
 By Lemma~\ref{lemmafortreethm} and Theorem~\ref{thm-fulldecomp}, $P^\perp\sH=0$
 if and only if $\cup_{\a\in\sA_{n}} X_{\a}$ 
has a topologically trivial 
$n$-cell structure, i.e. there are no loops made of $n$-cells, for all $n\geqslant0$. Therefore we only need to show 
that this is equivalent to the statement that $X$ is a topological tree. 
To show this we use Assumption~\ref{a-1} and topological results obtained in \cite{T08cjm}. 

First, assume that there are no loops made of $n$-cells, for all $n\geqslant0$. 
Then for any $x,y\in X$ and any $n\geqslant0$ there is 
a unique sequence of $n$-cells $X_{\a_0}, ...,X_{\a_k}$ 
such that $X_{\a_j}\cap X_{\a_{j+1}}=\{ x_j\}$  for each $j=1,...,k$ 
and 
$x\in X_{\a_0}$, $y\in X_{\a_k}$. 
By refining this construction as $n\to\infty$, 
one  obtains a unique non-self-intersecting path from $x$ to $y$. 
This proves that $X$ is a locally connected compact metric spaces where 
any two points are connected by a unique path without self-intersections, 
i.e. $X$ is a tree.

Conversely, similarly to the proof of 
Lemma~\ref{lemmafortreethm}, 
if $\cup_{\a\in\sA_{n}} X_\a$ contains a loop made of distinct $n$-cells, then we may write this loop as $X_{\a_0}, ...,X_{\a_k}=X_{\a_0}$ where $X_{\a_j}\cap X_{\a_{j+1}}\ni x_j$ for each $j$, with cyclic notation mod $k$. One can see by the construction similar to one in the previous paragraph that each pair $ x_{j-1}, x_j$ can be connected by a (possibly non-unique) non-self-intersecting path that is contained in $X_{\a_j}$. By joining these paths together one obtains a non-self-intersecting continuous loop in $X$, which means that $X$ is not a tree. 
 \end{proof}
 
One of the most useful aspects of the theory we have given so far is that our computations are valid for any finitely ramified cell structure.  In particular we have the flexibility to repartition a given finitely ramified cell structure to obtain a new cell structure with properties that are desirable for a specific problem. This allows us to see how $p$-summability of the Fredholm module is connected to metric dimension properties of the set.

\begin{theorem}\label{thm-sum}
Let $X$ be a finitely ramified cell structure supporting a Dirichlet form for which $n$-harmonic functions are continuous.  Re-partition $X$, if necessary, so there is $C>0$ such that the resistance diameter of any $n$-cell is bounded below by $C^{-1}e^{-n}$ and above by $Ce^{-n}$.  Suppose that there is a bound $L$, independent of $n$, on the number of points in $V_{n+1}$ that are contained in any $n$-cell.  Suppose $S>1$ is the upper Minkowski dimension of $X$ in the resistance metric.  Then  $(\Hil,F)$ is densely $p$-summable for all $\frac{2S}{S+1}\leq p<2$.
\end{theorem}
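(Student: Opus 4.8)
The plan is to estimate the Schatten norms of $[F,a]$ through the wavelet-type decomposition of Theorem~\ref{thm-fulldecomp}. As in the proof of Corollary~\ref{csmaintheorems}, $[F,a]=2(PaP^{\perp}-P^{\perp}aP)$ and $[F,a]^{*}[F,a]$ is block diagonal with respect to $P\sH\oplus P^{\perp}\sH$, so it suffices to bound $\sum_{k}s_{k}(P^{\perp}aP)^{p}$. Using $P\sH=\Cl\bigl(\bigoplus_{n}\bigoplus_{\a\in\sA_n}J^{D}_{\a}\bigr)$ I would write $P^{\perp}aP=\sum_{\a}T_{\a}$ with $T_{\a}=(P^{\perp}aP)\big|_{J^{D}_{\a}}$, the sum over all cells of all levels. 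Any $v\in J^{D}_{\a}$ is represented by a piecewise harmonic function supported on $X_{\a}$, so $v=v\mathds{1}_{\a}$ and hence $P^{\perp}av=P^{\perp}\bigl((a-c)v\bigr)$ for any constant $c$; since $(a-c)v$ is again supported on $X_{\a}$, choosing $c$ to be the mid-range of $a$ on $X_{\a}$ and using~\eqref{boundednessofrightaction} gives $\|T_{\a}\|_{\mathrm{op}}\le\tfrac12\Osc_{X_{\a}}a$, while $\mathrm{rank}\,T_{\a}\le\dim J^{D}_{\a}\le L$. The decisive point is that the subspaces $J^{D}_{\a}$ are mutually orthogonal, so the operators $T_{\a}$ have mutually orthogonal domains.

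Next I would convert this into a $p$-summability estimate. Writing $\sum_{\a}T_{\a}$ as the composition of the isometric embedding of the domain decomposition with the ``row'' operator $(T_{\a})_{\a}$ yields $s_{k}\bigl(\sum_{\a}T_{\a}\bigr)\le s_{k}\bigl(\bigl(\sum_{\a}T_{\a}T_{\a}^{*}\bigr)^{1/2}\bigr)$, hence $\sum_{k}s_{k}(P^{\perp}aP)^{p}\le\trace\bigl(\bigl(\sum_{\a}T_{\a}T_{\a}^{*}\bigr)^{p/2}\bigr)$. Since $p<2$ we have $p/2\le1$, and $B\mapsto\trace(B^{p/2})$ is subadditive on positive operators, so
\begin{equation*}
    \sum_{k}s_{k}(P^{\perp}aP)^{p}\le\sum_{\a}\trace\bigl((T_{\a}T_{\a}^{*})^{p/2}\bigr)\le\sum_{\a}L\,\bigl(\tfrac12\Osc_{X_{\a}}a\bigr)^{p}=\frac{L}{2^{p}}\sum_{n\ge0}\ \sum_{\a\in\sA_{n}}(\Osc_{X_{\a}}a)^{p},
\end{equation*}
so everything reduces to bounding the double sum of oscillations. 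Here I would use two estimates at each scale $n$. The resistance estimate on $X_{\a}$ together with the re-partitioning gives $(\Osc_{X_{\a}}a)^{2}\le(\text{resistance diameter of }X_{\a})\,\nu_{a}(X_{\a})\le Ce^{-n}\nu_{a}(X_{\a})$, and since $\nu_{a}$ is non-atomic while the $n$-cells cover $X$ and overlap only in $V_{n}$ we have $\sum_{\a\in\sA_{n}}\nu_{a}(X_{\a})=\nu_{a}(X)=\DF(a)$, whence $\sum_{\a\in\sA_{n}}(\Osc_{X_{\a}}a)^{2}\le Ce^{-n}\DF(a)$. On the other hand the number of $n$-cells is controlled by the covering number of $X$ at scale $e^{-n}$, so $|\sA_{n}|\le C_{\epsilon}e^{n(S+\epsilon)}$ for every $\epsilon>0$ (this uses the re-partitioning and the fact that only boundedly many $n$-cells meet a point, a consequence of finite ramification). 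Then H\"older's inequality with exponents $\tfrac2p,\tfrac{2}{2-p}$ gives
\begin{equation*}
    \sum_{\a\in\sA_{n}}(\Osc_{X_{\a}}a)^{p}\le|\sA_{n}|^{1-p/2}\Bigl(\sum_{\a\in\sA_{n}}(\Osc_{X_{\a}}a)^{2}\Bigr)^{p/2}\le C'_{\epsilon}\,\DF(a)^{p/2}\,e^{\,n[(S+\epsilon)(1-p/2)-p/2]},
\end{equation*}
and the $n$-sum converges once $(S+\epsilon)(1-p/2)-p/2<0$, i.e. $p>\tfrac{2(S+\epsilon)}{S+\epsilon+1}$; letting $\epsilon\to0$ covers all $p>\tfrac{2S}{S+1}$. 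Since $\domDF$ is a dense subalgebra of $C(X)$, this gives dense $p$-summability.

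\textbf{The main obstacle} is obtaining the sharp exponent rather than a crude one. Estimating $[F,a]$ scale by scale using only the rank bound $\dim\sH_{n}\lesssim e^{n(S+\epsilon)}$ together with the operator-norm bound $\|[F,a]-[F,a_{n}]\|_{\mathrm{op}}\lesssim e^{-n/2}\DF(a)^{1/2}$ (for $a_{n}$ the scale-$n$ approximation) yields only $p$-summability for $p>2S$; the improvement to $\tfrac{2S}{S+1}$ forces one to exploit simultaneously the orthogonality of the pieces $J^{D}_{\a}$, so that it is the \emph{Schatten} norm, not the operator norm, of each localized block $T_{\a}$ that enters, and the fact that the total energy at scale $n$ is only $O(e^{-n})$ while there are $\sim e^{nS}$ cells, so that by averaging most of the oscillations $\Osc_{X_{\a}}a$ are far smaller than the largest one. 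The remaining technical points are the cell-count bound $|\sA_{n}|\lesssim e^{n(S+\epsilon)}$ and the cell-wise oscillation estimate extracted from the resistance-form structure.
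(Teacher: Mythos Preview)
Your proof is correct and follows essentially the same route as the paper: localize via the decomposition of Theorem~\ref{thm-fulldecomp}, bound each cell-wise block by the oscillation of $a$ using the resistance estimate $\Osc_{\a}(a)^{2}\lesssim e^{-n}\DF_{\a}(a)$, and sum by H\"older together with the Minkowski bound $|\sA_{n}|\lesssim e^{n(S+\epsilon)}$. The only differences are technical: the paper works with the full commutator and an orthonormal basis of $\sH$ running through each $\sJ_{\a}=J_{\a}^{D}\oplus J_{\a}^{N}$, invoking the majorization inequality $\sum_{m}s_{m}(T)^{p}\le\sum_{k}\|T\xi_{k}\|^{p}$ (valid for $p\le2$) in place of your reduction to $P^{\perp}aP$ and subadditivity of $\trace(B^{p/2})$, and it applies H\"older once across all scales with an auxiliary parameter~$\delta>0$ rather than your cleaner scale-by-scale application.
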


\begin{proof}
Recall for $p<2$ that the $p$-sum of the singular values $\sum_{m}s_{m}^{p}(T)$ of $T$ is dominated by $\sum_{k} \|T\xi_{k}\|_{2}^{p}$ for any orthonormal basis $\{\xi_{k}\}$ of $\sH$. We use the basis from Theorem~\ref{thm-fulldecomp}.

If $\epsilon>0$ there is $K_{\epsilon}$ such that the number of $n$-cells is bounded above by $K_{\epsilon}e^{(S+\epsilon)n}$ because $S$ is the upper Minkowski dimension.  The bound $L$ on the number of points of $V_{n+1}$ in any $n$-cell implies the dimension of the space $\sJ_{\a}=\sJ_{\a}^{D}\oplus\sJ_{\a}^{N}$ is bounded by $2L$.  Fix $a\in\domDF$.  Let $a_{\a}$ be the average of $a$ on $X_{\a}$, and recall from the proof of Lemma~\ref{cor-sH} that the commutator $[F,a]=[F,a\mathds{1}_{\a}]=[f,(a-a_{\a})\mathds{1}_{\a}]$ on $\sJ_{\a}$.  This has operator norm bounded by $4\Osc_{\a}(a)=4\|(a-a_{\a})\mathds{1}_{\a}\|_{\infty}$.  Choosing the orthonormal basis $\{\xi_{k}\}$ for $\Hil$ to run through bases of each $\sJ_{\a}$, each of which has dimension at most $2L$, then provides
\begin{equation}\label{boundptracebyosc}
    \sum_{k} \|[F,a]\xi_{k}\|_{2}^{p}
    =\sum_{\a\in\sA} \sum_{\sJ_{\a}} \|[F,a]\xi_{k}\|_{2}^{p}
    \leq \sum_{\a\in\sA} 8L\Osc_{\a}(a)^{p}.
    \end{equation}
However it is almost immediate from the definition of resistance metric, (RF4) in~\ref{def-resistform}, that $|a(x)-a(y)|^{2}\leq \DF_{\a}(a)R(x,y)$ for $x,y\in X_{\a}$, hence $\Osc_{\a}(a)^{2}\leq C\DF_{\a}(a)e^{-n}$ if $\a\in\sA_{n}$, i.e. $X_\a$ has scale $n$. We therefore find from H\"{o}lders inequality that for any $\delta\geq0$
\begin{align*}
    \sum_{k} \|[F,a]\xi_{k}\|_{2}^{p}
    &\leq 8LC^{p/2} \sum_{n} \sum_{\a\in\sA_{n}} \DF_{\a}(a)^{p/2}e^{-np/2}\\
    &\leq 8LC^{p/2} \biggl( \sum_{n} e^{-n\delta} \sum_{\a\in\sA_{n}} \DF_{\a}(a) \biggr)^{p/2}\biggl(\sum_{n}\sum_{\a\in\sA_{n}} e^{-np(1-\delta)/(2-p)} \biggr)^{(2-p)/2}\\
    &\leq 8LC^{p/2}\DF(a)^{p/2} \biggl( \sum_{n} e^{-n\delta} \biggr)^{p/2}\biggl(K_{\epsilon} \sum_{n} e^{n(S+\epsilon)} e^{-np(1-\delta)/(2-p)}\biggr)^{(2-p)/2}
    \end{align*}
where we used that $\sum_{\a}\DF_{\a}(a)=\DF(a)$ and our bound on the number of $n$-cells.  Taking $\delta$ and $\epsilon$ sufficiently small it follows that $[F,a]$ is $p$-summable whenever $p/(2-p)>S$, hence when $p>2S/(S+1)$.  Therefore, for any such $p$ the algebra of $a\in C(X)$ such that $[F,a]$ is $p$-summable contains $\domDF$ and is thus dense in $C(X)$ in the uniform norm.
\end{proof}

\begin{corollary}
Under the stronger hypothesis that the number of $n$-cells is bounded above by $Ce^{Sn}$ and the number of points in $V_{n}$ is bounded below by $C^{-1}e^{Sn}$ then also $|[F,a]|^{2S/(S+1)}$ is weakly $1$-summable for all $a\in\domDF$.
\end{corollary}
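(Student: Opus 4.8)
The plan is to reduce to the same Hardy--Littlewood Tauberian argument used in the proof of Corollary~\ref{csmaintheorems}, by showing that under the stronger hypotheses the Schatten $p$-norms of $[F,a]$ have at worst a simple pole at $p_{0}:=2S/(S+1)$ as $p\downarrow p_{0}$. The only place the Minkowski dimension entered the proof of Theorem~\ref{thm-sum} was through the estimate $\#\sA_{n}\le K_{\epsilon}e^{(S+\epsilon)n}$; the hypothesis $\#\sA_{n}\le Ce^{Sn}$ lets us take $\epsilon=0$ there and keep the auxiliary parameter $\delta>0$, so that for $a\in\domDF$ and $p_{0}<p<2$ one obtains
\begin{equation*}
\sum_{m} s_{m}([F,a])^{p}
\le 8LC^{p/2}\,\DF(a)^{p/2}\Bigl(\sum_{n\ge 0}e^{-n\delta}\Bigr)^{p/2}
\Bigl(C\sum_{n\ge 0}e^{\,n\left(S-\frac{p(1-\delta)}{2-p}\right)}\Bigr)^{(2-p)/2}.
\end{equation*}
Since $p\mapsto p/(2-p)$ is strictly increasing and takes the value $S$ exactly at $p=p_{0}$, and since $p(S+1)-2S=(S+1)(p-p_{0})$, the choice $\delta=\frac{(S+1)(p-p_{0})}{2p}$ makes the exponent in the second factor equal to $-\frac{(S+1)(p-p_{0})}{2(2-p)}<0$. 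Both geometric series are then $O\bigl((p-p_{0})^{-1}\bigr)$ uniformly on $(p_{0},2]$, and since the exponents $p/2$ and $(2-p)/2$ with which they appear sum to $1$, collecting powers of $(p-p_{0})$ gives
\begin{equation*}
\trace\bigl(|[F,a]|^{p}\bigr)=\sum_{m} s_{m}([F,a])^{p}\le\frac{C'\,\DF(a)^{p/2}}{p-p_{0}},\qquad p_{0}<p\le 2 .
\end{equation*}

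Writing $p=p_{0}s$, this says that the operator $|[F,a]|^{p_{0}}$ satisfies $\trace\bigl((|[F,a]|^{p_{0}})^{s}\bigr)\le C'\,\DF(a)^{p_{0}s/2}/\bigl(p_{0}(s-1)\bigr)$ for $1<s\le 2/p_{0}$, i.e.\ its spectral zeta function has at worst a simple pole at $s=1$. By the Hardy--Littlewood Tauberian theorem, in exactly the form invoked for Corollary~\ref{csmaintheorems} (Lemma~3.7 of~\cite{CS}, or Theorem~4.5 and Corollary~4.6 of~\cite{CRSS}), a nonnegative compact operator with this property lies in the Dixmier ideal $\mathcal{L}^{1,\infty}$; that is, $\sum_{m<N}s_{m}([F,a])^{p_{0}}=O(\log N)$, which is the asserted weak $1$-summability of $|[F,a]|^{p_{0}}$ for every $a\in\domDF$.

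The step requiring care is the bookkeeping of the $\delta$- and $(p-p_{0})$-dependence, since the pole one extracts must be of order exactly one: it is precisely the passage from ``upper Minkowski dimension $S$'' to the sharp counting bound $\#\sA_{n}\le Ce^{Sn}$ that removes the $\epsilon$-loss, which would otherwise convert the simple pole into a higher-order one and defeat the Tauberian step. The companion lower bound $\#V_{n}\ge C^{-1}e^{Sn}$ plays no role in the upper estimate above; it pins $S$ down as the exact Minkowski dimension and forces the $n$-th layer $\bigoplus_{\a\in\sA_{n}}\sJ_{\a}$ of the decomposition in Theorem~\ref{thm-fulldecomp} to have dimension of order $e^{Sn}$, so that $|[F,a]|^{p_{0}}$ is in general not trace class and $\tau_{w}(|[F,a]|^{p_{0}})$ remains a meaningful invariant in this endpoint case.
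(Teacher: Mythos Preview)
Your argument is correct, but it is not the route the paper takes. The paper proceeds directly at the endpoint $p_{0}=2S/(S+1)$: setting $\delta=\epsilon=0$ and summing only over scales $n\le N$ in the estimate from Theorem~\ref{thm-sum}, the exponent $S-\frac{p_{0}}{2-p_{0}}$ vanishes, so the right-hand side becomes a constant times $N$. The hypothesis $|V_{n}|\ge C^{-1}e^{Sn}$ is then used to say that the number $M$ of basis vectors $\xi_{k}$ at scales $\le N$ is at least of order $e^{SN}$, whence $N\lesssim \log M$ and the partial sums of $\|[F,a]\xi_{k}\|^{p_{0}}$ grow logarithmically, which is the weak $1$-summability.

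Your approach instead keeps $p>p_{0}$ variable, balances $\delta$ against $p-p_{0}$ to produce a simple pole in $\trace\bigl(|[F,a]|^{p}\bigr)$, and then invokes the same Hardy--Littlewood Tauberian step used for Corollary~\ref{csmaintheorems}. This is a legitimate and clean alternative; in fact it shows that only the upper bound $\#\sA_{n}\le Ce^{Sn}$ is needed for the conclusion, so your observation that the lower bound on $|V_{n}|$ is not used is a genuine strengthening. The paper's proof is more elementary in that it avoids the Tauberian machinery, but it does require both hypotheses.
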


\begin{proof}
With this assumption we may verify the inequality in the proof of the theorem with $\delta=\epsilon=0$ and $p=2S/(S+1)$, but summing only over $n\leq N$.  The result is that the right side is bounded by $4L(C\DF(a))^{S/(S+1)}N$.  However the dimension of the subspace of $\Hil$ over which we have summed is bounded below by the number of points in $\cup_{n\leq N}V_{n}$, so is at least $K^{-1}e^{SN}$.  It follows that $[F,a]^{2S/(S+1)}$ is weakly $1$-summable.
\end{proof}

There are a number of ways in which to construct an \frcs\ that satisfies the assumptions of the above results.  The most standard is to consider a p.c.f. set.

\begin{corollary}
If $X$ is a p.c.f. self-similar set with regular Dirichlet form and $d_{S}<2$ is its {\em spectral dimension} in the sense of Kigami-Lapidus~\cite{KigamLapid1993CMP} then $(\sH,F)$ is a $p$-summable Fredholm module for all $p>d_{S}$ and $|[F,a]|^{d_{S}}$ is weakly $1$-summable.
\end{corollary}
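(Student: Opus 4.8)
The plan is to realize this as an application of Theorem~\ref{thm-sum} together with the corollary that follows it, after re-partitioning $X$ into cells of comparable resistance diameter and identifying the relevant Minkowski dimension. First I would fix the \pcf\ data: contractions $F_{1},\dots,F_{N}$, resistance renormalization factors $r_{1},\dots,r_{N}\in(0,1)$ of the regular harmonic structure, and for a word $w=w_{1}\cdots w_{m}$ set $r_{w}=r_{w_{1}}\cdots r_{w_{m}}$, $X_{w}=F_{w}(X)$, $V_{w}=F_{w}(V_{0})$. From Kigami's theory of regular harmonic structures one has the two-sided comparison $\mathrm{diam}_{R}X_{w}\asymp r_{w}$ uniformly in $w$ (the estimate $\lesssim r_{w}$ is automatic; regularity is used for the reverse). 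Now re-partition: for $n\ge1$ let $\Lambda_{n}$ be the maximal antichain of words with $r_{w}\le e^{-n}<r_{w^{-}}$ (with $w^{-}$ deleting the last letter), put $\Lambda_{0}=\{\emptyset\}$, and take $\{X_{w}:w\in\Lambda_{n}\}$ as the $n$-cells of a new \frcs\ with vertex sets $\{V_{w}\}$. One checks that the conditions of Definition~\ref{def-frs} and Assumption~\ref{a-1} are inherited, that $r_{w}\asymp e^{-n}$ for $w\in\Lambda_{n}$ so each new $n$-cell has $R$-diameter between $C^{-1}e^{-n}$ and $Ce^{-n}$, and that passing from $\Lambda_{n}$ to $\Lambda_{n+1}$ inside a fixed $X_{w}$ requires at most $k_{0}=\lceil(\log(1/\max_{j}r_{j}))^{-1}\rceil$ further subdivisions, so each new $n$-cell contains at most $L:=N^{k_{0}}|V_{0}|$ points of $V_{n+1}$ — the uniform bound required by Theorem~\ref{thm-sum}.

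Next I would compute $S$. Letting $d_{R}$ be the unique exponent with $\sum_{j}r_{j}^{d_{R}}=1$, a telescoping (Moran/renewal) identity gives $\sum_{w\in\Lambda_{n}}r_{w}^{d_{R}}=1$ for every $n$; combined with $r_{w}\asymp e^{-n}$ this yields $\#\Lambda_{n}\asymp e^{d_{R}n}$, and since the $n$-cells form an $R$-cover of $X$ of mesh $\asymp e^{-n}$ with uniformly bounded overlap, the upper Minkowski (indeed box-counting) dimension of $(X,R)$ equals $S=d_{R}$. It is classical (Kigami--Lapidus) that for the natural self-similar measure $\mu_{j}=r_{j}^{d_{S}/(2-d_{S})}$ one has $d_{R}=d_{S}/(2-d_{S})$, equivalently $\tfrac{2S}{S+1}=d_{S}$; also $d_{S}<2$ is precisely $S<\infty$, and $d_{S}>1$ gives $S>1$ (when $d_{S}=1$, so $S=1$, the proof of Theorem~\ref{thm-sum} still applies, giving $p$-summability whenever $p/(2-p)>1$). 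Theorem~\ref{thm-sum} therefore yields dense $p$-summability of $(\sH,F)$ for $d_{S}=\tfrac{2S}{S+1}\le p<2$.

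Finally, for the weak summability I would observe that the stronger hypotheses of the corollary immediately following Theorem~\ref{thm-sum} also hold: $\#\Lambda_{n}\asymp e^{Sn}$ from above, and $\#V_{n}\asymp\#\Lambda_{n}\asymp e^{Sn}$ by the bounded overlap of cells. That corollary then gives $|[F,a]|^{d_{S}}$ weakly $1$-summable for every $a\in\domDF$, which forces $s_{n}([F,a])=O(n^{-1/d_{S}})$, so $[F,a]$ is in fact $p$-summable for \emph{all} $p>d_{S}$; density of such $a$ in $C(X)$ comes from regularity of $\DF$. (Since conditions A1 and A2 are known for \pcf\ sets with regular harmonic structure by~\cite{CS}, both conclusions also follow directly from Corollary~\ref{csmaintheorems}.) I expect the only real obstacle to be the dimensional bookkeeping in the second and third paragraphs — in particular justifying the two-sided comparison $\mathrm{diam}_{R}X_{w}\asymp r_{w}$ and the identity $d_{R}=d_{S}/(2-d_{S})$ — together with the routine but not-to-be-skipped check that the re-partitioned family is a genuine \frcs\ satisfying Assumption~\ref{a-1}.
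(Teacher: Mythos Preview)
Your approach is essentially the same as the paper's: both deduce the result from Theorem~\ref{thm-sum} and the corollary following it, after invoking Kigami--Lapidus to identify the resistance Minkowski dimension $S$ and the relation $d_{S}=2S/(S+1)$; you simply spell out in detail the re-partitioning into $\Lambda_{n}$-cells and the cell/vertex counts that the paper asserts in a single sentence. One small slip: weak $1$-summability of $|[F,a]|^{d_{S}}$ gives only $s_{n}([F,a])=O\bigl((\tfrac{\log n}{n})^{1/d_{S}}\bigr)$, not $O(n^{-1/d_{S}})$, though this still yields $p$-summability for every $p>d_{S}$ as you claim.
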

\begin{proof}
By Kigami-Lapidus~\cite{KigamLapid1993CMP} $X$ has Hausdorff and Minkowski dimension $S$ and finite non-zero Hausdorff measure. The spectral dimension satisfies $d_{S}=\frac{2S}{S+1}$.  Thus the number of cells of resistance diameter comparable to $e^{-n}$ is bounded above and below by multiples of $e^{nS}$ and so is the number of points in $V_{n}$.
\end{proof}

As in Corollary~\ref{csestfortauw} the weak $1$-summability condition implies that any Dixmier trace $\tau_{w}(|[F,a]|^{d_{S}})$ is bounded, and in the self-similar case this is a conformal invariant.

\subsection*{Summability of $[F,a]$ below the spectral dimension in the p.c.f.~case}
An advantage of the proof of Theorem~\ref{thm-sum} over that for Corollary~\ref{csmaintheorems} is that it suggests how we might determine whether the condition $p> d_{S}=\frac{2S}{S+1}$ is necessary for $p$-summability. Specifically, we saw that $p$-summability can be derived from information about the oscillation of $a$ on $n$-cells as $n\to\infty$.  Let us now restrict ourselves to considering the case where $X$ is a p.c.f.\ self-similar set with regular harmonic structure and the (probability) measure $\mu$ on $X$ gives each $m$-cell measure $\mu^{m}$.  In this setting the oscillation of a harmonic (or $n$-harmonic) function $a$ is known~\cite{BenBassat} to be determined by the Furstenberg-Kesten theory for random matrix products~\cite{F}. We recall some basic features of this description.

If $h$ is harmonic on $X$ then it is completely determined by its values on $V_{0}$.  We fix an order on points in $V_{0}$ and a basis for the harmonic functions in which the $k^{\text{th}}$ harmonic function is $1$ at the $k^{\text{th}}$ point of $V_{0}$ and $0$ at the other points, and we identify $h$ with the vector in this basis.  To each of the maps $F_{j}$ which determine $X$ as a self-similar structure Definition~\ref{selfsimilarmmDspace} the map taking the values of $h$ on $V_{0}$ to those on $F_{j}(V_{0})$ is linear and may be written as a matrix $A_{j}$ in our basis.  Evidently the values on the boundary $F_{\alpha}(V_{0})$ of the cell corresponding to the word $\alpha=\alpha_{1}\dotsm \alpha_{m}\in\sA_{m}$ are $A_{\alpha}=\prod_{l=1}^{m} A_{\alpha_{m+1-l}}h$.  However we wish to study the oscillation, so must remove the influence of the constant functions.  Conveniently, the constants are eigenfunctions of all $A_{j}$ with eigenvalue $1$.  We factor them out and write $\tilde{A}_{j}$ for the resulting linear maps on the quotient space.  From the maximum principle it is then apparent that for a harmonic function $h$ the oscillation on the cell $X_{\alpha}$ is $\Osc_{\alpha}(h)=2\|\tilde{A}_{\alpha}h\|$, where we are using the $L^{\infty}$ norm on the finite dimensional vector space of harmonic functions (i.e. the maximum of the absolute value of the boundary values).  Denoting the matrix entries of $A$ by $A(i,j)$ we define a matrix norm by $\|A\|=\max_{i}\sum_{j}|A(i,j)|$ and conclude
\begin{equation*}
    \Osc_{\alpha}(h)
    \leq 2 \|\tilde{A}_{\alpha}\| \|h\|.
    \end{equation*}
    The same is true for all $n$-harmonic functions if we replace $\tilde{A}$ with a product  $\prod_{{l=m-n}}^{1}A_{\alpha_{m+1-l}}$ and the function $h$ with 
    $h\circ F_{\alpha_{1}\dotsm\alpha_{n}}$.

The quantity $\|\tilde{A}_{\alpha}\|$ may be understood using results about products of random matrices.  To correspond with the standard terminology in the area we view $(X,\mu)$ as a probability space.  We define an i.i.d.\ sequence of matrix-valued random variables $B_{m}$ by setting $B_{m}=A_{j}$ on those $m$-cells $X_{\alpha}$ such that the $m^{\text{th}}$ letter $\alpha_{m}=j$ (this definition is valid a.e.\ as the cells overlap at finitely many points).  We then let $S_{m}=\prod_{l=1}^{m}B_{m+-l}$ so that a.e.\ $S_{m}=\sum_{\alpha\in\sA_{m}} A_{\alpha}\mathds{1}_{\alpha}$.  This assigns to each $m$-cell the matrix product whose magnitude bounds the corresponding oscillation.  By virtue of~\eqref{boundptracebyosc} it is apparent we should study the $p^{\text{th}}$ moments of $S_{m}$.  Note that existence of a bound $L$ on the number of points of $V_{n+1}\setminus V_{n}$ in any $n$-cell is immediate in the p.c.f.s.s\ case, so that for our harmonic function $h$
\begin{align}
    \sum_{k} \|[F,h]\xi_{k}\|_{2}^{p}
    &\leq 8L \sum_{\a\in\sA} \Osc_{\a}(h)^{p} \\
    &\leq 2^{p}8L \|h\|^{p} \sum_{\a\in\sA} \|\tilde{A}_{\a}\|^{p} \\
    &= 2^{p}8L \|h\|^{p} \sum_{m=0}^{\infty} \mu^{-m} \int_{X} \|S_{m}\|^{p}\, d\mu \label{eqn-ptraceboundbySm}
    \end{align}
The convergence or divergence of this series is readily determined from the behavior of the pressure function $P(p)$ defined by
\begin{equation*}
    P(p)=\lim_{m\to\infty} \frac{1}{m}\log \int_{X} \|S_{m}\|^{p}\, d\mu = \inf_{m} \frac{1}{m}\log \int_{X} \|S_{m}\|^{p}\, d\mu.
    \end{equation*}
A great deal is known about the behavior of this function.  It is a matrix-valued analogue of the classical pressure function, and has  some analogous properties.  Note that (pointwise) existence of the limit on $(0,\infty)$ is a consequence of subadditivity, and it is convex (hence continuous) by H\"{o}lder's inequality.  To avoid the possibility of confusion caused by the different matrix norms used in different papers, we mention that these are comparable (because the dimension is finite) so do not affect $P(p)$.  In our setting the largest eigenvalue of the matrix $A_{j}$ is the energy scaling factor $r_{j}<1$ for the $j^{\text{th}}$ cell, so that $P(p)$ is decreasing. An immediate consequence of the above estimate is
\begin{theorem}\label{pressurefuncttheorem}
Suppose $q$ is such that $P(q)=\log\mu$.  If $h$ is harmonic or piecewise harmonic then $[F,h]$ is $p$-summable for all $q<p<2$.
\end{theorem}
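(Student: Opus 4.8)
The plan is to combine the oscillation bound~\eqref{eqn-ptraceboundbySm} with the asymptotics of the pressure function $P$. As in the proof of Theorem~\ref{thm-sum}, the starting point is that for $p<2$ the $p$-sum of the singular values of any bounded operator $T$ on $\sH$ is dominated by $\sum_k\|T\xi_k\|_2^p$ for an orthonormal basis $\{\xi_k\}$. Taking $T=[F,h]$ and the basis supplied by Theorem~\ref{thm-fulldecomp}, organised so as to run through orthonormal bases of each $\sJ_\a=\sJ_\a^D\oplus\sJ_\a^N$, one uses (exactly as in Lemma~\ref{cor-sH} and the proof of Theorem~\ref{thm-sum}) that on $\sJ_\a$ the commutator equals $[F,(h-h_\a)\mathds 1_\a]$ with $h_\a$ the average of $h$ on $X_\a$, has operator norm at most $4\Osc_\a(h)$, and that $\dim\sJ_\a\le 2L$ where $L$ bounds $|V_{n+1}\cap X_\a|$ (finite in the p.c.f.s.s.\ case). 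This is precisely the chain of inequalities leading to~\eqref{eqn-ptraceboundbySm}, so $p$-summability of $[F,h]$ follows once we show that $\sum_{m=0}^\infty\mu^{-m}\int_X\|S_m\|^p\,d\mu$ converges.

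The second step reads off this convergence from $P$. Since $P(p)=\lim_{m\to\infty}\frac1m\log\int_X\|S_m\|^p\,d\mu$, for any $\epsilon>0$ all but finitely many terms of the series are bounded by $e^{m(P(p)+\epsilon-\log\mu)}$, so the series converges whenever $P(p)<\log\mu$. As $P$ is decreasing with $P(q)=\log\mu$, we have $P(p)<\log\mu$ for every $p>q$ (strictness holds for instance because $P$ is convex with $P(0)=0>\log\mu$, so $P$ is strictly decreasing past $q$); combined with the restriction $p<2$ from Step~1 this gives $p$-summability of $[F,h]$ for all $q<p<2$. The case $p\in(d_S,2]$ is of course already contained in Corollary~\ref{csmaintheorems}, so the new content is the range $q<p\le d_S$ (nonempty exactly when $P(d_S)<\log\mu$, i.e.\ $q<d_S$). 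For a piecewise (i.e.\ $n$-)harmonic $h$ one argues as in the text preceding the theorem: on an $m$-cell $X_\a$ with $m\ge n$ the oscillation of $h$ is at most $2\|\tilde A_{\a''}\|\,\|h\circ F_{\alpha_1\cdots\alpha_n}\|$ with $\a''$ the word obtained by deleting the first $n$ letters of $\a$, so summing over $\sA_m$ grouped by the first $n$ letters reproduces the same tail series up to the finite constant $N^n\mu^{-n}\max_{w\in\sA_n}\|h\circ F_w\|^p$, while the finitely many cells with $m<n$ contribute a finite amount since $h$ is bounded; thus the criterion $P(p)<\log\mu$ is unchanged.

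I expect the only genuine work to be the bookkeeping in Step~1: verifying that the basis of Theorem~\ref{thm-fulldecomp} can be arranged cell by cell so that the localization of $[F,h]$ on $\sJ_\a$ is literally $[F,(h-h_\a)\mathds 1_\a]$ with norm controlled by $\Osc_\a(h)$, and that $L$ bounds $\dim\sJ_\a$ uniformly in $n$. This is the same mechanism already used for Theorem~\ref{thm-sum}, so it is routine here; everything past it is the elementary convergence estimate above for a series of geometric type whose exponential rate is governed by $P(p)-\log\mu$.
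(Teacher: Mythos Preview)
Your proposal is correct and follows exactly the paper's approach: the paper states the theorem as ``an immediate consequence of the above estimate'' \eqref{eqn-ptraceboundbySm}, and your Step~2 spells out precisely this immediacy---the series $\sum_m \mu^{-m}\int_X\|S_m\|^p\,d\mu$ is geometrically controlled by $P(p)-\log\mu$, which is negative for $p>q$ since $P$ is decreasing. Your Step~1 re-derives \eqref{eqn-ptraceboundbySm}, which is already established in the text, so that part is redundant; your treatment of the piecewise harmonic case matches the paper's remark preceding the theorem. One small comment: your parenthetical justification of \emph{strict} decrease of $P$ past $q$ via convexity and $P(0)=0>\log\mu$ is not quite complete (convexity plus non-increase does not rule out a flat piece to the right of $q$), but the paper itself simply asserts $P$ is decreasing from the fact that the harmonic matrices have spectral radius $r_j<1$, so you are at the same level of rigor as the source.
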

\begin{remark}
Weak $1$-summability of $|[F,h]|^{q}$ is not accessible using this bound.  Indeed, we have $\int_{X} \|S_{m}\|^{p}\, d\mu\geq e^{mP(q)}=\mu^{m}$, so that the partial sums sum on the right of~\eqref{eqn-ptraceboundbySm} satisfy
\begin{equation*}
      \sum_{m=0}^{M} \mu^{-m} \int_{X} \|S_{m}\|^{q}\, d\mu
      \geq M
    \end{equation*}
and hence cannot be used to show that $\sum_{k\leq M} \|[F,h]\xi_{k}\|_{2}^{q}$ is bounded by a multiple of $\log M$.  We do not know whether $|[F,h]|^{q}$ is weakly $1$-summable, or whether there is a Dixmier trace for $|[F,h]|^{q}$.
\end{remark}

It is not hard to determine that $q\leq d_{S}$ (the spectral dimension), however if we have some information about the harmonic extension matrices $A_{j}$ then we can show this inequality is strict.
\begin{definition}
The semigroup  $\{A_{\a}\}_{\a\in\sA}$ is {\em contracting} if it contains a sequence $A_{\a_{k}}$ such that $\|A_{\a_{k}}\|^{-1}A_{\a_{k}}$ converges to a rank $1$ matrix.  It is {\em irreducible} if there is no proper non-trivial subspace preserved by all $A_{j}$ (and hence by the semigroup).  It is {\em strongly irreducible} if there is no finite collection of proper non-trivial subspaces whose union is preserved. It is {\em proximal} if the elements have distinct (non-repeated) singular values. For more about these definitions, see~\cite[Chapter III]{Bougerol} and~\cite[page~ 197]{GuiLePage}.
\end{definition}

\begin{theorem}[Theorem 8.8 of Guivarc'h and Le Page~\protect{\cite{GuiLePage}}]\label{GLPtheorem}
If the harmonic matrices are invertible, and the semigroup they generate is strongly irreducible, proximal and contracting, then $P$ is real analytic on $(0,q)$, and the right derivative at $0$ is the Lyapunov exponent $\lim_{m} m^{-1} \int_{X} \log \|S_{m}\|\, d\mu$.
\end{theorem}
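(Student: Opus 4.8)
The statement is Theorem~8.8 of~\cite{GuiLePage}, so strictly speaking nothing is left to prove; what I would do is recall why our matrices fall under its hypotheses and sketch the mechanism. Write $\varrho=\sum_{j}\mu_{j}\delta_{A_{j}}$ for the common law of the i.i.d.\ matrices $B_{m}$ acting on the quotient $\tilde V$ of the harmonic functions by the constants, so that $S_{m}=B_{m}\cdots B_{1}$ has law the $m$-fold convolution $\varrho^{*m}$ and $\int_{X}\|S_{m}\|^{p}\,d\mu=\int\|g\|^{p}\,d\varrho^{*m}(g)$. The plan is to realise $e^{P(p)}$ as the leading eigenvalue of a transfer operator on the projective space $\mathbb{P}(\tilde V)$: for $p\geq0$ set, on H\"older-continuous functions $f$ on $\mathbb{P}(\tilde V)$,
\[
    (T_{p}f)(\bar v)=\int \frac{\|gv\|^{p}}{\|v\|^{p}}\, f\bigl(\overline{gv}\bigr)\, d\varrho(g),
\]
which is well defined because invertibility of the $A_{j}$ makes the cocycle $\|gv\|/\|v\|$ bounded above and below on $\mathbb{P}(\tilde V)$. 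Iterating gives $(T_{p}^{m}\mathds{1})(\bar v)=\int(\|gv\|/\|v\|)^{p}\,d\varrho^{*m}(g)$, and under our hypotheses this is comparable, uniformly in $\bar v$ and up to subexponential factors, to $\int\|g\|^{p}\,d\varrho^{*m}(g)$, so that $\frac1m\log\|T_{p}^{m}\|\to P(p)$ and the spectral radius of $T_{p}$ is $e^{P(p)}$.

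Real analyticity of $P$ on $(0,q)$ follows by analytic perturbation theory once $T_{p}$ is known to have a spectral gap. The strong irreducibility, proximality and contraction hypotheses are precisely the conditions (see~\cite[Chapter~III]{Bougerol} and~\cite{GuiLePage}) under which $T_{p}$ is quasi-compact on an appropriate H\"older space, with $e^{P(p)}$ an isolated simple eigenvalue and the remainder of the spectrum in a strictly smaller disc; establishing this quasi-compactness and simplicity is the technical core of~\cite{GuiLePage} and is the step I expect to be the main obstacle to a self-contained proof. Granting it, since the matrices are fixed and invertible the weight $(\|gv\|/\|v\|)^{p}$ depends entirely (hence analytically) on $p$, so $p\mapsto T_{p}$ is an analytic family of bounded operators; by Kato's perturbation theory the isolated simple eigenvalue $\lambda(p):=e^{P(p)}$ and its spectral projection vary real-analytically, whence $P(p)=\log\lambda(p)$ is real analytic on $(0,q)$. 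One stops below $q$ because the relation $P(q)=\log\mu$ marks the transition governing summability, beyond which the perturbation analysis need not persist.

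For the right derivative at $0$: the operator $T_{0}$ is the Markov operator $f\mapsto\int f(\overline{gv})\,d\varrho(g)$ on $\mathbb{P}(\tilde V)$, with $\lambda(0)=1$, constant leading eigenfunction $f_{0}=\mathds{1}$, and unique left eigenvector the $\varrho$-stationary probability $\pi$ on $\mathbb{P}(\tilde V)$ (uniqueness by strong irreducibility). Normalising the eigenfunction $f_{p}$ of $T_{p}$ by $\pi(f_{p})=1$, differentiating $T_{p}f_{p}=\lambda(p)f_{p}$ at $p=0$ and pairing against $\pi$ eliminates the $\dot f_{0}$ terms (because $\pi T_{0}=\pi$ and $\pi(f_{0})=1$), leaving
\[
    \lambda'(0)=\int_{\mathbb{P}(\tilde V)}\!\!\int \log\frac{\|gv\|}{\|v\|}\, d\varrho(g)\, d\pi(\bar v).
\]
By the Furstenberg formula~\cite[Chapter~III]{Bougerol} the right-hand side equals the top Lyapunov exponent $\gamma=\lim_{m}m^{-1}\int\log\|g\|\,d\varrho^{*m}(g)=\lim_{m}m^{-1}\int_{X}\log\|S_{m}\|\,d\mu$, and since $\lambda(0)=1$ we conclude $P'(0^{+})=\lambda'(0)/\lambda(0)=\gamma$, as asserted.
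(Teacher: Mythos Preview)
Your opening observation is exactly right: the paper does not prove this theorem at all, it merely cites it from~\cite{GuiLePage} and adds a remark that the finiteness hypotheses there are trivially satisfied in the present setting because one deals with finitely many bounded invertible matrices. So there is nothing to compare against---the paper's ``proof'' is the citation itself.

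Your sketch of the transfer-operator mechanism (quasi-compactness giving a spectral gap, analytic perturbation of the isolated simple eigenvalue, and the Furstenberg formula identifying $\lambda'(0)$ with the top Lyapunov exponent) is a fair outline of how the result is established in the original reference, and you are honest about where the real work lies (the quasi-compactness/simplicity step). One minor point: the interval $(0,q)$ in the statement is not really where ``the perturbation analysis need not persist''; the analyticity in~\cite{GuiLePage} is established on an interval determined by the transfer-operator theory, and the paper simply restricts attention to $(0,q)$ because that is the range relevant to the summability question (Theorem~\ref{pressurefuncttheorem}). But since the paper offers no argument of its own here, your proposal already exceeds what is required.
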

Note that the conditions of the theorem in~\cite{GuiLePage} include finiteness of two integrals; for us these conditions are trivially satisfied because we have finitely many bounded invertible matrices.

For many fractals with regular harmonic structure it follows from this theorem that the critical exponent for summability of $[F,h]$ is strictly less than the spectral dimension.  This is most evident in the case where the resistance scaling has the same value $r$ for all cells (such as occurs for the symmetric harmonic structure on Lindstr\"{o}m's nested fractals~\cite{Lindstrom}).
\begin{theorem}\label{thm-criticalexponentbelowds}
Let $X$ be a p.c.f.s.s.\ fractal with regular harmonic structure having all resistance scalings equal $r$ and all measure scalings equal $\mu$. Suppose the harmonic extension matrices satisfy the conditions of Theorem~\ref{GLPtheorem} and there is a harmonic function such that $\|S_{m}h\|$ is non-constant. Then there is $q<d_{S}$ (the spectral dimension) such that if $h$ is a harmonic or piecewise harmonic function then $[F,h]$ is $p$-summable for all $p>q$.
\end{theorem}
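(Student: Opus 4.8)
The plan is to translate $p$\nobreakdash-summability of $[F,h]$ into the location of the exponent $q$ with $P(q)=\log\mu$, and then to rule out the degenerate value $q=d_{S}$ by a strict‑convexity argument. By Theorem~\ref{pressurefuncttheorem} it suffices to produce $q<d_{S}$ with $P(q)=\log\mu$: then $[F,h]$ is $p$\nobreakdash-summable for $q<p<2$, and since it is Hilbert--Schmidt (i.e.\ $2$\nobreakdash-summable) by Corollary~\ref{csmaintheorems}, it is $p$\nobreakdash-summable for all $p\geq 2$ as well, hence for all $p>q$. Recall the elementary properties of $P$: it is convex and continuous on $(0,\infty)$, it is decreasing because each $A_{j}$ has largest eigenvalue $r<1$, and $P(0)=0$ since $\int_{X}\mathds{1}\,d\mu=1$. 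We first record the (known) bound $q\leq d_{S}$; it rests on $P(2)\leq\log(\mu r)$. Indeed $\int_{X}\|S_{m}\|^{2}\,d\mu=\mu^{m}\sum_{\alpha\in\sA_{m}}\|\tilde A_{\alpha}\|^{2}$, and for an $\E$\nobreakdash-orthonormal basis $\{h_{i}\}$ of the harmonic functions modulo constants one has $\|\tilde A_{\alpha}\|^{2}\lesssim\sum_{i}\|\tilde A_{\alpha}h_{i}\|^{2}=\tfrac14\sum_{i}\Osc_{\alpha}(h_{i})^{2}$; combining this with $\Osc_{\alpha}(h)^{2}\leq\E_{\alpha}(h)\,\mathrm{diam}_{R}(X_{\alpha})$ (from (RF4) of Definition~\ref{def-resistform}) and the fact that for a regular harmonic structure $\mathrm{diam}_{R}(X_{\alpha})\leq Cr^{m}$ when $\alpha\in\sA_{m}$, one gets $\sum_{\alpha\in\sA_{m}}\|\tilde A_{\alpha}\|^{2}\lesssim r^{m}\sum_{i}\E(h_{i})\lesssim r^{m}$, whence $P(2)\leq\log(\mu r)$. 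Since $d_{S}$ is characterised by $\mu=r^{d_{S}/(2-d_{S})}$, equivalently $\tfrac{d_{S}}{2}\log(\mu r)=\log\mu$, convexity together with $P(0)=0$ gives $P(p)\leq\tfrac p2 P(2)\leq \tfrac p2\log(\mu r)=:\ell(p)$ for $0\leq p\leq 2$; in particular $P(d_{S})\leq\ell(d_{S})=\log\mu$, so $q\leq d_{S}$.

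\textbf{The rigidity step.} Suppose for contradiction that $q=d_{S}$. Then $P(d_{S})=\log\mu=\ell(d_{S})$, so the convex function $P-\ell$ is $\leq 0$ on $[0,2]$ and vanishes at the interior point $d_{S}\in(0,2)$; a convex function attaining its maximum at an interior point is constant, so $P\equiv\ell$ on $[0,2]$, and in particular $P$ is affine on $(0,q)$. This is precisely what the hypotheses forbid: by Theorem~\ref{GLPtheorem} the invertibility, strong irreducibility, proximality and contraction of the semigroup generated by the harmonic extension matrices force $P$ to be real analytic on $(0,q)$, with $P'(0^{+})$ equal to the Lyapunov exponent $\gamma=\lim_{m}m^{-1}\int_{X}\log\|S_{m}\|\,d\mu$; and the standard theory of products of random matrices (see \cite[Ch.~III]{Bougerol}, \cite{GuiLePage}) shows that under these conditions $P$ is \emph{strictly} convex on $(0,q)$ except in degenerate cases in which $\|S_{m}v\|$ is, up to a bounded multiplicative factor, independent of the cell for every fixed vector $v$ — a degeneracy excluded by the assumption that some harmonic $h$ has $\|S_{m}h\|$ non‑constant. (Heuristically the point is that if $P$ were affine it would equal $\gamma p$, so $P(2)=2\gamma$; but Jensen's inequality applied to the $L^{2}$ estimate above gives $2\gamma=\lim_{m}m^{-1}\int_{X}\log\|S_{m}\|^{2}\,d\mu\leq P(2)$, with strict inequality as soon as $\log\|S_{m}\|$ has variance of order $m$, which is what non‑constancy of $\|S_{m}h\|$ supplies.) This contradiction shows $q<d_{S}$, and the conclusion follows from Theorem~\ref{pressurefuncttheorem}.

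\textbf{Main obstacle.} The routine part is the convexity bookkeeping: the bound $P\leq\ell$ on $[0,2]$ — essentially the Hilbert--Schmidt estimate already used in the $d_{S}$\nobreakdash-summability results — pins down $q\leq d_{S}$, and the rigidity lemma for a convex function touching an affine minorant at an interior point then shows that equality $q=d_{S}$ can only occur in the affine (degenerate) case. The real content, and the step I expect to be hardest to make fully rigorous, is the last one: converting the purely geometric non‑degeneracy condition ``$\|S_{m}h\|$ is non‑constant for some harmonic $h$'' into the analytic statement that the matrix pressure $P$ is not affine on $(0,q)$. This is where Theorem~\ref{GLPtheorem} (real analyticity, and the identification of $P'(0^{+})$ with the Lyapunov exponent) and the deeper results on strict convexity of the pressure for strongly irreducible, proximal, contracting matrix semigroups are indispensable.
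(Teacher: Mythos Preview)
Your overall framework matches the paper's: reduce to finding $q$ with $P(q)=\log\mu$ and showing $q<d_{S}$ via strict convexity of $P$. The gap is precisely at the step you flag yourself. You defer strict convexity to ``standard theory'' and a heuristic about $\log\|S_{m}\|$ having ``variance of order $m$'', but you never actually connect the stated hypothesis---that $\|S_{m}h\|$ is non-constant for \emph{some} $m$ and \emph{some} harmonic $h$---to that conclusion. No precise citation is given, and the characterization of the degenerate case (``$\|S_{m}v\|$ independent of the cell up to bounded factor for every $v$'') is not obviously equivalent to the negation of the hypothesis.

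The paper closes this gap with a short, self-contained argument that you are missing. The key observation is that self-similarity of the Dirichlet form gives the \emph{exact} algebraic identity $\sum_{j}\tilde A_{j}^{*}\tilde A_{j}=rI$, so that $\int_{X}S_{m}^{*}S_{m}\,d\mu=(r\mu)^{m}I$ and hence $P(2)=\log(r\mu)$ (equality, not merely the inequality you derive from resistance-diameter bounds). Then Jensen's inequality applied directly to $\|S_{m}h\|^{2}$ for the specific $h$ in the hypothesis gives
\[
\frac{1}{m}\int_{X}\log\|S_{m}h\|^{2}\,d\mu<\log\int_{X}\langle h,S_{m}^{*}S_{m}h\rangle\,d\mu=\log(r\mu),
\]
the strictness coming exactly from non-constancy of $\|S_{m}h\|$. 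Since the Lyapunov exponent $P'(0)$ equals $\inf_{m}m^{-1}\int\log\|S_{m}h\|\,d\mu$, this yields $2P'(0)<P(2)$. Combined with analyticity from Theorem~\ref{GLPtheorem} and $P(0)=0$, this forces $P$ to be strictly convex. The contradiction with $q=d_{S}$ then follows because that equality would mean $P(d_{S})/d_{S}=P(2)/2$, impossible for a strictly convex function vanishing at $0$. So the ``real content'' you anticipated needing deep random-matrix results for is in fact a one-line Jensen computation once you have the identity $\sum_{j}\tilde A_{j}^{*}\tilde A_{j}=rI$; your oscillation estimate for $P(2)$ is correct but too weak to run this argument.
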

\begin{proof}
We adapt an argument from~\cite{BenBassat}.  Using $S_{m}^{\ast}$ to denote the adjoint we observe in this situation that self-similarity of the Dirichlet form says exactly $\DF(h)=r^{-1}\sum_{j} h^{\ast}\tilde{A}_{j}^{\ast}\tilde{A}_{j}h$ for any harmonic $h$.  Thus $\sum_{j} \tilde{A}_{j}^{\ast}\tilde{A}_{j}=rI$ and $\int_{X}\sum_{j} S_{m}^{\ast}S_{m}\, d\mu= (r\mu)^{m} I$ for all $m$.  Hence $P(2)=\log r\mu$.

Now observe that if $S_{m}h$ is non-constant then then strict inequality holds in Jensen's inequality as follows:
\begin{equation*}
    \frac{1}{m} \int_{X} \log \|S_{m}h\|^{2}\, d\mu
    < \log \int_{X} \langle h,S_{m}^{\ast}S_{m}h\rangle \, d\mu
    =\log r\mu.
    \end{equation*}
The Lyapunov exponent $P'(0)=\inf_{m} m^{-1} \int_{X} \log \|S_{m}h\|^{2}\, d\mu$ by subadditivity, so this computation shows $2P'(0)<P(2)$.  From Theorem~\ref{GLPtheorem} we know $P(p)$ is analytic; since $P(0)=0$ and $2P'(0)<P(2)$ we conclude that $P$ is strictly convex.

As in Theorem~\ref{pressurefuncttheorem} let $q$ be such that $P(q)=\log\mu$, so $P(p)<\log\mu$ when $p>q$ and hence any $[F,h]$ is $p$-summable for $p>q$.  We know $\mu=r\mu^{d_{S}/2}$, so $P(q) = \frac{d_{S}}{2}\log r\mu=\frac{d_{S}}{2} P(2)$.  If it were the case that $q=d_{S}$ then $\frac{P(d_{s})}{d_{S}}=\frac{P(2)}{2}$ in contradiction to strict convexity.  Thus $q<d_{S}$ and any $[F,h]$ is $p$-summable for $p>q$.
\end{proof}

Of course the piecewise harmonic functions are not an algebra, but the algebra they generate in $\domDF$ has the same tracial summability properties as in the preceding theorem.  We can see this using the fact that $\Osc_{\a}(gh)\leq \|g\|_{\infty}\Osc_{\a}(h)+ \|h\|_{\infty}\Osc_{\a}(g)$; $p$-summability of the product then follows from $p$-summability of the individual terms and~\eqref{boundptracebyosc}.  This implies the following, which improves upon Theorem~3.8 of~\cite{CS}.
\begin{corollary}
Under the conditions of Theorem~\ref{thm-criticalexponentbelowds}, the Fredholm module $(\Hil,F)$ is densely $p$-summable for all $p>q$.
\end{corollary}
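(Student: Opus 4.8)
The plan is to exhibit a uniformly dense subalgebra of $C(X)$ on which the commutator with $F$ is $p$-summable for every $p>q$; this is exactly dense $p$-summability of $(\Hil,F)$.

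Fix $p\in(q,2)$ and set $\mathcal{S}_p=\{a\in\domDF:\sum_{\a\in\sA}\Osc_\a(a)^p<\infty\}$. Two facts proved above do most of the work. First, the proof of Theorem~\ref{thm-criticalexponentbelowds} shows, via the estimate~\eqref{eqn-ptraceboundbySm} and the inequality $P(p)<\log\mu$ valid for $p>q$, that every harmonic, and hence every piecewise harmonic, function belongs to $\mathcal{S}_p$. Second, the bound~\eqref{boundptracebyosc}, which holds for an arbitrary element of $\domDF$ (the only standing hypothesis, a bound $L$ on the number of points of $V_{n+1}$ in any $n$-cell, being automatic for p.c.f.s.s.\ sets), gives that $a\in\mathcal{S}_p$ implies $[F,a]$ is $p$-summable. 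It therefore suffices to show that $\mathcal{S}_p$ is an algebra containing a uniformly dense subset of $C(X)$.

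The set $\mathcal{S}_p$ is a linear subspace because $\Osc_\a(a+b)^p\leq C_p\bigl(\Osc_\a(a)^p+\Osc_\a(b)^p\bigr)$ with $C_p=\max\{1,2^{p-1}\}$, and it is closed under products: raising the inequality $\Osc_\a(ab)\leq\|a\|_\infty\Osc_\a(b)+\|b\|_\infty\Osc_\a(a)$ noted just above to the $p$-th power and summing over $\a$ yields
\begin{equation*}
    \sum_{\a\in\sA}\Osc_\a(ab)^p
    \leq C_p\Bigl(\|a\|_\infty^p\sum_{\a\in\sA}\Osc_\a(b)^p+\|b\|_\infty^p\sum_{\a\in\sA}\Osc_\a(a)^p\Bigr)<\infty,
\end{equation*}
while $ab\in\domDF$ because finite-energy functions are bounded on the compact space $X$ and $\domDF\cap L^{\infty}(\mu)$ is an algebra. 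Hence $\mathcal{S}_p$ contains the subalgebra of $\domDF$ generated by the piecewise harmonic functions, in particular the piecewise harmonic functions themselves. These are uniformly dense in $C(X)$: the $n$-harmonic interpolant $u_n$ of $u\big|_{V_n}$ converges to $u\in\domDF$ in energy (Proposition~\ref{Dirformdeterminedbytraces}; cf.\ the proof of Theorem~\ref{thm-fr}(1)), and since $u-u_n$ vanishes on $V_n$ while $\DF(u-u_n)\to0$, the resistance estimate $|(u-u_n)(x)|^2\leq R(x,y)\DF(u-u_n)$ for $y\in V_n$ together with boundedness of the resistance metric on the compact $X$ upgrades this to uniform convergence; since $\domDF$ is uniformly dense in $C(X)$ by regularity of $\DF$, so is the set of piecewise harmonic functions.

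Putting these together, $\mathcal{S}_p\subseteq\{a\in C(X):[F,a]\text{ is }p\text{-summable}\}$ and $\mathcal{S}_p$ is uniformly dense in $C(X)$ for every $p>q$, which is the claim. There is no genuine obstacle, since the argument merely assembles Theorem~\ref{thm-criticalexponentbelowds}, the oscillation inequalities, and~\eqref{boundptracebyosc}; the points worth a moment's care are the value of the constant $C_p$ in the power inequality when $p<1$ (which genuinely occurs if the Minkowski dimension $S$ is close to $1$) and the fact that~\eqref{boundptracebyosc} applies to every element of $\domDF$, not only to harmonic ones.
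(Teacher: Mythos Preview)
Your argument is correct and follows essentially the same route as the paper: both use the oscillation inequality $\Osc_{\a}(gh)\leq\|g\|_{\infty}\Osc_{\a}(h)+\|h\|_{\infty}\Osc_{\a}(g)$ to pass from piecewise harmonic functions to the algebra they generate, and then invoke~\eqref{boundptracebyosc} to obtain $p$-summability. You have supplied more detail than the paper (explicitly defining $\mathcal{S}_p$, verifying it is an algebra, and spelling out the uniform density of piecewise harmonic functions), but the underlying strategy is identical.
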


The situation in Theorem~\ref{thm-criticalexponentbelowds} should be contrasted with that for the unit interval, which is the only Euclidean space having p.c.f.s.s.\ structure.  In the case of the interval $\mu=r=\frac{1}{2}$ and harmonic means linear, so that $\|S_{m}h\|$ is a constant multiple of $2^{-m}$ for each $m$.  Then the pressure function is linear and the critical exponent for summability of the trace of $|[F,h]|^{p}$ is $P'(0)=d_{S}=1$.  We believe that this is essentially the only circumstance under which this critical exponent coincides with $d_{S}$. Similarly, one can naturally conjecture that essentially the only self-similar cases when $d_{S}$ exists and is equal to the Hausdorff dimension are that of   Euclidean cubes.

\appendix
\section{Calculation of projections}
Alternative proofs of some of our results may be given using the fact that the projection operators $P$ and $P^{\perp}$ may be computed using the resistances of electrical network theory. This also offers a way of computing these operators explicitly.

Let $p\in V_{n}$ and $h$ be the $n$-harmonic function with $h(p)=1$ and $h(q)=0$ on $V_{n}\setminus\{p\}$. If $X_{\a}$ is an $n$-cell containing $p$ then  $h\otimes\mathds{1}_{\alpha}\in \Hil_{n}$ is discontinuous only at $p$. Now define a graph in which $p$ is replaced by two points $p_{\text{in}}$ and $p_{\text{out}}$, with $p_{\text{in}}$ connected to points of $V_{\a}$ and $p_{\text{out}}$ connected to the other points of $V_{n}$.  More precisely, define a set of vertices $V'(p,\a)=\{p_{\text{in}},p_{\text{out}}\}\cup V_{n}\setminus\{p\}$.  The restriction of $\DF$ to $V_{n}$ gives us resistances $r_{xy}$ for all pairs $x,y\in V_{n}$.  Now on $V'(p,\a)$ let $r'_{xy}=r_{xy}$ if $x,y\neq p$, let $r_{xp_{\text{in}}}=r_{xp}$ if $x\in V_{\a}$ and zero if $x\not\in V_{\a}$ and let $r_{xp_{\text{out}}}=r_{xp}$ if $x\not\in V_{\a}$ and zero if $x\in V_{\a}$.  Then there is a unique harmonic function $\eta_{p,\a}$ on $V'(p,\a)$ that is equal $1$ at $p_{\text{in}}$ and $0$ at $p_{\text{out}}$.  It has Neumann conditions at $V_{0}$. Observe that $\eta_{p,\a}$ may be interpreted as an element of $\Hil_{n}$ by Lemma~\ref{lem-sHnisomorphtosumoverXalpha}.  Explicitly it is
\begin{equation*}
    \Bigl.\eta_{p,\a}\Bigr|_{X_{\a}}\otimes\mathds{1}_{\a}+\Bigl.\eta_{p,\a}\Bigr|_{X\setminus X_{\a}}\otimes(\mathds{1}_{X}-\mathds{1}_{\a}).
    \end{equation*}
This is continuous everywhere except at $p$, it is harmonic away from $p$ so its normal derivatives sum to zero at all other points of $V_{n}$, including those in $V_{0}$ because it has Neumann boundary conditions.  It is then clear that $h\otimes\mathds{1}_{\alpha}-\eta_{p,\a}$ is $n$-harmonic and continuous, so is in $P\Hil_{n}$.  Hence $\eta_{p,\a}=P^{\perp}h\otimes\mathds{1}_{\alpha}$.  Now let $u=\sum_{\a}h_{\a}\otimes\mathds{1}_{\a}$ be any element of $\Hil_n$. The above gives
\begin{equation*}
    P^{\perp}u=\sum_{\a}\sum_{p\in V_{\a}} h_{\a}(p)\eta_{p,\a}
    \end{equation*}
and of course $Pu=u-P^{\perp}u$.

\def\cprime{$'$}

\end{document}